\newcommand{\abspic}[2] {
  \tikzset{n1/.style={thick,draw, inner sep=0pt, minimum size=5pt,fill=black,circle}}
  \tikzset{n2/.style={thick,draw, inner sep=0pt, minimum size=5pt,fill=white,circle}}
  \node[n1] (x) at (1,0) {};
  \node[n2] (y) at (2*#2+1,1) {};
  \foreach \i in {1,...,#1} {
    \node[n2] (a\i) at (2*\i-1,1) {};
    \node[n1] (b\i) at (2*\i,1) {};
  }
  \foreach \i in {1,...,#2} {
    \node[n2] (c\i) at (2*\i,0) {};
    \node[n1] (d\i) at (2*\i+1,0) {};
  }
}
\newcommand{\cH}{\ensuremath{\mathcal H}}
\newcommand{\cI}{\ensuremath{\mathcal I}}
\newcommand{\cJ}{\ensuremath{\mathcal J}}
\newcommand{\cP}{\ensuremath{\mathcal P}}
\newcommand{\eps}{\varepsilon}
\renewcommand{\phi}{\varphi}
\renewcommand{\rho}{\varrho}
\DeclareMathOperator*{\N}{\mathbb{N}}
\DeclareMathOperator*{\R}{\mathbb{R}}
\DeclarePairedDelimiter\ceil{\lceil}{\rceil}
\DeclarePairedDelimiter\floor{\lfloor}{\rfloor}
\let\setminus=\smallsetminus
\newcommand{\osref}[2]{%
  \setlength\abovedisplayskip{5pt plus 2pt minus 2pt}
  \setlength\abovedisplayshortskip{5pt plus 2pt minus 2pt}
  \ensuremath{\overset{\text{#1}}{#2}}
}
\newcommand{\Gnp}{G_{n, p}}
\declaretheorem[parent=section]{theorem}
\declaretheorem[sibling=theorem]{lemma}
\declaretheorem[sibling=theorem]{claim}
\declaretheorem[sibling=theorem,style=definition]{definition}
\setlist{itemsep=0.1em, topsep=0.1em, parsep=0.1em, partopsep=0.1em}
\newcounter{sarrow}
\colorlet{RoyalRed}{red!70!black}
\definecolor{RoyalBlue}{rgb}{0.25, 0.41, 0.88}
\definecolor{RoyalAzure}{rgb}{0.0, 0.22, 0.66}
\title{Covering cycles in sparse graphs}
\author{Frank Mousset}
\address{Institute of Mathematical Sciences, Tel Aviv University, Tel Aviv,
Israel}
\email{moussetf@inf.ethz.ch}
\author{Nemanja \v{S}kori\'{c}}
\address{Institute of Theoretical Computer Science, ETH Z\"{u}rich, 8092
Z\"{u}rich, Switzerland}
\email{nskoric@inf.ethz.ch}
\author{Milo\v{s} Truji\'{c}}
\address{Institute of Theoretical Computer Science, ETH Z\"{u}rich, 8092
Z\"{u}rich, Switzerland}
\email{mtrujic@inf.ethz.ch}
\date{\today}
\thanks{Research supported by Israel Science Foundation grants 1028/16 and
1147/14, and ERC Starting Grant 633509 (FM), and by grant no.\ 200021 169242
of the Swiss National Science Foundation (MT)}
\begin{document}

\begin{abstract}
  Let $k \geq 2$ be an integer. Kouider and Lonc proved that the vertex set of
  every graph $G$ with $n \geq n_0(k)$ vertices and minimum degree at least
  $n/k$ can be covered by $k-1$ cycles. Our main result states that for every
  $\alpha > 0$ and $p = p(n) \in (0,1]$, the same conclusion holds for graphs
  $G$ with minimum degree $(1/k+\alpha)np$ that are sparse in the sense that
  \[
    e_G(X,Y) \leq p|X||Y| + o(np\sqrt{|X||Y|}/\log^3 n) \qquad \forall
    X,Y\subseteq V(G).
  \]
  In particular, this allows us to determine the local resilience of random and
  pseudorandom graphs with respect to having a vertex cover by a fixed number of
  cycles. The proof uses a version of the absorbing method in sparse expander
  graphs.
\end{abstract}

\maketitle

\section{Introduction}

A classical result of Dirac states that every graph with $n \geq 3$ vertices and
minimum degree at least $n/2$ contains a Hamilton cycle, that is, a cycle
passing through all vertices of the graph~\cite{dirac1952some}. There exist a
vast number of extensions of this theorem, most of which state that every graph
satisfying a certain minimum degree condition must have some `global' structure.

For example, Hajnal and Szemer\'{e}di~\cite{hajnal1970proof} proved that, for
every $k \geq 2$, the vertex set of every graph with $n$ vertices and minimum
degree at least $(k-1)n/k$ can be covered by vertex-disjoint copies of $K_k$,
provided that $k$ divides $n$. P\'{o}sa~\cite{erdos1964problem} and
Seymour~\cite{seymour1973problem} conjectured that, more generally, every graph
with $n$ vertices and minimum degree at least $(k-1)n/k$ contains the $(k-1)$-st
power of a Hamilton cycle, that is, a Hamilton cycle in which every pair of
vertices at distance at most $k-1$ is connected by an edge (the case $k=2$ is
Dirac's theorem). This conjecture was first proved for large $n$ by Koml\'{o}s,
S\'{a}rk\"{o}zy, and Szemer\'{e}di~\cite{komlos1998proof}, using the regularity
method, and later for smaller values of $n$ by Levitt, S\'{a}rk\"{o}zy, and
Szemer\'{e}di~\cite{levitt2010avoid} and Chau, DeBiasio, and
Kierstead~\cite{chau2011posa}. A minimum degree condition ensuring the presence
of more general subgraphs, formulated in terms of the chromatic number, is given
by the bandwidth theorem of B\"{o}ttcher, Schacht, and
Taraz~\cite{bottcher2009proof}.

The above results all concern graphs with rather large minimum degrees. In the
case where the minimum degree can be smaller than $\floor{n/2}$, the graph might
not be connected, so one can no longer guarantee any spanning connected
subgraph; similarly, the graph might be bipartite, and one cannot guarantee any
non-bipartite subgraph. However, such graphs may still have some interesting
global properties. The following extension of Dirac's theorem to graphs with
minimum degree below $n/2$ was first conjectured by Enomoto, Kaneko, and
Tuza~\cite{enomoto1987p_3} and proved by Kouider and
Lonc~\cite{kouider1996covering}.

\begin{theorem}[\cite{kouider1996covering}]\label{thm:KL}
  Let $k \geq 2$ be an integer and let $G$ be a graph with $n$ vertices and
  minimum degree at least $n/k$. Then the vertex set of $G$ can be covered by $k
  - 1$ cycles, edges, or vertices.
\end{theorem}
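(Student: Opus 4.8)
The plan is an induction on $n$, splitting into the disconnected and the connected case. Suppose first that $G$ is disconnected, with components $G_1,\dots,G_c$ where $G_j$ has $n_j$ vertices. Every vertex of $G_j$ has all of its at least $n/k$ neighbours inside $G_j$, so $n_j > n/k$ and hence $c \le k-1$; moreover $\delta(G_j) \ge n/k \ge n_j/k_j$ with $k_j := \lceil kn_j/n\rceil \ge 2$. By the induction hypothesis each $G_j$ has a cover by $k_j-1$ cycles, edges, or vertices, and since $\lceil x\rceil < x+1$ gives $\sum_j \lceil kn_j/n\rceil \le k+c-1$, these covers together use $\sum_j(k_j-1)\le k-1$ pieces. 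With the small cases immediate, it remains to treat connected $G$.

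So assume $G$ is connected and, for contradiction, that $V(G)$ admits no cover by $k-1$ cycles, edges, or vertices. Among all families of at most $k-1$ pairwise vertex-disjoint cycles --- with single edges and single vertices counted as degenerate cycles --- pick one, $\mathcal{C}=\{C_1,\dots,C_t\}$, covering the maximum number of vertices, and, subject to that, with $t$ as small as possible. Put $U := V(G)\setminus\bigcup_i V(C_i)$. By assumption $U\neq\emptyset$, and then minimality of $t$ forces $t=k-1$: otherwise adding a single vertex of $U$ as its own degenerate cycle would cover strictly more. The rest of the proof consists of showing that the presence of a vertex of $U$ always allows one to modify $\mathcal{C}$ so as to cover more vertices with no more pieces --- either by enlarging a single $C_i$ to absorb a vertex of $U$, or by \emph{merging} two of the cycles together with a vertex of $U$ into one cycle --- contradicting the choice of $\mathcal{C}$.

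The engine is the rotation--extension (P\'osa) technique. Fix $u\in U$ and a cycle $C_i$ with $N(u)\cap V(C_i)\neq\emptyset$; choosing $v$ in this set and opening $C_i$ at $v$ gives a path on $V(C_i)\cup\{u\}$ with $u$ a fixed endpoint, and rotating yields the set $S_i\subseteq V(C_i)$ of achievable other endpoints. Were $u$ adjacent to some vertex of $S_i$, closing the path would produce a cycle on $V(C_i)\cup\{u\}$, improving $\mathcal{C}$; hence $N(u)\cap S_i=\emptyset$, and P\'osa's lemma bounds $|S_i|$ below in terms of $|N(u)\cap V(C_i)|$. Already the special case ``$u$ is adjacent to no two consecutive vertices of $C_i$'' (otherwise subdivide that edge of $C_i$ by $u$) gives $|N(u)\cap V(C_i)|\le\tfrac12|C_i|$, and summing over $i$ yields $n/k\le|N(u)|\le\tfrac12(n-|U|)$; for $k=2$ this is already the contradiction --- this is Dirac's theorem --- while for general $k$ it bounds $|U|$ and shows that $N(u)$ must occupy a positive fraction of several of the cycles. (The degenerate pieces are only easier: a single vertex adjacent to $U$, or an edge both of whose endpoints are adjacent to the same vertex of $U$, can be absorbed immediately.)

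For $k\ge 3$ the crux is to promote this into a merging step. From the rotated path on $V(C_i)\cup\{u\}$ with endpoint set $S_i$ one looks for an edge from $S_i$ to another cycle $C_j$; traversing it and then $C_j$ gives a path on $V(C_i)\cup V(C_j)\cup\{u\}$ with $u$ still fixed at one end, which can be rotated afresh inside $C_i\cup C_j$. If at any point the current endpoint set meets $N(u)$, closing the path merges $C_i$ and $C_j$ (together with $u$) into a single cycle, so $\mathcal{C}$ improves --- contradiction. Thus $N(u)$ must miss every endpoint set and every such jump target simultaneously, and the conclusion is reached by showing that the resulting P\'osa-type inequalities, combined with $|N(u)|\ge n/k$ and $\sum_i|C_i|=n-|U|$, cannot all hold. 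I expect the main obstacle to be precisely this bookkeeping: the rotation--extension argument must be run against all $k-1$ cycles at once while the degenerate configurations are controlled --- very short cycles, a vertex of $U$ whose neighbourhood is concentrated on one or two cycles, and the separate treatment of $|U|=1$ versus $|U|\ge 2$ --- and it is this case analysis, rather than any single idea, that forms the bulk of the argument.
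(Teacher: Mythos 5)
There is a genuine gap. Note first that the paper does not prove this statement at all: it is quoted from Kouider and Lonc, so your attempt has to be measured against the original argument, which is a long and delicate one. Your reduction to the connected case is fine (the ceiling computation $\sum_j(\lceil kn_j/n\rceil-1)\le k-1$ is correct and is the same trick the paper uses elsewhere), and the extremal set-up --- a family of at most $k-1$ disjoint cycles/edges/vertices covering the most vertices --- is a reasonable frame. But the entire content of the theorem is the step you defer: showing that for a vertex $u$ left uncovered, the rotation--extension conditions across all $k-1$ cycles are jointly contradictory. You explicitly leave this as ``bookkeeping'' and ``case analysis'' that you expect to be the bulk of the argument, so what you have is an outline of a plausible strategy, not a proof; nothing in the sketch rules out, for instance, the situation where the endpoint sets $S_i$ are small (P\'osa's lemma with minimum degree only $n/k$ gives no useful lower bound on $|S_i|$ relative to $|C_i|$), or where no vertex of $S_i$ sends an edge to any other cycle, in which case the merging step never gets off the ground and one must instead analyse the degrees of the vertices of $S_i$ into $V(C_i)\cup U$ --- precisely the hard regime in Kouider--Lonc.

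There is also a concrete error in the part you do carry out: your count $n/k\le|N(u)|\le\tfrac12(n-|U|)$ ignores the neighbours of $u$ inside $U$. An edge of $G[U]$ cannot be added as a new piece (you already have $k-1$ pieces) and does not obviously let you enlarge any existing piece, so $u$ may have up to $|U|-1$ neighbours in $U$ and the correct bound is only $|N(u)|\le\tfrac12(n-|U|)+|U|-1$. In particular the claim that for $k=2$ this count ``is already the contradiction --- this is Dirac's theorem'' is false as stated: with one cycle and $U\neq\emptyset$ the inequality $n/2\le\tfrac12(n-|U|)+|U|-1$ is satisfiable, and recovering Dirac's theorem requires in addition the connectivity/longest-cycle argument (an edge from $U$ to the cycle plus a non-consecutiveness analysis of a whole component of $G-C$, not just of a single vertex $u$). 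So both the special case you present as done and the general case you postpone are open in your write-up.
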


We note that if $n=n(k)$ is large enough, then `cycles, edges, or vertices' can
be replaced by `cycles' (this follows for example from the main result
in~\cite{balogh2017stability}).

There is a trend in modern combinatorics towards proving sparse analogues of
extremal results (see, e.g., Conlon's survey~\cite{conlon2014combinatorial}).
Our main result in this paper is a sparse analogue of Theorem~\ref{thm:KL}. We
use the following natural notion of uniformly sparse graphs, which can be seen
as a one-sided version of Thomason's jumbled graphs~\cite{thomason1987pseudo,
thomason1987random}.

\begin{definition}[$(p, \beta)$-sparse]
  A graph $G$ is \emph{$(p, \beta)$-sparse} if for all subsets $X, Y \subseteq
  V(G)$,
  \[
    e_G(X, Y) \leq p|X||Y| + \beta \sqrt{|X||Y|}.
  \]
\end{definition}

It is well known that for every $p=p(n)\leq 0.99$, the Erd\H{o}s--R\'{e}nyi
random graph $G_{n,p}$ is $(p,O(\sqrt{np}))$-sparse w.h.p.\footnote{With high
probability, that is, with probability tending to $1$ as $n \to
\infty$.}~\cite{krivelevich2006pseudo}. With this definition, our main result
reads as follows.

\begin{theorem}\label{thm:main-theorem}
  For every integer $k \geq 2$ and every $\alpha > 0$, there exists a positive
  $\eta(\alpha, k)$ such that the following holds for all sufficiently large
  $n$, all $p \in (0, 1]$, and all $\beta \leq\eta np/\log^3 n$. Let $G$ be a
  $(p,\beta)$-sparse graph with $n$ vertices and minimum degree at least $(1/k +
  \alpha)np$. Then the vertex set of $G$ can be covered by $k-1$ cycles.
\end{theorem}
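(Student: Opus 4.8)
The plan is to split $G$ into at most $k-1$ \emph{pieces}, each a sparse expander with a suitable minimum degree, and to find a Hamilton cycle in each by the absorbing method; the resulting $\le k-1$ cycles then cover $V(G)$. To set this up, write $\delta = 1/k+\alpha$ and record the structural consequences of the hypotheses. The $(p,\beta)$-sparse condition gives $2e(G[U]) = e_G(U,U)\le p|U|^2+\beta|U|$ for every $U\subseteq V(G)$, so $G[U]$ has average degree at most $p|U|+\beta$; combined with $\delta(G)\ge\delta np$ this forces any $U$ sending only an $o(1)$-fraction of its incident edges outside itself --- in particular any connected component of $G$ --- to satisfy $|U|\ge\delta n-\beta/p-o(n)=(\delta-o(1))n$, since $\beta/p\le\eta n/\log^3 n=o(n)$. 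Hence $G$ has at most $k-1$ connected components. The same double counting yields the expansion we will use: small sets expand by a polylogarithmic factor (precisely $|N(S)\cup S|\ge|S|\log^3 n$ for $|S|\le n/\polylog n$), while every $S$ with $n/\polylog n\le|S|\le\delta n$ satisfies $|N(S)\cup S|\ge(\delta-o(1))n$.

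For the decomposition, call $W\subseteq V(G)$ an \emph{expander} if $e_G(S,W\setminus S)\ge\frac{\alpha}{10}np\,|S|$ whenever $S\subseteq W$ and $|S|\le|W|/2$. Starting from the connected components of $G$, repeatedly replace any non-expander piece $W$ by the two sides of a sparse cut witnessing the failure of this condition. A sparse cut removes only an $o(1)$-fraction of the edges incident to either side, so the edge-counting bound above applies to both new pieces; hence every piece created during the process has at least $(\delta-o(1))n$ vertices, the process produces at most $k-1$ pieces, and it terminates with expanders $V_1,\dots,V_s$, $s\le k-1$. Since $\delta>1/k$, having $s\le k-1$ pieces of size $\ge(\delta-o(1))n$ summing to $n$ forces every piece to have fewer than $2\delta n$ vertices. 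As every vertex keeps all but $o(np)$ of its $\ge\delta np$ neighbours inside its own piece, apart from a small \emph{exceptional} set $B_i$ accumulated at the cuts, each $G[V_i]$ is --- after deleting $B_i$ --- a $(p,\beta)$-sparse expander with minimum degree at least $(\tfrac12+\alpha')|V_i|p$ for some $\alpha'=\alpha'(\alpha,k)>0$.

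It remains to find a Hamilton cycle in each $G[V_i]$, which is where the absorbing method enters. Using the polylogarithmic expansion of small sets, build an absorbing path inside $G[V_i\setminus B_i]$ that can absorb any sufficiently small vertex set --- in particular all of $B_i$ --- while staying a path with the same endpoints; a Pósa rotation--extension argument, powered by the near-linear expansion of medium-sized sets, then extends this path to cover all but a negligible part of $V_i$; absorbing the leftover and closing the path into a cycle, again via expansion, produces a Hamilton cycle of $G[V_i]$. One such cycle per piece covers $V(G)$ with at most $k-1$ cycles.

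I expect the decomposition to be the main obstacle: the sparse cuts must be chosen so that simultaneously the pieces stay large enough to keep their number below $k$, the induced subgraphs retain enough minimum degree for resilient Hamiltonicity, and only few exceptional vertices collect at the cut boundaries --- the last being genuinely delicate when $p$ is small, since a vertex set small in cardinality need not be small relative to a typical degree $\delta np$. Proving Hamiltonicity of sparse expanders with a few low-degree vertices via absorption is also technically substantial, though it follows established lines.
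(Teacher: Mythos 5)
Your decomposition step is broadly in the spirit of the paper's Partitioning Lemma, but the reduction you build on it --- ``each piece has fewer than $2\delta n$ vertices, hence relative minimum degree at least $(1/2+\alpha')|V_i|p$, hence one Hamilton cycle per piece suffices'' --- has a genuine gap, and it is exactly the gap that forces the paper's much heavier machinery. The arithmetic ``$s\le k-1$ pieces of size $\ge(\delta-o(1))n$ summing to $n$ forces every piece below $2\delta n$'' is only valid when $s=k-1$; if the process stops with fewer pieces (in particular $s=1$, i.e.\ $G$ itself is a connected expander, which your procedure cannot rule out), a piece can have up to $n$ vertices and relative minimum degree only $1/k+\alpha$. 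This is not a technicality: for $k\ge 3$ take $G_{n,p}$, fix $A\subseteq V$ with $|A|=(1/k+\alpha)n$, and delete all edges inside $V\setminus A$. The result is $(p,O(\sqrt{np}))$-sparse, has minimum degree $(1/k+\alpha-o(1))np$, and is a single good expander, so your decomposition returns it unchanged --- yet it contains an independent set of size $(1-1/k-\alpha)n>n/2$ and therefore has no Hamilton cycle; it genuinely needs up to $k-1$ cycles. Even in the extremal case $s=k-1$, a piece of size close to $2n/k$ only inherits relative degree $1/2-o(1)$, not $1/2+\alpha'$, so resilient-Hamiltonicity results do not apply there either.

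The paper's route accepts that a piece cannot in general be swallowed by one cycle: after the Partitioning Lemma produces $\ell<k$ expander pieces $V_1,\dotsc,V_\ell$ (each with only \emph{essential} minimum degree $(1/k+\alpha-\xi)np$ and true minimum degree $\Omega(np/k^2)$ --- note you also cannot keep the full minimum degree for \emph{all} vertices of a piece, only for most), it proves a separate theorem saying that each expander piece $V_i$ can be covered by $k_i-1$ cycles with $k_i=\lceil k|V_i|/n\rceil$, and concludes since $\sum_i(k_i-1)<k$. Proving that statement is where the real work lies: one first covers the piece by $k_i-1$ path forests using the sparse regularity lemma together with the dense Kouider--Lonc theorem in the reduced graph, and only then joins the paths of each forest into a cycle via a Connecting Lemma and an absorber. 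Moreover, because an expander piece may be bipartite, single-vertex absorbers and standard P\'osa rotation arguments are unavailable; the paper needs two-vertex absorbers combined through a template graph and a careful balancing of how many vertices are taken from each side of the bipartition. So the step you dismiss as ``follows established lines'' is not Hamiltonicity of a sparse expander at all, and the proposal as written does not prove the theorem.
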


The minimum degree $(1/k+\alpha)np$ cannot be much improved. Indeed, assume
$\log^6 n/n \ll p \ll 1$ and let $G$ be a graph consisting of $k$
vertex-disjoint copies of $G_{n/k,p}$. Then w.h.p.\ $G$ has minimum degree at
least $(1/k-o(1))np$ and is $(p,\beta)$-sparse with $\beta = O(\sqrt{np}) =
o(np/\log^3 n)$, but cannot be covered by $k-1$ cycles. A very similar
construction shows that the upper bound on $\beta$ in our result is optimal up
to the logarithmic factors. To see this, take any $\log n/n \ll p \ll 1$ and
consider the random graph $G$ given by the union of $k$ vertex-disjoint copies
of $G_{n/k, q}$, where $q = (1 + 2k\alpha)p$. Then $G$ cannot be covered by $k -
1$ cycles but, at the same time, w.h.p.\ it has minimum degree at least $(1/k -
o(1))nq \geq (1/k + \alpha)np$ and is $(q, O(\sqrt{nq}))$-sparse. The latter
means that for all $X,Y\subseteq V(G)$,
\[
  e(X,Y) \leq q|X||Y| + O(\sqrt{n q|X||Y|}) \leq p|X||Y| + O(np\sqrt{|X||Y|}),
\]
using $p|X||Y|\leq np\sqrt{|X||Y|}$, so $G$ is in fact $(p,O(np))$-sparse.

Our main motivation for studying the problem from
Theorem~\ref{thm:main-theorem} is the connection to the local resilience of
sparse random and pseudorandom graphs. The systematic study of this notion was
initiated by Sudakov and Vu~\cite{sudakov2008local} and, since then, the topic
has garnered considerable attention (see, e.g.,~\cite{allen2020bandwidth,
balogh2011local, balogh2012corradi, lee2012dirac, montgomery2020hamiltonicity,
vskoric2018local} and the surveys~\cite{bottcher2017large,
sudakov2017robustness}).

\begin{definition}[Local resilience]
  Let $\cP$ be a monotone\footnote{A graph property $\cP$ is monotone if it is
  preserved under adding edges.} graph property and let $G$ be a graph in $\cP$.
  The \emph{local resilience of $G$ with respect to $\mathcal P$} is defined as
  the maximum $r\in [0, 1]$ such that, for every $H\subseteq G$ satisfying
  $\deg_H (v) < r \deg_G(v)$ for all $v\in V(G)$, we have $G - H \in \cP$.
\end{definition}

For example, Theorem~\ref{thm:KL} implies that the local resilience of $K_n$
with respect to having a vertex-cover by $k-1$ cycles is at least $(k-1)/k -
o(1)$, which is easily seen to be optimal (consider a disjoint union of $k$
cliques of size $n/k$). Since $\Gnp$ is w.h.p.\ $(p, O(\sqrt{np}))$-sparse and
has degrees concentrated around $np$, Theorem~\ref{thm:main-theorem} has the
following consequence for the local resilience of random graphs.

\begin{theorem}\label{thm:main-theorem-gnp}
  Let $k\geq 2$ be an integer and let $p = p(n)$ be such that $p\gg \log^6 n/n$.
  Then the local resilience of $\Gnp$ with respect to having a vertex-cover by
  $k-1$ cycles is w.h.p.\ $(k-1)/k \pm o(1)$.
\end{theorem}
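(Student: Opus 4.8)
The plan is to derive this directly from Theorem~\ref{thm:main-theorem} together with two standard facts about the random graph: that $\Gnp$ is w.h.p.\ $(p,C\sqrt{np})$-sparse for some absolute constant $C$ (see~\cite{krivelevich2006pseudo}), and that, since $p\gg\log n/n$, w.h.p.\ every vertex $v$ satisfies $\deg_{\Gnp}(v)=(1\pm o(1))np$. The one arithmetic point to notice is that the hypothesis $p\gg\log^6 n/n$ forces $C\sqrt{np}=o(np/\log^3 n)$, so that for any fixed $\alpha>0$ and all large $n$ the bound $\beta=C\sqrt{np}\le\eta(\alpha,k)\,np/\log^3 n$ required by Theorem~\ref{thm:main-theorem} holds. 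We also note that the property $\cP$ of having a vertex-cover by $k-1$ cycles is monotone (adding edges preserves the covering cycles) and that $\Gnp\in\cP$ w.h.p.\ (it is w.h.p.\ Hamiltonian), so the local resilience is well defined. Throughout fix an integer $k\ge2$, write $G=\Gnp$, and condition on the w.h.p.\ event $\cE$ that $G$ is $(p,C\sqrt{np})$-sparse and has all degrees in $(1\pm\gamma)np$ for some $\gamma=\gamma(n)\to0$.

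\emph{Lower bound.} Fix $\alpha'\in(0,1/k)$ and put $r=(k-1)/k-\alpha'$. Let $H\subseteq G$ be any subgraph with $\deg_H(v)<r\deg_G(v)$ for every $v$, and set $G'=G-H$. Then on $\cE$,
\[
  \delta(G')>(1-r)\,\delta(G)\ge\Big(\tfrac1k+\alpha'\Big)(1-\gamma)np\ge\Big(\tfrac1k+\tfrac{\alpha'}{2}\Big)np
\]
for $n$ large, while $G'$, being a subgraph of $G$, is still $(p,C\sqrt{np})$-sparse. Since $C\sqrt{np}\le\eta(\alpha'/2,k)\,np/\log^3 n$ for $n$ large, Theorem~\ref{thm:main-theorem} applied with $\alpha=\alpha'/2$ shows that $V(G')=V(G)$ can be covered by $k-1$ cycles, i.e.\ $G-H\in\cP$. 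As $H$ was arbitrary, on $\cE$ the local resilience of $G$ is at least $(k-1)/k-\alpha'$; hence this holds w.h.p.

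\emph{Upper bound.} Fix $\alpha'\in(0,1/k)$ and put $r=(k-1)/k+\alpha'$. Partition $V(G)$ into $k$ parts $V_1,\dots,V_k$, each of size $\lfloor n/k\rfloor$ or $\lceil n/k\rceil$, and let $H\subseteq G$ consist of all edges with endpoints in distinct parts. Since each relevant binomial has mean $\gg\log n$, a Chernoff bound and a union bound over the $n$ vertices show that w.h.p.\ every $v\in V_i$ has $\deg_{G[V_i]}(v)=(1\pm o(1))(n/k)p$; consequently $\deg_H(v)=\deg_G(v)-\deg_{G[V_i]}(v)=(1\pm o(1))\tfrac{k-1}{k}np<r\deg_G(v)$ for all $v$, w.h.p. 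On the other hand $G-H=G[V_1]\cup\dots\cup G[V_k]$ has at least $k$ nonempty connected components, and since every cycle is connected, no $k-1$ cycles can meet all $k$ of them; thus $G-H\notin\cP$. Hence w.h.p.\ the local resilience of $G$ is less than $(k-1)/k+\alpha'$.

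Combining the two bounds and letting $\alpha'\to0$ slowly (a routine diagonalisation over a sequence $\alpha'_j\to0$) shows that the local resilience of $\Gnp$ is w.h.p.\ $(k-1)/k\pm o(1)$. There is no real obstacle here beyond bookkeeping: all the substance sits in Theorem~\ref{thm:main-theorem}, and the only points needing (minor) care are the passage from the $\sqrt{np}$-scale sparseness of $\Gnp$ to the $np/\log^3 n$-scale bound needed to invoke it, and the uniform degree concentration used in both directions.
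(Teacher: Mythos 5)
Your proposal is correct and follows essentially the same route as the paper, which derives the lower bound directly from Theorem~\ref{thm:main-theorem} using that $\Gnp$ is w.h.p.\ $(p,O(\sqrt{np}))$-sparse with degrees concentrated around $np$ (the hypothesis $p\gg\log^6 n/n$ being exactly what makes $O(\sqrt{np})\le\eta np/\log^3 n$), and obtains the upper bound by deleting the cross edges of a partition of $V(G)$ into $k$ parts so that $G-H$ splits into $k$ components that no $k-1$ cycles can all meet. The only cosmetic difference is that you use a fixed balanced partition with a Chernoff estimate where the paper suggests a random partition; both work equally well.
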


Indeed, it is not difficult to see that $(k-1)/k+o(1)$ is an upper bound
(consider a random partition of the vertex set into $k$ parts). Note that if $p
\ll \log n/n$, then w.h.p.\ $\Gnp$ has an unbounded number of connected
components and thus no vertex-cover by $k-1$ cycles. We believe that with a bit
more care, one could improve the lower bound on $p$ in the above theorem to
$p\gg \log^4 n/n$, using essentially the same proof strategy. However, there is
a natural barrier in our method that prevents us from going down all the way to
$\log n/n$ (which is the threshold for having a vertex-cover by a constant
number of cycles). Doing so would require new ideas and techniques.

Note also that the case $k = 2$ in Theorem~\ref{thm:main-theorem-gnp}
corresponds to the problem of determining the local resilience $\Gnp$ with
respect to Hamiltonicity. This question was resolved independently by
Montgomery~\cite{montgomery2019hamiltonicity} and by Nenadov, Steger, and the
third author~\cite{nenadov2019resilience}, who showed that the local resilience
of $\Gnp$ with respect to Hamiltonicity is w.h.p.\ $1/2 \pm o(1)$ whenever $p =
(\log n + \log\log n + \omega(1))/n$.

Our last result concerns pseudorandom graphs. An $(n,d,\lambda)$-graph is a
$d$-regular graph with $n$ vertices for which all eigenvalues of the adjacency
matrix, with the exception of the largest one, are bounded in absolute value by
$\lambda$. It follows from the expander mixing lemma that every $(n, d,
\lambda)$-graph is $(d/n, \lambda)$-sparse. Thus, Theorem~\ref{thm:main-theorem}
has the following consequence.

\begin{theorem}\label{thm:main-theorem-ndlambda}
  For every integer $k \geq 2$ and every $\alpha > 0$, there exists a positive
  $\eta(\alpha, k)$ such that the following holds for all sufficiently large
  $n$. Let $G$ be an $(n,d,\lambda)$-graph with $\lambda \leq \eta d/\log^3 n$.
  Then the local resilience of $G$ with respect to having a vertex-cover by $k -
  1$ cycles is $(k - 1)/k \pm \alpha$.
\end{theorem}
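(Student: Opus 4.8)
The plan is to deduce Theorem~\ref{thm:main-theorem-ndlambda} directly from Theorem~\ref{thm:main-theorem}. Fix $k\geq 2$ and $\alpha>0$, and let $\eta=\eta(\alpha,k)>0$ be the constant furnished by Theorem~\ref{thm:main-theorem}. Let $G$ be an $(n,d,\lambda)$-graph with $\lambda\leq \eta d/\log^3 n$ and $n$ large. Recall from the expander mixing lemma that $G$ is $(d/n,\lambda)$-sparse, and observe that since $d\leq n-1$ we have $p:=d/n\in(0,1]$, so the hypotheses of Theorem~\ref{thm:main-theorem} are of the right shape with this $p$ and $\beta:=\lambda$.

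For the lower bound I would show that the local resilience of $G$ is at least $(k-1)/k-\alpha$. Let $H\subseteq G$ be any subgraph with $\deg_H(v)<\big((k-1)/k-\alpha\big)d$ for every vertex $v$, and put $G'=G-H$. Then $\delta(G')>(1/k+\alpha)d=(1/k+\alpha)np$, and $G'$ is again $(d/n,\lambda)$-sparse because $e_{G'}(X,Y)\leq e_G(X,Y)$ for all $X,Y\subseteq V(G)$. Since $\beta=\lambda\leq\eta d/\log^3 n=\eta np/\log^3 n$, Theorem~\ref{thm:main-theorem} applies to $G'$ and shows that $V(G')=V(G)$ can be covered by $k-1$ cycles. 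As $H$ was arbitrary, this is exactly the claimed lower bound.

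For the upper bound I would use the standard random partition. Assign each vertex of $G$ independently and uniformly to one of $k$ classes $V_1,\dots,V_k$, and let $H$ be the set of edges of $G$ joining distinct classes. For a fixed vertex $v$, conditional on its class $j$, the quantity $|N(v)\cap V_j|$ is $\mathrm{Bin}(d,1/k)$-distributed, so a Chernoff bound gives $\Pr[\,|N(v)\cap V_j|\leq(1/k-\alpha/2)d\,]\leq e^{-c(\alpha,k)\,d}$. Here one uses that the hypothesis forces $d=\omega(\log n)$: the identity $\sum_i\lambda_i^2=\operatorname{tr}(A^2)=nd$ for the adjacency matrix $A$ yields $(n-1)\lambda^2\geq d(n-d)$, which combined with $\lambda\leq\eta d/\log^3 n$ gives $d\geq\log^6 n/(100\eta^2)$ whenever $d\leq 0.99n$ (and $d=\omega(\log n)$ is immediate when $d>0.99n$). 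A union bound over the $n$ vertices, together with the fact that each class is non-empty with probability $1-o(1)$, now produces a partition for which simultaneously: every class is non-empty, so $G-H$ is a vertex-disjoint union of $k$ non-empty induced subgraphs and hence has at least $k$ connected components, which cannot be covered by $k-1$ cycles since each cycle lies inside a single component; and $\deg_H(v)=d-|N(v)\cap V_j|<\big((k-1)/k+\alpha/2\big)d<\big((k-1)/k+\alpha\big)d$ for all $v$. Thus the local resilience of $G$ is strictly smaller than $(k-1)/k+\alpha$, and together with the previous paragraph it equals $(k-1)/k\pm\alpha$.

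I do not expect a genuine obstacle: the combinatorial heart of the matter is Theorem~\ref{thm:main-theorem}, and what remains is bookkeeping. The only points requiring (minor) care are invoking the expander mixing lemma to record that $G$ is $(d/n,\lambda)$-sparse, checking $d/n\leq1$ so that Theorem~\ref{thm:main-theorem} may be applied with $p=d/n$, and noting that the eigenvalue hypothesis forces $d\gg\log n$ — the analogue of the condition $p\gg\log^6 n/n$ in the $G_{n,p}$ statement — which is precisely what makes the random-partition construction work in the upper bound.
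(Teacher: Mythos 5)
Your proposal is correct and follows exactly the route the paper intends: the lower bound by applying Theorem~\ref{thm:main-theorem} with $p=d/n$, $\beta=\lambda$ to $G-H$ (using the expander mixing lemma to get $(d/n,\lambda)$-sparseness), and the upper bound by deleting the edges across a random $k$-partition, which the paper only asserts as ``not difficult to see''. Your additional details---in particular the trace argument $(n-1)\lambda^2\geq d(n-d)$ forcing $d\gg\log n$ so that the Chernoff bound and union bound go through---are accurate and fill in the bookkeeping the paper omits.
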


Again, it is not difficult to see that $(k-1)/k+o(1)$ is an upper bound.
Theorem~\ref{thm:main-theorem-ndlambda} generalises a result of Sudakov and
Vu~\cite{sudakov2008local} stating that every $(n, d, \lambda)$-graph $G$ with
$\lambda \leq d/\log^2 n$ has local resilience at least $1/2-o(1)$ with respect
to Hamiltonicity.

Theorems~\ref{thm:main-theorem-gnp} and \ref{thm:main-theorem-ndlambda} are, to
the best of our knowledge, the first positive results on the local resilience of
(pseudo)random graphs where the local resilience is significantly larger than
$1/2$. We believe the methods introduced in this paper could be used to tackle
different problems allowing such high resilience.

\subsection{Methods and techniques}

The proof of Theorem~\ref{thm:main-theorem} combines several techniques for
embedding large structures into (pseudo)random graphs and their subgraphs. The
focal point is the {\em absorbing method}, first introduced under this name by
R\"{o}dl, Rucinski, and Szemer\'{e}di~\cite{rodl2006dirac}, but already used
implicitly in earlier works of Erd\H{o}s, Gy\'{a}rf\'{a}s, and
Pyber~\cite{erdHos1991vertex} and Krivelevich~\cite{krivelevich1997triangle}.

We now give a simplified account of how this method approaches the problem of
embedding a spanning graph $S$ (in our case, a spanning union of $k-1$ cycles)
into a graph $G$. First, one reserves a small subset $R$ of the vertex set of
$G$, called the \emph{reservoir}; frequently, this is simply a uniformly random
subset of small size. Then one embeds a certain highly structured graph $A$,
called the \emph{absorber}, into $G[V(G) \setminus R]$, such that the following
holds: suppose that we embed a fixed subgraph $S'\subseteq S$ into $G[V(G)
\setminus V(A)]$ in such a way that all vertices outside of $R\cup V(A)$ are
covered; then there exists a completion of the embedding of $S'$ to an embedding
of $S$. We remark that, usually, it is very difficult to control which vertices
of $R$ are used by the embedding of $S'$, so this property relies on a careful
choice of $A$ (we think of $A$ as `absorbing' the vertices in $R$ not covered by
the embedding of $S'$). In this way, the problem of embedding the spanning graph
$S$ is reduced to the (often easier) problem of embedding the non-spanning graph
$S'$. A method similar to the one described in this paragraph has been
successfully applied to numerous problems in (random) graph theory (see,
e.g.,~\cite{ferber2017robust, georgakopoulos2018spanning,
glock2021decompositions, kwan2020almost, levitt2010avoid,
montgomery2019spanning, nenadov2019powers}).

The first step towards carrying out the above approach is to figure out the
structure of the absorber. In our case, one of the main issues to overcome is
that the graph $G$ might be bipartite, which means that, in order to have any
hope of embedding the absorber into $G[V(G) \setminus R]$, the absorber must be
bipartite as well. This creates several technical challenges. Most importantly,
we cannot use the common approach of building an absorber by stringing together
many single-vertex absorbers, as absorbing only a single vertex may upset the
delicate balance between the two parts of the bipartition; rather, we need to be
able to absorb two vertices at a time (one from each part of the bipartition).
This is done using a variation of a trick of
Montgomery~\cite{montgomery2019spanning}. A consequence of this approach is that
our absorber is only able to absorb subsets of $R$ that contain the same number
of vertices from each part of the (hypothetical) bipartition.

Even after deciding upon a suitable structure for the absorber, we still face
the issue of actually embedding this structure into $G[V(G) \setminus R]$. In
the context of sparse pseudorandom graphs, this is usually done by exploiting
the expansion properties of the graph to show that one can connect prescribed
pairs of vertices or edges by disjoint copies of a given fixed graph $F$ (for
example, a path of length $\log n$). This statement is usually referred to as
the \emph{Connecting Lemma}, though its precise formulation depends on the
nature of $S$. The actual absorber is then embedded by multiple uses of this
lemma. A difficulty that arises in our setting (and that is not a problem when
the minimum degree is at least $(1/2+o(1))np$) is that the graph $G$ that we are
dealing with might not be a very good expander at all---in fact, if $k$ is
large, then $G$ might have a large number of connected components, implying that
there are fairly small sets (of size roughly $n/k$) that do not expand at all.
An important step in our proof is to show that $G$ can nevertheless be
partitioned into at most $k - 1$ subgraphs, each having strong enough expansion
properties to embed an absorber, all without sacrificing much of the minimum
degree. We refer to this as the \emph{Partitioning Lemma}.

In order to complete the proof, we cover each of these expanding subgraphs by
systems of disjoint paths, leaving uncovered only vertices in the reservoir and
the absorber. This is done using a standard application of the sparse regularity
lemma in conjunction with the recent bootstrapping argument of Nenadov and the
second author~\cite{nenadov2020komlos}. Finally, the absorber is used to connect
each of these systems of disjoint paths into a cycle (and picking up the
uncovered vertices in the reservoir along the way).

We believe that some of these techniques are likely to be of use for other,
related problems concerning the structure of uniformly sparse graphs satisfying
a minimum degree condition. In particular, the Partitioning Lemma and Connecting
Lemma that we prove are quite general statements that are largely unrelated to
the concrete problem that is solved in this paper.

\subsection{Organisation of the paper}

The paper is structured as follows. In Section~\ref{sec:preliminaries} we
introduce the notion of expander graphs and state several useful properties of
such graphs. Furthermore, we mention some of the more standard tools we use,
namely Haxell's criterion for matchings in hypergraphs and Szemer\'{e}di's
regularity lemma for sparse graphs and related concepts. In
Section~\ref{sec:the-main-proof} we reduce Theorem~\ref{thm:main-theorem} to a
version in which one can assume that the graph is an expander graph. We also
state all the necessary `big gun' lemmas used in order to prove it and
subsequently give a proof of Theorem~\ref{thm:main-expander} modulo those
lemmas. Each of the following
Sections~\ref{sec:inheritance}--\ref{sec:embedding-lemma} are fully dedicated to
the proof of one of the lemmas.

\subsection{Notation}

We use standard graph theoretic notation. In particular, given a graph $G$,
$V(G)$ and $E(G)$ denote the sets of vertices and edges of $G$, respectively. We
write $v(G) = |V(G)|$ and $e(G) = |E(G)|$. For a subset $X \subseteq V(G)$, we
denote by $G[X]$ the subgraph induced by the vertex set $X$. For two (not
necessarily disjoint) subsets $X, Y \subseteq V(G)$, we write $e_G(X, Y)$ for
the number of pairs in $X \times Y$ that form an edge, and $e_G(X)$ for the
number of edges in $X$. Note that $e_G(X, X) = 2e_G(X)$. Furthermore, we denote
by $N_G(X, Y)$ the set of all neighbours in $Y$ of vertices from $X$. We
abbreviate $N_G(\{x\}, Y)$ to $N_G(x, Y)$ and define $\deg_G(x, Y) = |N_G(x,
Y)|$ and $\deg_G(x) = \deg_G(x, V(G))$. We say that a path $P$ \emph{connects}
two vertices $x, y$ if $x$ and $y$ are its endpoints, and say that $P$ is an
$xy$-path. The length of a path is defined as the number of edges in it. For
$\ell \in \N$, we denote by $N_G^{\ell}(X, Y)$ the set of vertices $y \in Y$ for
which there exists a path $P$ of length at most $\ell$ connecting $y$ to some $x
\in X$ and whose internal vertices are in $Y$; in particular $N_G^1(X, Y) =
N_G(X, Y)$. Again, we abbreviate $N_G^\ell(\{x\},Y) = N_G^\ell(x,Y)$. If $X$ and
$Y$ are disjoint, then $G[X,Y]$ is the induced bipartite subgraph with parts $X$
and $Y$, and the {\em density} of the pair $(X, Y)$ is $d_G(X, Y) = e_G(X,
Y)/(|X||Y|)$. In all of the above notations, we may omit the subscript $G$ when
it is clear which graph we are talking about.

For an integer $n$ we write $[n] = \{1, \dotsc, n\}$ and for $a, b, \eps \in
\R$, we write $a \in (1 \pm \eps)b$ to denote $(1 - \eps)b \leq a \leq (1 +
\eps)b$. We use the standard asymptotic notation $o, O, \omega$, and $\Omega$.
The logarithm function is always used with the natural base $e$. We suppress
floors and ceilings whenever they are not crucial. Finally, we use the
convention that if the statement of (say) Lemma 3.6 features a value named $C$,
then we may elsewhere write $C_{3.6}$ to denote this value.

\section{Expansion and other preliminaries}\label{sec:preliminaries}

A simple but important property of $(p,\beta)$-sparse graphs is that every set
$A$ of vertices with degree drastically above $|A|p$ must expand by a
significant amount. This is the content of our first lemma.

\begin{lemma}
  \label{lem:edges-out-small-set}
  Let $p \in [0, 1]$, let $\alpha, \beta > 0$, and let $G$ be a $(p,
  \beta)$-sparse graph on $n$ vertices. Assume $A \subseteq V(G)$ is a subset
  such that $\deg(a) \geq |A|p + \alpha np$ for all $a \in A$. Then
  \[
    e(A, V(G) \setminus A) \geq (\alpha np - \beta) |A|.
  \]
  In particular, if $A$ is not empty, then there exists a vertex $a \in A$ such
  that $\deg(a, V(G) \setminus A) \geq \alpha np - \beta$.
\end{lemma}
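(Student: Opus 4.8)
The plan is to estimate $e(A, V(G)\setminus A)$ from below by combining a lower bound on the total degree sum over $A$ with an upper bound on the number of edges internal to $A$ coming from the $(p,\beta)$-sparseness.

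First, summing the hypothesis $\deg(a)\geq |A|p+\alpha np$ over all $a\in A$ gives
\[
  \sum_{a\in A}\deg(a)\geq |A|^2 p+\alpha np|A|.
\]
On the other hand, this degree sum splits as $\sum_{a\in A}\deg(a)=e(A,A)+e(A,V(G)\setminus A)=2e(A)+e(A,V(G)\setminus A)$. Applying the $(p,\beta)$-sparseness with $X=Y=A$ yields $e(A,A)=2e(A)\leq p|A|^2+\beta|A|$. Combining the two estimates,
\[
  e(A,V(G)\setminus A)=\sum_{a\in A}\deg(a)-e(A,A)\geq \bigl(|A|^2 p+\alpha np|A|\bigr)-\bigl(p|A|^2+\beta|A|\bigr)=(\alpha np-\beta)|A|,
\]
which is the desired inequality. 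For the final assertion, if $A\neq\emptyset$ then by averaging over the $|A|$ vertices of $A$ there must be some $a\in A$ with $\deg(a,V(G)\setminus A)\geq e(A,V(G)\setminus A)/|A|\geq \alpha np-\beta$.

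This argument is essentially a one-line double counting, so there is no real obstacle; the only point requiring a moment's care is the bookkeeping around $e(A,A)=2e(A)$ and the fact that the sparseness bound is stated for arbitrary (not necessarily disjoint) pairs $X,Y$, which is exactly why we may take $X=Y=A$.
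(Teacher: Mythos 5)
Your proof is correct and follows essentially the same route as the paper: both bound $\sum_{a\in A}\deg(a)=e(A,V(G))$ from below via the degree hypothesis, subtract the upper bound $e(A,A)\leq p|A|^2+\beta|A|$ from the sparseness condition with $X=Y=A$, and finish by averaging. No issues.
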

\begin{proof}
  Set $B = V(G) \setminus A$. Recalling the definition of $e(\cdot, \cdot)$ our
  assumption implies
  \[
    e(A, B) = e(A, V(G)) - e(A, A) \geq |A|^2p + |A|\cdot \alpha np - e(A, A).
  \]
  On the other hand, as $G$ is $(p, \beta)$-sparse,
  \[
    e(A, A) \leq |A|^2p + \beta|A|.
  \]
  Combining these inequalities gives $e(A, B) \geq (\alpha np - \beta) |A|$, and
  the last assertion follows simply by averaging.
\end{proof}

In particular, if $G$ is a $(p,o(np))$-sparse graph with minimum degree
$\Omega(np)$, then the above lemma shows that all small enough linear-sized
subsets of vertices expand by a factor $\Omega(np)$. An important role in the
proof is played by graphs in which also the larger sets of vertices have this
property. We make the following definition.

\begin{definition}[$q$-expander]
  \label{def:expander}
  Let $q > 0$. A graph $G$ is a \emph{$q$-expander} if for every partition $V(G)
  = V_1 \cup V_2$, we have
  \[
    e(V_1, V_2) \geq q|V_1||V_2|.
  \]
\end{definition}

Informally, we think of a $(p, o(np))$-sparse graph $G$ as being a `good
expander' if it is a $q$-expander for some $q = \Omega(p)$. One can see that
this is essentially best possible, since the definition of a $(p,o(np))$-sparse
graph implies that $e(V_1, V_2) \leq p|V_1||V_2| + \beta \sqrt{|V_1||V_2|} = (1
+ o(1)) p|V_1||V_2|$ for every partition $V = V_1 \cup V_2$ into sets of linear
size.

The following simple lemma allows us to assume without loss of generality that
our expanders are bipartite, which turns out to be convenient later on in the
proof.

\begin{lemma}
  \label{lem:expander-to-bipartite-expander}
  Let $q > 0$. Every $q$-expander $G$ contains a spanning bipartite
  $(q/2)$-expander as a subgraph.
\end{lemma}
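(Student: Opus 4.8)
The plan is to find a bipartition $V(G) = X \cup Y$ that captures at least half of the edges of $G$, and to let the bipartite subgraph consist of exactly the edges going between $X$ and $Y$. The standard way to do this is via a \emph{max-cut} argument: among all partitions $V(G) = X \cup Y$, pick one maximising $e(X, Y)$. Equivalently, one can invoke the probabilistic fact that a uniformly random partition puts each edge across the cut with probability $1/2$, so in expectation half the edges are cut, and hence some partition cuts at least $e(G)/2$ edges; but the max-cut formulation is cleaner since it gives a \emph{local} optimality property at every vertex, which I expect is not even needed here but is reassuring.

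Let $H$ be the spanning bipartite subgraph of $G$ with vertex classes $X$ and $Y$ and edge set $\{xy \in E(G) : x \in X, y \in Y\}$, where $(X, Y)$ is a max-cut partition of $G$. I claim $H$ is a $(q/2)$-expander. To see this, take any partition $V(G) = V_1 \cup V_2$. I need a lower bound on $e_H(V_1, V_2)$. The key point is that the four sets $X \cap V_1$, $X \cap V_2$, $Y \cap V_1$, $Y \cap V_2$ partition $V(G)$, and the $H$-edges between $V_1$ and $V_2$ are precisely the $G$-edges that go both across the $(X,Y)$-cut and across the $(V_1, V_2)$-cut. Writing $A_1 = X \cap V_1$, $A_2 = X \cap V_2$, $B_1 = Y \cap V_1$, $B_2 = Y \cap V_2$, one has $e_H(V_1, V_2) = e_G(A_1, B_2) + e_G(A_2, B_1)$.

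To bound this from below I would consider the two partitions $V(G) = (A_1 \cup B_2) \cup (A_2 \cup B_1)$ and $V(G) = (A_1 \cup B_1) \cup (A_2 \cup B_2)$ and apply the $q$-expander property of $G$ to each. The first gives $e_G(A_1 \cup B_2, A_2 \cup B_1) \geq q\,|A_1 \cup B_2|\,|A_2 \cup B_1|$, and expanding the left side yields $e_G(A_1, A_2) + e_G(A_1, B_1) + e_G(B_2, A_2) + e_G(B_2, B_1)$. The second partition is $(V_1) \cup (V_2)$ read through the $X/Y$ split differently\,---\,actually the cleanest route is: the two partitions whose cut-sizes I add up are $\{A_1 \cup B_2,\, A_2 \cup B_1\}$ (this is the $(X,Y)$-cut XOR the $(V_1,V_2)$-cut) and $\{A_1 \cup B_1,\, A_2 \cup B_2\} = \{V_1, V_2\}$. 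Adding the two $q$-expander inequalities, every one of the cross terms $e_G(A_i, B_j)$ appears on the left, so the sum of the two left-hand sides equals $2\big(e_G(A_1,B_2) + e_G(A_2,B_1)\big) + \big(\text{terms } e_G(A_1,A_2), e_G(B_1,B_2), e_G(A_1,B_1), e_G(A_2,B_2)\big)$\,---\,hmm, this needs care, so let me instead use the identity directly: summing the cut-sizes of $\{V_1, V_2\}$ and of $\{A_1 \cup B_2, A_2 \cup B_1\}$ counts each edge according to how many of these two cuts it crosses, which is $0$, $1$, or $2$; edges crossing both are exactly those counted in $e_H(V_1,V_2)$, so $e_G(V_1,V_2) + e_G(A_1 \cup B_2, A_2 \cup B_1) \le 2\,e(G) $ is the wrong direction. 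The right inequality is the reverse: $e_G(V_1,V_2) + e_G(A_1\cup B_2, A_2\cup B_1) \ge 2\,e_H(V_1,V_2)$ is also not immediate.

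The clean finish is simpler than the above fumbling suggests, and I expect this to be the only mildly delicate point: apply the $q$-expander hypothesis of $G$ to the partition $\{A_1 \cup B_2,\ A_2 \cup B_1\}$ to get $e_G(A_1,A_2) + e_G(A_1,B_1) + e_G(B_2,A_2) + e_G(B_2,B_1) \ge q\,|A_1 \cup B_2|\,|A_2 \cup B_1|$, and separately note by max-cut optimality that moving any single vertex $v$ to the other side does not increase the cut, which after summing over $v \in V(G)$ gives $e_G(A_i, B_i) \le e_G(A_i, B_j) + (\text{within-class terms})$ type bounds; alternatively, and most robustly, just observe that among the two bipartitions $\{V_1, V_2\}$ and $\{A_1\cup B_2, A_2\cup B_1\}$, at least one has cut size $\ge q\,n^2/8$-ish\,---\,no. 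The genuinely correct and short argument: $2\,e_H(V_1, V_2) = e_G(A_1, B_2) + e_G(A_2, B_1) + e_G(A_1, B_2) + e_G(A_2, B_1)$, and I bound $e_G(A_1,B_2)+e_G(A_2,B_1) \ge q|V_1||V_2| - \big(e_G(A_1,B_1) + e_G(A_2,B_2)\big)$ by applying $q$-expansion to $\{V_1,V_2\}$ and subtracting the $H$-\emph{non}-edges inside the cut; and $e_G(A_1,B_1)+e_G(A_2,B_2) \le e_G(A_1,B_2)+e_G(A_2,B_1)$ by the max-cut property applied to the $(X,Y)$ partition restricted appropriately. Hence $2\,e_H(V_1,V_2) \ge q|V_1||V_2|$, i.e.\ $e_H(V_1,V_2) \ge (q/2)|V_1||V_2|$, as required. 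I would write this last paragraph out carefully in the actual proof, isolating the inequality $e_G(A_1,B_1)+e_G(A_2,B_2) \le e_G(A_1,B_2)+e_G(A_2,B_1)$ and deriving it cleanly from maximality of the cut $(X,Y)$ (it says precisely that the alternative partition $(A_1 \cup B_2) \cup (A_2 \cup B_1)$ does not beat $(X,Y)$). That inequality, and making sure the bookkeeping of which edges lie in $H$ is airtight, is the main obstacle; everything else is the standard max-cut setup.
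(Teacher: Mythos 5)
Your overall plan is the right one and is in fact the paper's: take a max-cut bipartition, let $H$ be the bipartite subgraph it induces, and show $e_H(V_1,V_2)\geq e_G(V_1,V_2)/2$ by playing the cut $(X,Y)$ off against the swapped cut $(A_1\cup B_2,\,A_2\cup B_1)$. However, the two displayed inequalities in your final paragraph, as written, are wrong, and the second one is the step that would fail. You consistently confuse the two kinds of ``bad'' edges: the edges crossing the $(V_1,V_2)$-cut that are \emph{not} in $H$ are those inside $X$ or inside $Y$, namely $e_G(A_1,A_2)+e_G(B_1,B_2)$ --- not $e_G(A_1,B_1)+e_G(A_2,B_2)$, which are $H$-edges lying entirely inside $V_1$ or $V_2$ and which do not appear in $e_G(V_1,V_2)$ at all. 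Consequently your first inequality does not follow from expansion of $\{V_1,V_2\}$ (the correct consequence is $e_G(A_1,B_2)+e_G(A_2,B_1)\geq q|V_1||V_2|-e_G(A_1,A_2)-e_G(B_1,B_2)$), and your second inequality $e_G(A_1,B_1)+e_G(A_2,B_2)\leq e_G(A_1,B_2)+e_G(A_2,B_1)$ is \emph{not} what maximality of $(X,Y)$ says and is false in general: take $G$ complete bipartite with parts $X,Y$ of size $n/2$ (so $(X,Y)$ is the max cut), and let $V_2$ consist of one vertex from each side; then the left-hand side is about $n^2/4$ while the right-hand side is about $n$. So if you tried, as you propose, to ``derive it cleanly from maximality'', you could not.

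The repair is exactly the relabelling above, and then your argument closes and coincides with the paper's proof. Comparing the max cut $(X,Y)$ with the cut $(A_1\cup B_2,\,A_2\cup B_1)$ and cancelling the common terms $e_G(A_1,B_1)+e_G(A_2,B_2)$ gives precisely
\[
  e_G(A_1,A_2)+e_G(B_1,B_2)\;\leq\; e_G(A_1,B_2)+e_G(A_2,B_1)\;=\;e_H(V_1,V_2),
\]
which combined with $e_H(V_1,V_2)=e_G(V_1,V_2)-e_G(A_1,A_2)-e_G(B_1,B_2)\geq q|V_1||V_2|-\bigl(e_G(A_1,A_2)+e_G(B_1,B_2)\bigr)$ yields $2e_H(V_1,V_2)\geq q|V_1||V_2|$. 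The paper packages the same computation as a single counting identity, $2e_H(V_1,V_2)+e_G(A_1\cup B_2,A_2\cup B_1)=e_G(V_1,V_2)+e_G(X,Y)$, followed by one application of maximality; your two-inequality version is equivalent once the bookkeeping of which pairs constitute the $H$-edges across the cut is made airtight, which is the very point you flagged but then got wrong in the formulas.
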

\begin{proof}
  Let $V(G) = A \cup B$ be a partition of the vertex set of $G$ that maximises
  $e_G(A, B)$. We claim that $H = G[A, B]$ is a $(q/2)$-expander. Since we
  assume that $G$ is a $q$-expander, it is enough to show that for any partition
  of the vertices into non-empty sets $X$ and $Y$, we have $e_H(X, Y) \geq
  e_G(X, Y)/2$.

  To see this, define the sets $V_1 = X\cap A$, $V_2= X\cap B$, $V_3 = Y\cap A$,
  and $V_4 = Y\cap B$, and note that
  \begin{align*}
      2e_H(X,Y) + e_G(V_1\cup V_4, V_2\cup V_3)
      & =
      e_G(V_1\cup V_2,V_3\cup V_4) + e_G(V_1\cup V_3,V_2\cup V_4) \\
      & = e_G(X,Y) + e_G(A,B);
  \end{align*}
  here, the first equality can be verified by writing $e_H(X,Y) = e_G(V_1,V_4) +
  e_G(V_2,V_3)$ and observing that for all $1\leq i\leq j\leq 4$, both sides of
  the equality count each edge of $G[V_i,V_j]$ the same number of times. The
  maximal choice of $(A, B)$ ensures that $e_G(V_1\cup V_4, V_2\cup V_3) \leq
  e_G(A,B)$. This implies the desired inequality $e_H(X, Y) \geq e_G(X, Y)/2$.
\end{proof}

\subsection{Matchings in hypergraphs}

The following theorem due to Haxell has recently seen a surge of applications in
problems concerning embedding (spanning) structures into sparse graphs. It is
similar to Hall's theorem in spirit, providing a condition for the existence of
a perfect matching in certain hypergraphs.

\begin{theorem}[Haxell's criterion~\cite{haxell1995condition}]
  \label{thm:haxell-matching}
  Let $A$ and $B$ be disjoint sets and let $\cH = (A \cup B, E)$ be an
  $r$-uniform hypergraph such that $|A \cap e| = 1$ and $|B \cap e| = r - 1$ for
  every edge $e \in E$. Suppose that for every choice of subsets $S \subseteq A$
  and $Z \subseteq B$ such that $|Z| \leq (2r-3)(|S|-1)$, there is an edge $e
  \in E$ intersecting $S$ but not $Z$. Then $\cH$ contains an $A$-saturating
  matching (that is, a collection of disjoint hyperedges whose union contains
  $A$).
\end{theorem}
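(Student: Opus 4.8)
The plan is to prove Haxell's criterion by induction on $|A|$. The base cases $|A| \le 1$ are immediate: for $|A| = 1$, applying the hypothesis with $S = A$ and $Z = \emptyset$ (note $(2r-3)(|S|-1) = 0$) yields a single hyperedge whose $A$-part is $A$, which is the required matching. For the inductive step, abbreviate $f(m) = (2r-3)(m-1)$ and, for a nonempty $S \subseteq A$, let $\tau(S)$ be the minimum size of a set $Z \subseteq B$ that meets every hyperedge $e$ with $e \cap A \subseteq S$ (call such a $Z$ a \emph{cover of $S$}); the hypothesis is equivalent to $\tau(S) \ge f(|S|) + 1$ for every nonempty $S$. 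The strategy is to peel off one matched vertex: find a hyperedge $e$ with $e \cap A = \{a\}$ such that $\cH - a - (e \cap B)$ still satisfies the hypothesis, apply induction to this smaller hypergraph, and add $e$ back. The search for such an $e$ splits into two cases.

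\emph{Case 1: $\tau(S) \ge f(|S|) + r$ for every nonempty $S \subsetneq A$.} Pick any hyperedge $e$ (one exists, applying the hypothesis to any singleton) and let $a = e \cap A$. Set $\cH' = \cH - a - (e \cap B)$. For nonempty $S \subseteq A \setminus a$, any cover $Z'$ of $S$ in $\cH'$ yields a cover $Z' \cup (e \cap B)$ of $S$ in $\cH$ (a hyperedge with $A$-part in $S$ automatically avoids $a$, hence either meets $e \cap B$ or survives in $\cH'$ and meets $Z'$), so $\tau_{\cH'}(S) \ge \tau_{\cH}(S) - (r-1) \ge f(|S|) + 1$. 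Thus $\cH'$ satisfies the hypothesis; induction gives an $(A \setminus a)$-saturating matching, to which we add $e$.

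\emph{Case 2: some nonempty $S^* \subsetneq A$ has $\tau(S^*) \le f(|S^*|) + (r-1)$.} Choose such an $S^*$ with $|S^*|$ minimum and let $W$ be a minimum cover of $S^*$, so $|W| = \tau(S^*) \le f(|S^*|) + (r-1)$. We will (a) build an $S^*$-saturating matching $M_1$ in $\cH$ whose $B$-vertices all lie in $W$, and (b) apply induction to $\cH - S^* - W$. For (b): if $\cH - S^* - W$ failed the hypothesis, there would be a nonempty $T \subseteq A \setminus S^*$ and a cover $C \subseteq B \setminus W$ of the surviving hyperedges with $A$-part in $T$, with $|C| \le f(|T|)$; but then $C \cup W$ would be a cover of $T \cup S^*$ in $\cH$ (a hyperedge with $A$-part in $T \cup S^*$ either meets $W$, or survives in $\cH - S^* - W$ and meets $C$), giving $\tau(T \cup S^*) \le f(|T|) + f(|S^*|) + (r-1) < f(|T| + |S^*|) + 1$ since $r \ge 2$, a contradiction. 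Hence induction applies to $\cH - S^* - W$ (whose $A$-part has size $|A| - |S^*| < |A|$) and produces an $(A \setminus S^*)$-saturating matching $M_2$ avoiding $W$; then $M_1 \cup M_2$ saturates $A$. Step (a) is again an application of the criterion, now to the auxiliary hypergraph on $S^* \cup W$ whose hyperedges are the hyperedges of $\cH$ contained in $S^* \cup W$ (its $A$-part $S^*$ has size $< |A|$, so induction is available once its hypothesis is checked).

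The main obstacle is step (a): one has to verify that this auxiliary hypergraph satisfies the covering hypothesis relative to $S^*$, even though $W$ was chosen only as a minimum cover and not tailored to the sub-problem. This is exactly where the precise slack $2r-3$ in Haxell's condition is needed — a weaker $2r-2$ or $r-1$ would not suffice — together with the fact that a minimum cover has no redundant vertex (each $w \in W$ lies on a hyperedge of $\cH$ that meets $W$ only in $w$) and with the minimality of $|S^*|$, which upgrades the bound to $\tau(T) \ge f(|T|) + r$ for every nonempty $T \subsetneq S^*$ and so leaves exactly enough room to pay for the at most $r-1$ vertices by which a hyperedge of $\cH$ can protrude from $W$. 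Making this bookkeeping precise is the technical heart of the argument; the rest is the routine induction above. (An alternative, with the $2r-3$ arising from a different count, is to take $M$ to be a matching covering the most vertices of $A$, assume $a_0 \in A$ is uncovered, and grow a set $S \ni a_0$ and a cover $Z$ with $|Z| \le f(|S|)$ by repeatedly feeding $(S, Z)$ to the hypothesis: each returned hyperedge either enlarges $M$ (contradiction) or lets us adjoin to $S$ the $A$-endpoint of an $M$-edge while adding at most $2(r-1) - 1 = 2r - 3$ new vertices to $Z$; since $|S| \le |A|$ this must terminate, and termination contradicts the hypothesis.)
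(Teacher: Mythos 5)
A preliminary remark: the paper does not prove Theorem~\ref{thm:haxell-matching} at all --- it is imported as a black box from Haxell~\cite{haxell1995condition} --- so your argument must stand on its own, and it does not. Case~1 and part~(b) of Case~2 are fine, and for $r=2$ your scheme is exactly the classical defect/tight-set proof of Hall's theorem; but everything hinges on part~(a) of Case~2, which you explicitly defer (``the technical heart''), and that step is not merely unproven, it is false as formulated. A \emph{minimum} cover $W$ of $S^*$ need not contain the full $B$-part of even one hyperedge meeting $S^*$, so in general there is no $S^*$-saturating matching whose $B$-vertices all lie in $W$, and your auxiliary hypergraph on $S^*\cup W$ can be empty. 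Concretely, take $r=3$ and a vertex $a\in A$ all of whose hyperedges contain one fixed vertex $w\in B$ (for instance $\deg(a)=1$, which is perfectly compatible with the hypothesis). Then $\tau(\{a\})=1\le f(1)+(r-1)$, so Case~2 triggers with the minimum-size violator $S^*=\{a\}$ and $W=\{w\}$, and no hyperedge fits inside $S^*\cup W$, although the hypergraph may well satisfy the hypothesis and have an $A$-saturating matching. The ingredients you invoke cannot repair this: the ``private'' hyperedge witnessing non-redundancy of a vertex $w\in W$ is precisely a hyperedge protruding from $W$, the minimality of $S^*$ is vacuous for singletons, and enlarging $W$ so that it contains whole $B$-parts of edges clashes with part~(b), which needs $|W|\le(2r-3)|S^*|$ for the removal argument. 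This is exactly the point where the Hall-type induction fails for $r\ge 3$, and it is why Haxell's theorem is genuinely harder than Hall's theorem (her original proof, and the later topological proof of Aharoni--Haxell, are built around precisely this obstacle).

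Your parenthetical alternative (a maximum matching plus a growing pair $(S,Z)$ fed to the hypothesis) is much closer to an actual proof, but the assertion that each step ``adds at most $2(r-1)-1=2r-3$ new vertices to $Z$'' hides the real work: the hyperedge returned by the hypothesis may intersect several matching edges whose $A$-vertices lie outside $S$, and one must then both account for all of them in $Z$ and keep the alternating re-routing valid --- with the naive bookkeeping, the swap that is supposed to free one newly added vertex collides with the other matching edges met by the same witness hyperedge, so the invariant ``every vertex of $S$ can be freed'' breaks. Designing the structure and invariant so that the amortised cost is still $2r-3$ per vertex is the substance of Haxell's argument, not a routine termination count. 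So, as written, neither route constitutes a proof of the criterion with the constant $2r-3$.
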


\subsection{Sparse regularity lemma}

Given a graph $G$ and $\eps, p > 0$, we say that a pair $(X, Y)$ of disjoint
subsets $X, Y \subseteq V(G)$ is $(\eps, p)$-{\em regular} if for all subsets
$X' \subseteq X$ and $Y' \subseteq Y$ with $|X'| \geq \eps|X|$ and $|Y'| \geq
\eps|Y|$, we have $|d(X', Y') - d(X, Y)| \leq \eps p$. We say that the pair $(X,
Y)$ is $(\eps, p)$-{\em lower-regular} if for all $X'$ and $Y'$ as above, we
have $d(X', Y') \geq (1 - \eps)p$.

A partition $V(G) = V_0 \cup \dotsb \cup V_t$ is called an $(\eps, p)$-{\em
regular partition with exceptional class $V_0$} if $|V_0| \leq \eps n$, $|V_1| =
\dotsb = |V_t| \leq n/t$, and all but at most $\eps t^2$ pairs $(V_i, V_j)$ with
$1 \leq i < j \leq t$ are $(\eps, p)$-regular.

\begin{lemma}[Sparse regularity lemma~\cite{scott2011szemeredi}]
  \label{lem:sparse-regularity-lemma}
  For all $\eps, m > 0$, there exists $M(\eps, m)$ such that for every graph $G$
  on at least $M$ vertices, there exists an $(\eps, p)$-regular partition
  $(V_i)_{i = 0}^{t}$ of $V(G)$, where $p = e(G)/\binom{n}{2}$ is the density of
  $G$ and $m \leq t \leq M$.
\end{lemma}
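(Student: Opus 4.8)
The plan is to run the energy-increment (index) argument underlying Szemer\'{e}di's original proof, adapted to the sparse normalisation in which densities are measured in units of $p = e(G)/\binom n2$. Fix $\eps$ and $m$; we may assume $p > 0$, as otherwise $G$ is empty and the statement is trivial. Start from an arbitrary partition of $V(G)$ into $m$ parts of equal size together with an exceptional class $V_0$ holding the at most $m$ leftover vertices. At a general stage we have a partition $\cQ$ with $t$ ordinary parts; if $\cQ$ is $(\eps, p)$-regular we are done, so suppose it is not, i.e.\ more than $\eps t^2$ of the pairs $(W_i, W_j)$ of ordinary parts fail to be $(\eps,p)$-regular. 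Each such pair comes with a witness $W_i' \subseteq W_i$, $W_j' \subseteq W_j$ of size at least $\eps|W_i|$, $\eps|W_j|$ on which the density differs from $d(W_i,W_j)$ by more than $\eps p$. Refine every ordinary part simultaneously along all witnesses in which it participates (at most $t$ of them, so each part breaks into at most $2^t$ pieces), then restore equal part sizes by chopping the pieces into blocks of a common, very small size and dumping the remainders into $V_0$. This multiplies the number of parts by at most $4^t$ per step, and we never merge parts, so $t \ge m$ is maintained throughout.

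To control the number of steps one attaches to each partition a potential $\Phi(\cQ) = \sum_{i,j} \tfrac{|W_i||W_j|}{n^2}\,\psi\big(d(W_i,W_j)\big)$ for a suitable nonnegative $\psi$, chosen so that (i) $\Phi$ never decreases under refinement, (ii) each refinement step as above increases $\Phi$ by at least some $c = c(\eps) > 0$ \emph{independent of $p$}, and (iii) $\Phi$ stays below some $C = C(\eps)$, also independent of $p$. Granting this, the iteration terminates after at most $C/c = R(\eps)$ steps, so $t$ is bounded by an $R(\eps)$-fold tower in $m$, yielding the desired $M(\eps,m)$; the remaining requirements ($|V_0| \le \eps n$, equal ordinary parts, $t\ge m$) are routine bookkeeping once the block size is taken negligible relative to $R(\eps)$. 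Taking $\psi(d)=d^2$ makes (i) trivial and gives (iii) with $C = p \le 1$, but only a per-step increment of order $\eps^5 p^2$ — a bound that degrades as $p\to 0$. The sparse argument does better by exploiting two structural facts about where irregularity can live: a pair of density at most $\eps^3 p$ is automatically $(\eps,p)$-regular (on any admissible subpair the density is at most $d(W_i,W_j)/\eps^2 \le \eps p$), and the pairs of density exceeding $\theta p$ have total vertex-weight less than $1/\theta$ (their edges cannot exceed $2e(G) < pn^2$), hence number fewer than $O(t^2/\theta)$ once the ordinary parts are balanced. Choosing $\theta = \theta(\eps)$ large therefore forces at least $\tfrac12\eps t^2$ of the irregular pairs to have density in the compact window $(\eps^3 p,\, \theta p]$, on which the rescaled density $d/p$ ranges over a fixed interval.

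The main obstacle is to convert this into the $p$-free bounds (ii) and (iii) simultaneously: a genuinely convex $\psi$ cannot be bounded on $[0,\infty)$, so one cannot merely rescale and truncate. The resolution — the substance of Scott's proof~\cite{scott2011szemeredi} — is to design $\psi$ (essentially $d \mapsto (d/p)^2$ on the window, extended so as to keep $\Phi$ monotone) and to carry the energy accounting restricted to the window, treating separately the bounded-weight mass that can leak to very-high-density subpairs. On the window $\psi$ is strongly convex with parameter of order $1/p^2$, so by the defect form of Jensen's inequality, refining one window pair along its witness raises its contribution to $\Phi$ by at least $\tfrac{|W_i||W_j|}{n^2}\cdot\Omega(p^{-2})\cdot\eps^2\cdot(\eps p)^2 = \tfrac{|W_i||W_j|}{n^2}\,\Omega(\eps^4)$; summing over the $\ge\tfrac12\eps t^2$ window pairs (each of weight $\gg 1/t^2$) gives a per-step gain of $\Omega(\eps^5)$, while $\Phi$ remains below a function of $\eps$ alone. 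Everything outside this core — the block-cutting, the size of $V_0$, and the passage back to a regular partition of all of $V(G)$ — is standard and carries over from the dense case.
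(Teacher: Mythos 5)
The paper does not prove this lemma at all: it is imported verbatim from Scott's paper, so there is no internal argument to compare yours against. Your outline does follow the strategy of that cited proof faithfully: the refinement iteration and bookkeeping, the observation that a pair of density at most $\eps^3 p$ is automatically $(\eps,p)$-regular, the counting fact that pairs of density above $\theta p$ carry total weight below $1/\theta$, and the conclusion that at least $\tfrac12\eps t^2$ of the irregular pairs have scaled density in a fixed window are all correct, and they are exactly the levers the sparse argument needs.

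As a standalone proof, however, the gap sits precisely where you defer to Scott: the construction of $\psi$ satisfying (i)--(iii) simultaneously is the entire content of the lemma, and the phrase ``carry the energy accounting restricted to the window, treating separately the bounded-weight mass that can leak to very-high-density subpairs'' is not yet an argument --- a naive truncation (constant beyond the window) destroys convexity, and with it both the monotonicity of $\Phi$ and the defect inequality you invoke. The clean way to realise your (i)--(iii), in the spirit of the cited proof, is to work with scaled densities $x=d/p$ and take $\psi$ \emph{globally} convex: $\psi(x)=x^2$ up to a corner $C\geq \theta+\eps$ placed above the window, continued linearly with matching slope beyond $C$. Global convexity gives (i); boundedness (iii) comes not from $\psi$ being bounded but from its linear growth combined with the bounded first moment $\sum_{i,j}\frac{|W_i||W_j|}{n^2}\cdot\frac{d_{ij}}{p}\leq 2$, which is the same edge-count fact you already used for the dense pairs; and for (ii) one must check the Bregman (defect) estimate also in the case where the witness sub-density $d'$ jumps past the corner, since strong convexity holds only on the quadratic piece. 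That check does go through: for $d\leq\theta\leq C$ and $d'>C$ the divergence $\psi(d')-\psi(d)-\psi'(d)(d'-d)=(C-d)(2d'-C-d)\geq (C-d)^2\geq\eps^2$, while for $d'\leq C$ it is $(d'-d)^2\geq\eps^2$; only with this case analysis is the claimed per-step gain of $\Omega(\eps^5)$ justified. With that point supplied, your sketch is a correct reconstruction of the quoted result.
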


Lastly, we need a lemma due to Gerke, Kohayakawa, Rödl, and
Steger~\cite{gerke2007small} stating that in an $(\eps,p)$-lower-regular pair,
almost all subsets of size at least $D/p$, for some $D > 0$, inherit
lower-regularity, with slightly weaker parameters.

\begin{lemma}[Corollary 3.8 in~\cite{gerke2007small}]
  \label{lem:small-subsets-regular}
  For all $\eps', \delta \in (0, 1)$, there exist positive constants
  $\eps_0(\eps', \delta)$ and $D(\eps')$ such that the following holds for all
  $0 < \eps \leq \eps_0$ and $p \in (0, 1)$. Suppose $(V_1, V_2)$ is an $(\eps,
  p)$-lower-regular pair and $q_1, q_2 \geq Dp^{-1}$. Then the number of pairs
  $(Q_1, Q_2)$ with $Q_i \subseteq V_i$ and $|Q_i| = q_i$ ($i = 1, 2$) that are
  $(\eps', p)$-lower-regular is at least
  \[
    (1 - \delta^{\min{\{q_1, q_2\}}}) \binom{|V_1|}{q_1} \binom{|V_2|}{q_2}.
  \]
\end{lemma}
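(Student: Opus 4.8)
Since this lemma is quoted verbatim from~\cite{gerke2007small}, I will only outline how its proof goes. Write $w_i=\lceil\eps' q_i\rceil$, and call a pair $(W_1,W_2)$ with $W_i\subseteq V_i$ and $|W_i|=w_i$ a \emph{bad rectangle} if $d(W_1,W_2)<(1-\eps')p$. The first (routine) step is a reduction: averaging $e(W_1',W_2')$ over all size-$(w_1,w_2)$ sub-rectangles of a hypothetical witness shows that a pair $(Q_1,Q_2)$ of sizes $(q_1,q_2)$ fails to be $(\eps',p)$-lower-regular \emph{if and only if} $Q_1\times Q_2$ contains a bad rectangle. A union bound over bad rectangles, together with the binomial identity $\binom{|V_i|}{q_i}^{-1}\binom{|V_i|-w_i}{q_i-w_i}=\binom{|V_i|}{w_i}^{-1}\binom{q_i}{w_i}$ and the estimate $\binom{q_i}{w_i}\le(e/\eps')^{w_i}$, then reduces the statement to the following claim: if $\eps$ is small enough and $D$ large enough in terms of $\eps'$ and $\delta$, then a uniformly random pair $(W_1,W_2)$ of sets of sizes $w_1,w_2$ is a bad rectangle with probability at most $\gamma^{w_1+w_2}$, where $\gamma:=(\eps'/e)\,\delta^{1/\eps'}$. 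The exponent $w_1+w_2$ is exactly what is needed: it absorbs the factor $(e/\eps')^{w_1+w_2}$ coming from the binomials, and since $w_i\ge\eps' q_i$ the resulting bound $\delta^{(w_1+w_2)/\eps'}$ is at most $\delta^{\min\{q_1,q_2\}}$.

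For the probabilistic claim the key structural input is that an $(\eps,p)$-lower-regular pair has \emph{few degree-deficient vertices}: for any fixed $S\subseteq V_2$ with $|S|\ge\eps|V_2|$, the set $\{v\in V_1:\deg(v,S)<(1-\eps'/4)p|S|\}$ has fewer than $\eps|V_1|$ vertices, since otherwise lower-regularity applied to this set together with $S$ would force density at least $(1-\eps)p$, a contradiction when $\eps<\eps'/4$ (and symmetrically with the roles of $V_1,V_2$ exchanged). A deficient set thus has density below the \emph{absolute} constant $\eps$, so a uniformly random $w$-subset over-samples it---meets it in more than, say, a $\sqrt\eps$-fraction of its elements---with probability only $\exp(-\Omega(w))$, with an implied constant that tends to infinity as $\eps\to 0$; crucially this rate does not deteriorate as $p\to 0$. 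To control the edge counts themselves with a $p$-robust rate one passes to \emph{truncated} sums such as $\sum_{v\in W_1}\min(\deg(v,W_2),2pw_1)$, whose summands lie in $[0,2pw_1]$, so that a Chernoff bound yields deviation rates of order $w_i$ rather than of order $pw_i$ (this is what lets one go all the way down to $q_i\gtrsim 1/p$ rather than $q_i\gtrsim|V_i|$). If neither $W_1$ nor $W_2$ over-samples the relevant deficient sets, one peels off the small deficient parts and is left with $d(W_1,W_2)\ge(1-\eps')p$.

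The delicate point, and what I expect to be the main obstacle, is the case where $q_1$ and $q_2$ have very different orders of magnitude, so that both $w_1$ and $w_2$ may be far smaller than $\min\{|V_1|,|V_2|\}$. Then lower-regularity of $(V_1,V_2)$ cannot be applied directly to $W_1$ or $W_2$---they lie below the scale $\eps|V_i|$ at which the hypothesis has content---so the statement that, say, the neighbourhood of $W_1$ in $V_2$ is spread out is \emph{not} automatic; it must instead be established for all but an $\exp(-\Omega(w_1))$-fraction of the choices of $W_1$, and doing so without incurring a ruinous union bound over the exponentially many large subsets of the bigger of $V_1,V_2$ is the technical heart of~\cite{gerke2007small}. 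Once this is in place, the bound $\gamma^{w_1+w_2}$ follows and the reduction above finishes the proof.
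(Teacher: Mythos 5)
The paper offers no proof of this lemma at all: it is imported verbatim as Corollary~3.8 of~\cite{gerke2007small}, so there is no internal argument to compare yours against, and your choice to treat it as a citation with only an outline is exactly what the paper does. Your sketch is a fair account of the external proof's spirit (reduction to small ``bad rectangles'' plus an exponential counting bound, with the union bound over witness sets as the technical heart), the one caveat being that Gerke--Kohayakawa--R\"odl--Steger actually derive this two-sided statement by applying their \emph{one-sided} inheritance theorem twice---so in each application one witness set is a positive fraction of an entire side $V_i$, where lower-regularity of $(V_1,V_2)$ does have content---rather than attacking the small--small rectangle case head-on as your outline suggests.
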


\section{Proof of Theorem~\ref{thm:main-theorem}}\label{sec:the-main-proof}

The proof of Theorem~\ref{thm:main-theorem} involves several different
ingredients. In this section, we gather the necessary definitions and key
lemmas, whose proofs we postpone to the subsequent sections. We then show how
these lemmas can be used to deduce our main result.

Recall, our goal is to show that if $G$ is a $(p, \beta)$-sparse graph with $n$
vertices, minimum degree at least $(1/k + \alpha)np$, and $\beta \leq \eta
np/\log^3 n$, then the vertex set of $G$ can be covered by $k - 1$ cycles. It
turns out that if $G$ is an expander, then it has all kinds of good connectivity
properties that make it easier to cover it by cycles. However, it is not hard to
see that if $k > 2$, then $G$ need not even be connected; in particular, it need
not be a good expander.

The first step of the proof deals with this problem. For a graph $G$ and $\xi
\geq 0$, let us write $\delta_\xi(G)$ for the maximal integer $d$ such that all
but at most $\xi v(G)$ vertices $v$ of $G$ satisfy $\deg_G(v) \geq d$. Note that
$\delta_0(G)$ is simply the minimum degree $\delta(G)$ of $G$. On the other
hand, $\delta_\xi(G)$ for a small constant $\xi > 0$ is a sort of `essential
minimum degree' of $G$ possessed by a $(1-\xi)$-fraction of the vertices.

\begin{restatable}[Partitioning Lemma]{lemma}{partitioning}
  \label{lem:partitioning-lemma}
  For all $c, \alpha, \xi \in (0, 1)$, there exist positive $\gamma(\alpha, \xi,
  c)$ and $\eta(\alpha, \xi, c)$ such that the following holds for all
  sufficiently large $n$. Let $p \in (0, 1)$ and $\beta \leq \eta np$, and let
  $G$ be a $(p, \beta)$-sparse graph on $n$ vertices with minimum degree at
  least $(c + \alpha)np$. Then there exists a partition $V(G) = V_1 \cup \dotsb
  \cup V_\ell$ into $1 \leq \ell < 1/c$ parts such that, for every $i \in
  [\ell]$,
  \begin{enumerate}[(i)]
    \item $G[V_i]$ is a $\gamma p$-expander,
    \item $\delta(G[V_i]) \geq c^2 np$,
    \item $\delta_\xi(G[V_i]) \geq (c + \alpha - \xi)np$.
  \end{enumerate}
\end{restatable}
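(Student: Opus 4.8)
The plan is to build the partition by iterated refinement along sparse cuts and then to clean up the pieces that turn out too small. First a harmless reduction: we may assume $c+\alpha\le \tfrac12$, since otherwise for any cut $V(G)=V_1\cup V_2$ with $|V_1|\le|V_2|$ one has, by the minimum-degree hypothesis and $(p,\beta)$-sparseness,
\[
  e_G(V_1,V_2)\ \ge\ (c+\alpha)np\,|V_1|-2e_G(V_1)\ \ge\ |V_1|\,pn\bigl((c+\alpha)-\tfrac12-\eta\bigr),
\]
so that, for $\eta$ small, $G$ itself is a $\gamma p$-expander and $\ell=1$ works. Fix small constants $\gamma=\gamma(\alpha,\xi,c)$ and $\eta\ll\gamma$ (and tacitly let the refinement produce $(\gamma/C)p$-expanders for an absolute constant $C$, renaming at the end). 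Now run the obvious refinement: starting from $\{V(G)\}$, while some part $W$ fails to be a $\gamma p$-expander, fix a partition $W=X\cup Y$ into non-empty sets with $e_G(X,Y)<\gamma p|X||Y|$ and replace $W$ by $X$ and $Y$. Each step strictly refines the partition and singletons are vacuously $\gamma p$-expanders, so this ends with a partition $V(G)=W_1\cup\dots\cup W_m$ into $\gamma p$-expanders, which is property~(i); keep track of the binary tree of splits.

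\textbf{Telescoping and a size dichotomy.} This is the quantitative heart. Fix a part $W_i$ and let $V(G)=W^{(0)}\supseteq\dots\supseteq W^{(r)}=W_i$ be the branch producing it, with $W^{(j)}=W^{(j+1)}\cup R^{(j)}$ and $e_G(W^{(j+1)},R^{(j)})<\gamma p|W^{(j+1)}||R^{(j)}|$. Since $V(G)\setminus W_i$ is the disjoint union of the $R^{(j)}$ and $W_i\subseteq W^{(j+1)}$ for every $j$, telescoping gives $e_G\bigl(W_i,V(G)\setminus W_i\bigr)\le\sum_j e_G(W^{(j+1)},R^{(j)})<\gamma p\sum_j|W^{(j+1)}||R^{(j)}|\le\gamma p\,n\,(n-|W_i|)$. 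Plugging this into $2e_G(W_i)\ge(c+\alpha)np|W_i|-e_G(W_i,V(G)\setminus W_i)$ together with $2e_G(W_i)\le p|W_i|^2+\beta|W_i|\le p|W_i|^2+\eta np|W_i|$, and writing $x_i=|W_i|/n$, we get $x_i^2-(c+\alpha-\eta)x_i+\gamma>0$. For $\gamma$ small this forces $x_i<2\gamma/c$ or $x_i>c+\alpha-\sqrt\gamma$; call $W_i$ \emph{small} or \emph{big} accordingly. In particular there are at most $\lfloor(c+\alpha-\sqrt\gamma)^{-1}\rfloor<1/c$ big parts. Moreover the displayed edge bound plus Markov's inequality show that for a big part $W_i$ all but $\gamma n/(\xi) \le \xi|W_i|$ of its vertices $v$ satisfy $\deg_G(v,V(G)\setminus W_i)\le \xi np$, hence $\deg_G(v,W_i)\ge(c+\alpha-\xi)np\ge c^2np$; this gives (iii) for the big parts, and (ii) for the big parts up to at most $O(\gamma n)$ exceptional ("leaky") vertices, which the cleanup below must still absorb.

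\textbf{Cleanup of the small parts.} Let $T$ be the union of the small parts; summing the external-edge bound over the big parts gives $e_G(T,L)<(\gamma/c)pn^2$, where $L$ is the union of the big parts, so almost every vertex of $T$ has almost all its neighbours inside $T$. The same telescoping/sparseness computation applied to $T$ shows $|T|<O(\gamma n)$ \emph{or} $|T|>(c-o_\gamma(1))n$; in the first case we redistribute the vertices of $T$ (and the $O(\gamma n)$ leaky vertices) among the big parts, in the second case $G[T]$ has large essential minimum degree and we recurse (after a similar cleanup) on $G[T]$, the recursion terminating because the vertex count strictly drops while at each stage fewer than $1/c$ big blocks appear. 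In either case the redistribution is controlled by the elementary fact that if $G[W]$ is a $\gamma_0p$-expander and $S$ is a set of vertices each with at least $c^2np$ neighbours in $W$, then $G[W\cup S]$ is a $(\gamma_0/8)p$-expander provided $|S|\le\tfrac{c^2}{4}p|W|$ — and one places each redistributed vertex in the big part in which it has the most neighbours, which (as the final partition has fewer than $1/c$ parts) is at least $\deg_G(v)/\ell>c^2np$, yielding (ii). A final analogous Markov estimate transfers (iii) to the enlarged parts. \textbf{The main obstacle} is exactly this last step: one must bound the number of vertices that need redistributing in terms of $p|W|$ rather than merely $n$, since the robustness of the expander property under vertex moves scales with $p|W|$ and $p$ may be tiny; carrying this out forces the recursive structure above and a careful budgeting of the expansion constant across the (boundedly many) merges.
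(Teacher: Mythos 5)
Your refinement-plus-telescoping argument is a genuinely different route from the paper's (which never splits all the way down: it iterates via ``good/perfect'' partitions, Lemma~\ref{lem:perfect-partition}, preserving a minimum degree at every split by shunting high-cross-degree vertices into an exceptional class), and its first half is sound: the telescoped cut bound, the quadratic dichotomy, the bound of fewer than $1/c$ big parts, and property~(iii) for all but $O(\gamma n)$ vertices of each big part all check out. The genuine gaps are in the cleanup, which is where the real difficulty of the lemma lives. First, your ``elementary fact'' that expansion survives adding $|S|\le (c^2/4)p|W|$ vertices is both unproved and far too weak for the task: you must absorb $\Theta(\gamma n)$ vertices while $p|W|=O(pn)\ll n$ whenever $p=o(1)$, and you yourself flag this as the unresolved ``main obstacle''; the recursion does not shrink the set to be absorbed. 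In fact the obstacle is an artefact of a miscalibrated robustness statement: the correct one (implicitly used in the paper) tolerates a small \emph{linear} fraction of added vertices, because any cut with a side of size at most $c^2n/2$ is handled by the minimum degree $c^2np$ together with Lemma~\ref{lem:edges-out-small-set} (no expansion of the core needed), while balanced cuts meet the original core in at least half of each side. But to run that argument every vertex of the enlarged part, including the old ones, must have degree at least $c^2np$ inside it --- and your big parts come with no such guarantee (expansion only gives $\approx\gamma p|W_i|\ll c^2np$).

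This leads to the second, more serious gap: up to $\gamma n/\xi$ ``leaky'' vertices per big part must be extracted and rehomed, and extracting them lowers the internal degrees of the remaining vertices --- a surviving vertex can have a constant fraction of its $\approx np$ neighbours inside the extracted set --- so further vertices can drop below $c^2np$ and the extraction cascades. Controlling this cascade is precisely the technical heart of the paper's proof (the iteratively defined sets $W_X^j$ in the proof of Lemma~\ref{lem:perfect-partition}, whose total size is bounded by a sparseness contradiction), and your proposal does not address it at all. Moreover, your rehoming rule --- ``place $v$ in the big part with most neighbours, at least $\deg_G(v)/\ell>c^2np$'' --- is false as stated, since a relocated vertex may have most of its neighbours inside $T$ or among the other relocated/leaky vertices; the paper repairs exactly this by ordering the exceptional vertices via Lemma~\ref{lem:edges-out-small-set} and assigning them greedily, counting previously assigned vertices towards the degree. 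Finally, the branch ``$|T|>(c-o_\gamma(1))n$, recurse on $G[T]$'' is not legitimate as written, because $G[T]$ need not satisfy the minimum-degree hypothesis of the lemma (only an essential one, which is what the lemma is supposed to produce, not consume); in fact this branch is vacuous --- comparing the external degree sum of the small parts with the total cut weight $\le\gamma p\binom{n}{2}$ removed by the refinement forces $|T|=O(\gamma n)$ always --- but your write-up neither proves this nor justifies the recursion, and the per-level part-count bookkeeping is left open.
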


The proof of this lemma is given in Section \ref{sec:partitioning-lemma}.
Informally, it says that if $G$ has minimum degree slightly above $cnp$, then it
can be partitioned into fewer than $1/c$ good expanders, each of which
\emph{essentially} still has the same minimum degree as $G$ (and each of which
has minimum degree at least $c^2np$). This means that from now on, instead of
working with an arbitrary $(p, \beta)$-sparse graph with minimum degree
$\delta(G) \geq (1/k + \alpha)np$, we can work with a $(p, \beta)$-sparse
\emph{expander graph} satisfying $\delta_\xi (G)\geq (1/k + \alpha - \xi)np$,
where $\xi$ is an arbitrarily small positive constant. Luckily for us, the fact
that we go from a graph with $\delta(G) \geq (c + \alpha)np$ to a graph with
$\delta_\xi(G) \geq (c + \alpha - \xi)np$ does not pose any insurmountable
difficulty. Theorem~\ref{thm:main-theorem} now follows easily from the following
`robust' version specialised to expander graphs.

\begin{theorem}\label{thm:main-expander}
  For every integer $k \geq 2$ and all $\alpha, \gamma \in (0, 1)$, there exists
  a positive $\eta(\alpha, \gamma, k)$, such that the following holds for all
  sufficiently large $n$. Let $p \in (0, 1]$ and $\beta \leq \eta np/\log^3 n$.
  Then every $(p, \beta)$-sparse $\gamma p$-expander $G$ on $n$ vertices
  satisfying
  \[
    \delta(G) \geq 2\alpha np \quad \text{and} \quad \delta_{\alpha/128}(G) \geq
    (1/k + \alpha)np
  \]
  has a vertex cover by $k - 1$ cycles.
\end{theorem}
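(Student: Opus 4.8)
The plan is to implement the absorbing strategy outlined in the introduction. Fix a two-colouring $V(G) = U_1 \cup U_2$ witnessing that $G$ is (a subgraph of) a bipartite expander via Lemma~\ref{lem:expander-to-bipartite-expander}; from now on think of $G$ as bipartite with parts of roughly equal size (the expansion forces near-balance), so every cycle we build alternates between $U_1$ and $U_2$. We want to cover $V(G)$ by $k-1$ cycles $C_1, \dots, C_{k-1}$. The first step is to build, for each $j \in [k-1]$, an \emph{absorbing structure} $A_j$: a bipartite graph on a small vertex set $V(A_j) \subseteq V(G) \setminus R$ together with two designated \emph{link vertices} (one in each part), such that for every subset $W \subseteq R$ with $|W \cap U_1| = |W \cap U_2|$ and $|W|$ at most the capacity of $A_j$, the graph $G[V(A_j) \cup W]$ contains a spanning path between the two link vertices. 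Here $R$ is a small reservoir, chosen uniformly at random of size $\Theta(\eta' n)$ for a suitable small $\eta'$; a routine first-moment / concentration argument using $(p,\beta)$-sparseness and $\delta_{\alpha/128}(G) \ge (1/k+\alpha)np$ shows that w.h.p.\ $R$ inherits the degree and sparseness conditions (with slightly worse constants), and that $R$ is split evenly between $U_1$ and $U_2$.

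The construction of the $A_j$ follows Montgomery's template for bipartite absorbers~\cite{montgomery2019spanning}, absorbing two vertices (one per part) at a time: one takes a constant-degree bipartite ``absorbing gadget'' that can either route through or swallow a prescribed pair $(r, r') \in (R\cap U_1)\times(R\cap U_2)$, and chains many such gadgets together along a long path, using the \textbf{Connecting Lemma} (the expander connectivity lemma stated/proved later in the paper) to realise each gadget and each link as a short path in $G[V(G)\setminus R]$ avoiding previously used vertices. The reason the Connecting Lemma applies is precisely the $\gamma p$-expansion together with $\delta(G) \ge 2\alpha np$: these guarantee that any two vertices (or edges) can be joined by a path of length $O(\log n / \log(\gamma np))$ through any set that still occupies a $(1-o(1))$-fraction of each part, and there are only $O_k(n^{o(1)})$ such connections to make, so the vertices consumed form a $o(n)$-fraction. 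Set aside one absorber $A_j$ per cycle, with total capacity comfortably exceeding $|R|$.

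Next, let $G' = G[V(G) \setminus (R \cup \bigcup_j V(A_j))]$. We now cover $V(G')$ by $k-1$ systems of vertex-disjoint paths, the $j$-th system having prescribed endpoints matching the free ends of $A_j$, with $|U_1 \cap V(G')| - |U_2 \cap V(G')|$ balanced against $A_j$'s absorbing bias so that the leftover in $R$ is balanced between the parts. This is the step where we invoke the \textbf{Embedding Lemma} (the sparse-regularity + Nenadov--Škorić bootstrapping argument referenced in the introduction): apply Lemma~\ref{lem:sparse-regularity-lemma} to $G'$, pass to the reduced graph, use Lemma~\ref{lem:small-subsets-regular} to ensure inherited lower-regularity survives on the relevant clusters, and extract an almost-spanning collection of long paths; the $\delta_{\alpha/128}$ condition (rather than full minimum degree) is exactly why we need a regularity argument robust to a small exceptional set of low-degree vertices. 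Finally, for each $j$ we glue the $j$-th path system onto $A_j$ through its link vertices — using the Connecting Lemma once more to bridge any small gaps — obtaining a single $C_j$-path; then we invoke the absorbing property of $A_j$ to swallow exactly the leftover vertices of $R$ assigned to it, closing each path into a cycle. Since the $A_j$ collectively absorb all of $R$ and the path systems covered all of $G'$, the cycles $C_1, \dots, C_{k-1}$ cover $V(G)$.

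The main obstacle, and the part requiring the most care, is the interface between the bipartite constraint and the counting: absorbers of the Montgomery type only absorb sets balanced across the bipartition, so one must track, through the reservoir choice, the absorber capacities, and the path-system endpoints, that every leftover set actually handed to an absorber is balanced — any parity or size mismatch sinks the argument. A secondary difficulty is bookkeeping the $o(1)$ error terms: the $\log^3 n$ in the hypothesis $\beta \le \eta np / \log^3 n$ is tight because the Connecting Lemma needs paths of length $\Theta(\log n)$ and the sparse-regularity machinery loses polylogarithmic factors, so one must be careful that the total vertex usage of all connections and all exceptional regularity classes stays below, say, $|R|$, leaving room for the absorbers to operate. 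Everything else — the reservoir concentration, the gadget construction, the final gluing — is routine given the Partitioning, Connecting, and Embedding Lemmas.
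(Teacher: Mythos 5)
Your overall architecture (bipartition via Lemma~\ref{lem:expander-to-bipartite-expander}, Montgomery-style two-vertex absorbers built with the Connecting Lemma, path forests from the Embedding Lemma, gluing through a reservoir and absorbing the leftover) matches the paper's strategy, but there is a genuine gap at exactly the point you flag as ``the main obstacle'': you never give a mechanism that makes the leftover of the reservoir balanced across the bipartition, and the fix you gesture at cannot work. The imbalance is not created by the sizes of $U_1\cap V(G')$ and $U_2\cap V(G')$ or by the reservoir split (which a priori balancing could control); it is created by the connections themselves. When $k\geq 3$ the graph may genuinely be bipartite with parts $A,B$, so each path-forest endpoint has neighbours in only one of $R\cap A$, $R\cap B$, and you cannot control which: in the worst case all $\Theta(n/\log^3 n)$ endpoints attach only to $R\cap A$, and then every connecting path through the bipartite reservoir uses one more vertex of $R\cap A$ than of $R\cap B$, producing an imbalance of order $n/\log^3 n$ that is only revealed \emph{after} the Embedding Lemma is applied. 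Bipartite absorbers can only swallow balanced sets (as the paper notes, a single-vertex absorber forces an odd cycle), so no choice of absorber capacities or reservoir proportions repairs this. The paper's proof resolves it with an extra idea your proposal lacks: it reserves additional random sets $Q$ and $Y$, shows (Claim~\ref{cl:main-k-fix-balance}) that there are many vertices $Q_A$ with large degree into $U\cap A$ and many vertices $Q_B$ with large degree into $U\cap B$, strings $Q_A\cup Q_B$ into one path $P_Q$ through $Y$, places $P_Q$ on the first cycle, and then \emph{reuses} vertices of $P_Q$ as pivots in the connection requests for the remaining cycles, inserting them greedily so that the multiset of pairs handed to the Connecting Lemma has exactly as many $A$-pairs as $B$-pairs; this reuse across cycles is legitimate for a cover and is precisely where $k\geq 3$ gives the needed slack.

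Two secondary problems. First, ``from now on think of $G$ as bipartite'' is not harmless: if $G$ is not bipartite, the spanning bipartite expander $F$ only retains minimum degree of order $\gamma np$, far below the essential degree $(1/k+\alpha)np$ that the Embedding Lemma needs, so the path forests must be found in $G$ itself, with $F$ used only for the absorber and for the connections through the reservoir (this is also why your claim that expansion forces the parts to be nearly balanced, and that all cycles alternate between the parts, is neither true nor needed). Second, for $k=2$ the paper deliberately does \emph{not} use the global bipartition of Lemma~\ref{lem:expander-to-bipartite-expander}: it splits the reservoir randomly so that every vertex has large degree into both halves, which is what lets one equalise $A$-pairs and $B$-pairs by fiat; your uniform treatment of all $k$ would either need that case split or a verification that the global-bipartition route also works for $k=2$, where no $P_Q$-style reuse is available because there is only one cycle.
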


\begin{proof}[Proof of Theorem~\ref{thm:main-theorem}]
  Without loss of generality we may assume that $\alpha > 0$ is small enough, in
  particular smaller than $1/(2k^2)$. Let $\xi = \alpha/129$, $\alpha' = \alpha
  - \xi$, $\gamma = \gamma_{\ref{lem:partitioning-lemma}}(\alpha, \xi, 1/k)$,
  $\eta' = \min_{2 \leq i \leq k}{\{ \eta_{\ref{thm:main-expander}}(\alpha',
  \gamma, i) \}}$, and $\eta = \min{\{ \alpha\eta'/2,
  \eta_{\ref{lem:partitioning-lemma}}(\alpha, \xi, 1/k) \}}$. Let $G = (V, E)$
  be a $(p, \beta)$-sparse graph with $n$ vertices, minimum degree $(1/k +
  \alpha)np$, and $\beta \leq \eta np/\log^3 n$. We apply
  Lemma~\ref{lem:partitioning-lemma} with $1/k$ (as $c$) to obtain some $1 \leq
  \ell < k$ and a partition $V = V_1 \cup \dotsb \cup V_\ell$ such that, for
  every $i \in [\ell]$,
  \begin{enumerate}[(i), font=\itshape]
    \item $G[V_i]$ is a $\gamma p$-expander,
    \item\label{s1} $\delta(G[V_i]) \geq np/k^2 \geq 2\alpha np$,
    \item $\delta_{\xi}(G[V_i]) \geq (1/k + \alpha - \xi)np$.
  \end{enumerate}

  Let $n_i := |V_i|$ and $k_i := \ceil{k \cdot n_i/n}$, for every $i \in
  [\ell]$. Then, by our choice of constants,
  \[
    \delta_{\alpha'/128}(G[V_i]) = \delta_{\xi}(G[V_i]) \geq (1/k + \alpha')np
    \geq (1/k_i + \alpha')n_ip.
  \]
  Next, $\ref{s1}$ and the fact that $G$ is
  $(p, \beta)$-sparse imply that
  \[
    \alpha |V_i|np \leq e(V_i, V_i) \leq |V_i|^2p + \beta|V_i| = |V_i|(|V_i|p +
    \beta),
  \]
  from which we deduce, using $\beta \leq \eta np/\log^3 n$ for large enough
  $n$, that $n_i \geq (\alpha/2)n$. In particular, we have $\beta \leq \eta
  np/\log^3 n \leq \eta'n_ip/\log^3 n_i$, by our choice of $\eta$. Since
  $G[V_i]$ is a subgraph of $G$, it is clearly also $(p, \beta)$-sparse.

  It now follows from Theorem~\ref{thm:main-expander} applied with $\alpha'$ (as
  $\alpha$) that each graph $G[V_i]$ can be covered by $k_i - 1$ cycles. Since
  \[
    \sum_{i = 1}^{\ell} (k_i - 1) = \sum_{i = 1}^{\ell} \ceil{k \cdot n_i/n} - 1
    < \sum_{i = 1}^{\ell} k \cdot n_i/n = k,
  \]
  this shows that also $G$ can be covered by $k - 1$ cycles, as required.
\end{proof}

It thus remains to prove Theorem~\ref{thm:main-expander}. As one might expect,
the proof makes heavy use of the fact that the graph $G$ is a $\gamma
p$-expander. The main ingredient is the Connecting Lemma which allows us to
connect many given pairs of vertices in $G$ using short disjoint paths. In order
to make this precise, we start with the following definition.

\begin{definition}[$(M, W, \ell)$-matching]
  Let $G$ be a graph and let $W \subseteq V(G)$ be a subset of the vertices. Let
  $M$ be a multigraph with $V(M) \subseteq V(G) \setminus W$. Then an \emph{$(M,
  W, \ell)$-matching} in $G$ is a collection $\{P_e : e \in E(M)\}$ of
  internally vertex-disjoint paths in $G$ where for every edge $e = \{u, v\} \in
  E(M)$, the path $P_e$ is a $uv$-path of length at most $\ell$ whose internal
  vertices all lie in the set $W$.
\end{definition}

The multigraph $M$ can be thought of as prescribing which pairs of vertices of
$G$ ought to be connected by how many paths; an $(M, W, \ell)$-matching is then
a system of internally vertex-disjoint paths of length at most $\ell$ connecting
the prescribed vertices in the prescribed manner, such that the internal
vertices of the paths all lie in the set $W$. The proof of the Connecting Lemma
is deferred to Section~\ref{sec:connecting-lemma}.

\begin{restatable}[Connecting Lemma]{lemma}{connecting}
  \label{lem:connecting-lemma}
  For every $\gamma \in (0, 1)$ and $\Delta > 0$, there exists a positive
  $C(\gamma, \Delta)$ such that the following holds for all sufficiently large
  $n$. Let $p \in (0, 1)$ and $\beta > 0$, let $G$ be a $(p, \beta)$-sparse
  graph on $n$ vertices, and let $U, W \subseteq V(G)$ be disjoint subsets such
  that:
  \begin{enumerate}[(i)]
    \item\label{cl-W-exp} $G[W]$ is a $\gamma p$-expander,
    \item\label{cl-W-size} $|W| \geq C\beta\sqrt{\log n}/p$, and
    \item\label{cl-U-deg} every vertex $u \in U$ satisfies $\deg(u, W) \geq
      \gamma |W|p$.
  \end{enumerate}
  Let $\ell = \ceil{30\log n/(\gamma\log\log n)}$. Then for every multigraph $M$
  with $V(M) \subseteq U$, at most $|W|/(C\ell)$ edges, and maximum degree at
  most $\Delta$, there exists an $(M, W, \ell)$-matching in $G$.
\end{restatable}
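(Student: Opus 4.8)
The plan is to build the $(M,W,\ell)$-matching greedily, edge by edge: process the edges $e_1, e_2, \ldots$ of $M$ one at a time, and for each $e_i = \{u,v\}$ find a short $uv$-path through $W$ avoiding the (few) vertices already used by the paths constructed so far. The key point is that since $M$ has at most $|W|/(C\ell)$ edges and each path has length at most $\ell$, at every stage the set $W'$ of already-used internal vertices satisfies $|W'| \leq |W|/C$, so it is a tiny fraction of $W$. Hence it suffices to prove a \emph{single-pair} statement: given $u,v$ with $\deg(u,W\setminus W'), \deg(v,W\setminus W') \geq (\gamma - o(1))|W|p$ and $G[W\setminus W']$ still a good-enough expander, there is a $uv$-path of length $\leq \ell$ with internal vertices in $W\setminus W'$. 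Removing a set of size $|W|/C$ from the expander $G[W]$ only mildly degrades its expansion (this needs the $(p,\beta)$-sparseness together with condition~\ref{cl-W-size}, to control $e(W', W\setminus W')$), so after updating constants we may just as well prove the single-pair statement in $G[W]$ itself with the stated degree bounds into $W$.

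For the single-pair statement, the standard approach is a breadth-first search / neighbourhood-expansion argument. Starting from the neighbourhood $N(u,W)$, which has size $\geq \gamma|W|p/2$ say, I would repeatedly expand: if $S\subseteq W$ is the current reachable set and $|S| \leq |W|/2$, then applying the $\gamma p$-expander property to the partition $(S, W\setminus S)$ gives $e(S, W\setminus S) \geq \gamma|S||W\setminus S|p \geq \tfrac12 \gamma|S||W|p$, so the external neighbourhood $N(S, W\setminus S)$ has size $\geq \tfrac12\gamma|S||W|p / (\text{max degree into } W)$. Using the $(p,\beta)$-sparseness bound $e(S, W) \leq p|S||W| + \beta\sqrt{|S||W|}$ and $\beta = o(np/\log^3 n)$ (so in particular $\beta \ll |W|p$ once $|W|$ is polynomial in $n$, which is forced by~\ref{cl-W-size}), one gets $|N(S, W\setminus S)| \geq c|S|\log\log n$ for some $c = c(\gamma) > 0$, as long as $|S|$ is not already of order $|W|p / \log^3 n$ or larger — i.e. the reachable set grows by a factor $\Omega(\log\log n)$ per step. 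After $O(\log n / \log\log n)$ steps the reachable set from $u$ through $W$ has size at least, say, $2|W|/3$; the same is true from $v$; two such sets of size $> |W|/2$ intersect, yielding a $uv$-path of length $\ell = \lceil 30\log n/(\gamma\log\log n)\rceil$ (choosing the constant $30$ generously to absorb the two halves and the small loss from the BFS not starting from a single vertex). One must take care that the two BFS trees, together with the forbidden set $W'$, are all disjoint; since each has only $O(|W|)$ vertices total this is not an issue — in fact the expansion at each step gives far more room than needed.

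The main obstacle is making the neighbourhood-expansion step quantitatively robust enough that it survives $\Theta(\log n/\log\log n)$ iterations with the path length bounded by $\ell$. Two things are delicate: first, the expansion factor per step must be $\omega(1)$ — here $\Omega(\log\log n)$ — which is exactly why the final path length is $\Theta(\log n/\log\log n)$ rather than $\Theta(\log n)$, and extracting a $\log\log n$ factor from the $\gamma p$-expander condition requires comparing $e(S, W\setminus S) \geq \gamma|S||W|p/2$ against the sparseness upper bound on the \emph{number of edges} landing in a set of a given size, so that a small target set would be over-saturated. Concretely, if $|T| = |N(S, W\setminus S)|$ then $\gamma|S||W|p/2 \leq e(S,T) \leq p|S||T| + \beta\sqrt{|S||T||W|}$... (using $|S|,|T|\le|W|$), and provided $|T| \le \gamma|W|/8$ and $\beta\sqrt{|W|} \le \gamma|W|p/8$ — the latter guaranteed by~\ref{cl-W-size} with $C$ large — this forces $|T| \ge \gamma|S||W|/(8|S|) \cdot (\text{something})$; the clean way is to bound $|T|$ from below by $\gamma|W|p|S| / (4 \deg_{\max})$ and then separately argue $\deg_{\max}(\cdot, W) \le |W|p + \beta\sqrt{|W|/|S|} \cdot \text{stuff}$, which is $O(|W|p)$ for $|S|$ not too small — giving $|T| = \Omega(\gamma|S|)$, a constant-factor expansion, and the $\log\log n$ gain then comes from iterating a constant-factor expansion $\Theta(\log n/\log\log n)$ times only if one is more careful. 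The honest resolution, and the crux of the proof, is that one does \emph{not} need $\log\log n$ expansion from the global $\gamma p$-expander property alone; rather, one combines it with Lemma~\ref{lem:edges-out-small-set}-style local expansion for small sets (where the degree condition $\deg(u,W) \ge \gamma|W|p$ forces genuine $\Omega(np)$-expansion, hence $\Omega(\log^3 n)$-factor growth while $|S|$ is small) to blow $S$ up to linear size in $O(\log n/\log\log n)$ steps, and then uses the $\gamma p$-expander property only for the final constant number of doubling steps to reach $2|W|/3$. Balancing these two regimes — the "small set, huge expansion" regime driven by sparseness and the degree hypothesis, and the "linear set, constant expansion" regime driven by the expander hypothesis — so that the total number of steps is $O(\log n/\log\log n)$, is the heart of the argument; second, the disjointness bookkeeping across the $|E(M)|$ greedy rounds, which the bound $|E(M)| \le |W|/(C\ell)$ is precisely designed to handle.
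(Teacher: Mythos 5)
There is a genuine gap, and it sits at the very first step of your plan: the reduction of the lemma to a ``single-pair'' statement via a greedy, edge-by-edge construction. You argue that after building some paths the used set $W'$ has size at most $e(M)\ell \leq |W|/C$, hence is a ``tiny fraction of $W$'', and conclude that an endpoint $u$ of a later pair still satisfies $\deg(u, W\setminus W') \geq (\gamma - o(1))|W|p$. This is quantitatively false: tiny \emph{relative to $|W|$} is not the relevant comparison. The degree guarantee is only $\gamma|W|p$, and since $p$ may be as small as $\operatorname{polylog}(n)/n$ while $|W'|$ may be as large as $|W|/C$, we can have $|W'| \gg \gamma|W|p$; nothing prevents the earlier paths from having consumed \emph{all} of $N(u,W)$, at which point your single-pair lemma cannot even be invoked (its degree hypothesis fails), let alone proved. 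The same problem infects the claim that $G[W\setminus W']$ inherits expansion: removing $|W|/C$ vertices preserves the expander condition only for partition classes that are not too small, and in particular gives no lower bound on minimum degree, which is exactly what a BFS started at a single vertex needs. This is not a technicality one can patch by adjusting constants; it is the reason the paper does \emph{not} argue greedily. Instead, the paper encodes the problem as an $\ell$-uniform hypergraph matching and applies Haxell's criterion (Theorem~\ref{thm:haxell-matching}): there one must only exhibit, for every subfamily $S\subseteq E(M)$ and every blocked set $Z\subseteq W$ of size at most roughly $2\ell|S|$, \emph{some} pair in $S$ that can be connected avoiding $Z$. Because the blocked set scales with $|S|$ and one has $|S|/(2\Delta)$ disjoint candidate pairs to choose from, a majority/averaging argument suffices, and the auxiliary Lemma~\ref{lem:resilient-connecting} supplies it.

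Your difficulties in the expansion analysis are also symptomatic of a second, related issue that the paper's Lemma~\ref{lem:resilient-connecting} is specifically designed to solve: even with the degree hypothesis intact, a \emph{single} vertex $x$ has only about $\gamma|W|p$ neighbours in $W$, which can be smaller than the forbidden set (of size up to $\min\{\gamma|W|/20,\,|U|\log n\}$), so one cannot start a BFS from one vertex and expect it to survive. The paper circumvents this by proving expansion for \emph{sets} of starting vertices (Claim~\ref{cl:many-neighbours}, which gives growth by a factor $\log n$ for small sets and a jump to $\Omega(\gamma|W|)$ otherwise) and then extracting a single good vertex through a chain of subsets $U_0 \supseteq U_1 \supseteq \dotsb$ whose sizes shrink by a $\log n$ factor per stage while the reach stays above $|W|/2$; the total radius is $O(\log n/\log\log n)$ because each stage costs only $O(1/\gamma)$ extra steps. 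Your proposal has neither the Haxell-type global mechanism nor this set-to-vertex bootstrapping, so as written the approach does not go through.
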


One important technical property that we make use of is that most not too small
subsets of a good expander again induce good expanders, albeit with a slightly
weaker parameter.

\begin{restatable}[Inheritance Lemma]{lemma}{inheritance}
  \label{lem:inheritance-lemma}
  For every $\gamma \in (0, 1)$, there exist positive $\gamma_1(\gamma)$ and
  $C(\gamma)$ such that the following holds for all sufficiently large $n$. Let
  $p \in (0, 1)$ and $\beta > 0$, let $G$ be a $(p, \beta)$-sparse $\gamma
  p$-expander on $n$ vertices, and let $r$ be a positive integer such that $rp
  \geq C \cdot \max{\{ \log n, \beta \}}$. Then the number of subsets $R
  \subseteq V(G)$ of size $r$ for which $G[R]$ is a $\gamma_1p$-expander is at
  least $(1 - n^{-1}) \binom{n}{r}$.
\end{restatable}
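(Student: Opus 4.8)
The plan is to use a first-moment (union bound) argument over all $r$-subsets $R$, combined with the expansion guarantee of $G$ and the $(p,\beta)$-sparseness upper bound on edge counts, to show that a uniformly random $r$-subset fails to induce a $\gamma_1 p$-expander only with probability $o(n^{-1})$. Concretely, call a partition $R = X \cup Y$ \emph{bad} if $e_G(X,Y) < \gamma_1 p |X||Y|$; we must bound the number of $R$ that admit a bad partition. It is cleaner to first fix a partition $V(G) = A \cup B$ of the \emph{whole} vertex set and ask for the probability that $R \cap A$ plays the role of $X$ and $R \cap B$ plays the role of $Y$ with $e_G(R\cap A, R\cap B)$ abnormally small. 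Since $G$ is a $\gamma p$-expander, $e_G(A,B) \ge \gamma p |A||B|$, so in expectation $e_G(R\cap A, R\cap B)$ is of order $\gamma p \cdot (r/n)^2 |A||B|$, which is comfortably large whenever both $|R\cap A|$ and $|R\cap B|$ are of order $rp^{-1}$ or bigger (recall $rp \ge C\max\{\log n, \beta\}$). The deviation $e_G(R\cap A, R\cap B)$ being a sum over a random subset should be controlled by a concentration inequality for sampling without replacement (a Chernoff- or Azuma-type bound, or a direct second-moment estimate), giving failure probability at most, say, $e^{-c rp}$ for the fixed partition $(A,B)$ restricted to a given pair of sizes $(|R\cap A|, |R\cap B|)$.

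The first step, then, is to set up this concentration bound. The second step is the union bound: there are at most $2^r$ partitions of $R$, but rather than enumerate partitions of $R$ directly I would enumerate over the induced partition of $V(G)$ — more carefully, one fixes the pair of sets $(X,Y)$ directly as a choice of two disjoint subsets of $V(G)$ of prescribed sizes $a$ and $b$ with $a+b = r$, of which there are $\binom{n}{a}\binom{n-a}{b} \le \binom{n}{r} 2^r$, and then the number of $r$-sets $R$ containing a fixed such bad pair as $R = X \cup Y$ is exactly $1$. So the number of $R$ with a bad partition into parts of sizes $(a,b)$ is at most (number of bad pairs of those sizes), and summing over all $a+b=r$ and dividing by $\binom{n}{r}$ we need the total to be $o(n^{-1})$. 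The small cases need separate handling: when $\min\{a,b\} < D' p^{-1}$ for a suitable constant $D'$, the expansion of $G$ together with Lemma~\ref{lem:edges-out-small-set} (or rather the expander property directly, since any vertex of $R$ in the smaller part has $\ge \gamma p|{\text{larger part of }V}|$ neighbours, of which a $(r/n)$-fraction fall in $R$ in expectation) still forces many crossing edges; here one argues that a single vertex's neighbourhood inside $R$ is already large with high probability, so the "smaller part is nonempty but $R$ still has few crossing edges" event is also rare. The failure probabilities $e^{-c rp}$ beat the entropy factor $2^{2r}$ from choosing the partition precisely because $rp \ge C\log n$ with $C$ large forces $e^{-crp} \le n^{-crp/\log n} \cdot e^{-cr}$, and choosing $C$ large enough in terms of $\gamma$ and $\gamma_1$ makes the product over all size splits summable to below $n^{-1}$.

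The main obstacle I expect is getting the concentration estimate for $e_G(R\cap A, R\cap B)$ sharp enough to survive the $2^{2r}$-sized union bound uniformly in the split sizes, \emph{including} the regime where one of the two parts is tiny (constant, or of size $\Theta(p^{-1})$). In that regime the quantity $e_G(X,Y)$ is a sum of only $|X|$ degree-contributions into $Y$, and one really must use that each such vertex has $\ge \gamma p \cdot (\text{large})$ neighbours in $V(G)$ to begin with — this is where the $\gamma p$-expander hypothesis on $G$, not merely $(p,\beta)$-sparseness, is essential, and where one has to be careful that the $\beta\sqrt{|X||Y|}$ slack in the sparseness definition does not eat the $\gamma_1 p|X||Y|$ lower bound we are trying to establish (this forces the hypothesis $rp \ge C\beta$, which guarantees $\beta \sqrt{|X||Y|} \le \beta \sqrt{r \cdot (\text{larger part})} \ll \gamma p |X|(\text{larger part})$). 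A clean way to package all of this is: for each ordered split $(a,b)$ with $a \le b$, bound the number of bad pairs by $\binom{n}{a}\binom{n}{b} \cdot \Pr[\text{Bin-type deviation}]$, show the product is at most $\binom{n}{r}\cdot e^{-c a p}\cdot (\text{poly}(n))$, and sum the geometric-type series over $a$. Setting $\gamma_1 = \gamma/4$ (say) and $C = C(\gamma)$ large should close the argument.
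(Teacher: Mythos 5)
There is a genuine gap, and it is in the heart of your plan: the union bound over all $2^{\Theta(r)}$ splits of $R$ cannot be beaten by any per-split concentration estimate, because the per-split failure probability is genuinely not that small. Concretely, fix a small $\gamma$ (the lemma must hold for every $\gamma\in(0,1)$) and consider a graph $G$ in which all edges meet a set $Q$ of size $2\gamma n$: say $G[P,Q]$ is a typical random bipartite graph of density $p$ with $P=V(G)\setminus Q$, and $G[P]$ is empty. This $G$ is $(p,O(\sqrt{np}))$-sparse and a $\gamma p$-expander, yet with probability roughly $(1-2\gamma)^r = e^{-\Theta(\gamma r)}$ the random set $R$ misses $Q$ entirely, in which case $e_G(X,Y)=0$ for \emph{every} split $X\cup Y=R$. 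Hence for balanced splits the number of bad pairs of sizes $(r/2,r/2)$ is at least about $e^{-\Theta(\gamma r)}\binom{r}{r/2}\binom{n}{r}\gg\binom{n}{r}$ once $\gamma<\ln 2$, so your target inequality ``(number of bad pairs of given sizes) $\leq \binom{n}{r}e^{-cap}\cdot\mathrm{poly}(n)$'' is simply false, and no Chernoff/Azuma/second-moment refinement can recover it: the quantity you are bounding is too large. The lemma itself survives because the bad splits cluster on an $n^{-\Omega(\gamma C)}$-fraction of ``bad'' sets $R$; counting (set, split) pairs destroys exactly this clustering, which is the first-moment method's fatal overcount here. (A secondary issue: even for a single split, worst-case influences in an Azuma-type argument are of order $pb+\beta\sqrt{b}$, and the $\beta\sqrt{b}$ term already ruins the exponent in the regime $\beta\approx rp/C$; and a bound of the form $e^{-crp}$, which you invoke, does not beat $2^{2r}$ when $p$ is small.)

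This is precisely why the paper's proof avoids any union bound over partitions of $R$. It establishes, for the single random set $R$, a small number of high-probability structural events --- that $\delta(G[R])\geq\gamma rp/2$, that $R$ meets every cluster of a sparse regular partition of $G$ (Lemma~\ref{lem:sparse-regularity-lemma}) proportionally, and that every dense regular pair inherits lower-regularity on $R$ via Lemma~\ref{lem:small-subsets-regular} (a union over only $t^2$ pairs, each failing with probability $o(n^{-2})$) --- and then verifies the expansion of $G[R]$ \emph{deterministically} for all partitions: unbalanced splits from the minimum degree together with Lemma~\ref{lem:edges-out-small-set}, balanced splits by approximating them with unions of clusters and transferring the expansion of $G$ through the inherited lower-regular pairs. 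If you want to salvage your write-up, you would need to replace the per-split union bound by some such ``one random event, then deterministic for all splits'' scheme; as proposed, the argument cannot be made to work.
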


The proof of the lemma is based on the sparse regularity lemma. We postpone it
to Section~\ref{sec:inheritance}. Here we just remark that the proof gives a
dependence in the order of $\gamma_1 = \Theta(\gamma^4)$, but we have no reason
to believe that this is optimal. An interesting question, unrelated to the topic
of this paper, is whether it is possible to achieve a dependence of the form
$\gamma_1 = \Omega(\gamma)$.

The proof of Theorem~\ref{thm:main-expander} relies on the {\em absorbing
method}. The main idea of this approach is to embed a small auxiliary structure
(an \emph{absorber}) into the graph $G$ that allows us to reduce the problem of
covering $G$ by $k - 1$ cycles to a simpler problem. In our case, this simpler
problem is to show that there exist subgraphs $P_1, \dotsc, P_{k-1} \subseteq G$
covering the vertices of $G$, where each $P_i$ is a vertex-disjoint union of
$O(n/\log^3 n)$ paths. This simpler problem can then be solved using a method
based on the sparse regularity lemma.

\begin{restatable}[$(X, Y)$-absorber]{definition}{absorber}
  \label{def:absorber}
  Let $X$ and $Y$ be disjoint sets of vertices. An \emph{$(X, Y)$-absorber} is a
  graph $H$ with two designated vertices $a$ and $b$ (called the
  \emph{endpoints} of the absorber) such that $X \cup Y \subseteq V(H) \setminus
  \{a, b\}$ and such that, for all subsets $X' \subseteq X$ and $Y' \subseteq Y$
  with $|X'| = |Y'|$, $H$ contains an $ab$-path $P$ with $V(P) = V(H) \setminus
  (X' \cup Y')$ (i.e., an $ab$-path using all vertices of $H$ with the exception
  of exactly the vertices in $X' \cup Y'$).
\end{restatable}

With this definition we can state the Absorbing Lemma whose proof is presented
in Section~\ref{sec:absorbers}.

\begin{restatable}[Absorbing Lemma]{lemma}{absorbing}\label{lem:absorbing-lemma}
  For every $\gamma\in (0, 1)$, there exists a positive $C(\gamma)$ such that
  the following holds for all sufficiently large $n$. Let $p \in (0, 1)$ and
  $\beta > 0$, let $G = (A, B, E)$ be a bipartite $(p, \beta)$-sparse graph on
  $n$ vertices, and let $U, W \subseteq V(G)$ be disjoint subsets such that:
  \begin{enumerate}[(i)]
    \item\label{abs-W-exp} $G[W]$ is a $\gamma p$-expander,
    \item\label{abs-W-size} $|W| \geq C \cdot \max{\{\log n/p, \beta \sqrt{\log
      n}/p, |U|\log^2n\}}$, and
    \item\label{abs-U-deg} $|U| \geq 2$ and every vertex $u \in U$ satisfies
      $\deg (u, W) \geq \gamma |W|p$.
  \end{enumerate}
  Then $G[U \cup W]$ contains a $(U \cap A, U \cap B)$-absorber with one
  endpoint in $W \cap A$ and another in $W \cap B$.
\end{restatable}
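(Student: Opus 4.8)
The plan is to construct the $(U\cap A, U\cap B)$-absorber explicitly, in the spirit of Montgomery's robustly matchable structures, and then embed it into $G[U\cup W]$ by repeated applications of the Connecting Lemma. The first step is combinatorial: we design an abstract template graph $T$ with distinguished vertex sets that has the desired absorbing property. Write $U\cap A = \{x_1,\dots,x_s\}$ and $U\cap B = \{y_1,\dots,y_s\}$ (by relabelling we may assume $|U\cap A| = |U\cap B|$; if not, we simply throw the surplus vertices into $W$, which only helps). The key building block is a gadget that can absorb a single pair $(x_i,y_i)$: following the trick of Montgomery~\cite{montgomery2019spanning}, we use a small bipartite graph $F_i$ with two ``port'' vertices $a_i,b_i$ such that $F_i$ has a spanning $a_ib_i$-path \emph{with} $x_i,y_i$ and a spanning $a_ib_i$-path \emph{without} $x_i,y_i$ — crucially removing \emph{both} $x_i$ and $y_i$ at once, so the bipartition stays balanced. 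One then chains these gadgets $F_1,\dots,F_s$ in series by identifying the out-port of $F_i$ with the in-port of $F_{i+1}$ (or, to keep everything inside $G$ and vertex-disjoint, by connecting consecutive ports through short paths in $W$), and adds the global endpoints $a\in W\cap A$, $b\in W\cap B$. Independence of the gadgets gives that for any $X'\subseteq U\cap A$, $Y'\subseteq U\cap B$ with $|X'|=|Y'|$, we can route the $ab$-path so that it picks up exactly the pairs we want to keep and skips exactly the pairs in $X'\cup Y'$ — but the ``skip'' option in a single gadget removes a fixed pair, so to handle an \emph{arbitrary} equicardinal pair $(X',Y')$ (not a matched one) we need an extra layer: a robustly matchable bipartite graph $H_0$ on the port-vertices that lets us first route a path realizing \emph{any} perfect matching between $U\cap A\setminus X'$ and $U\cap B\setminus Y'$, thereby reducing to the matched case. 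This is exactly where Montgomery's robust expander/template construction enters.

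Once the abstract template $T$ is fixed — it has $|V(T)| = O(|U|\log^2 n)$ vertices (the $\log^2 n$ factor coming from the length of the connecting paths we will use and the size of the robustly matchable graph $H_0$) and bounded maximum degree $\Delta = \Delta(\gamma)$ — the second step is to embed $T$ into $G[U\cup W]$ with the $U$-vertices mapped to themselves and everything else mapped into $W$. We do this greedily, edge by edge of a spanning structure of $T$: think of $T$ as a bounded-degree multigraph $M$ on the vertex set consisting of the $U$-vertices together with a few ``anchor'' vertices we pre-select in $W$ (including the endpoints $a\in W\cap A$, $b\in W\cap B$), and realize each edge of $M$ by a short path through $W$. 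Here we invoke Lemma~\ref{lem:connecting-lemma}: hypothesis~\ref{abs-W-exp} gives that $G[W]$ is a $\gamma p$-expander, \ref{abs-U-deg} gives $\deg(u,W)\geq \gamma|W|p$ for every $u\in U$, and \ref{abs-W-size} gives $|W|\geq C\beta\sqrt{\log n}/p$ as needed, and also ensures $|W|$ is large enough compared to $|E(M)|\cdot\ell$ since $|E(M)| = O(|U|\log^2 n)$ while $|W|\geq C|U|\log^2 n$ and $\ell = O(\log n/\log\log n)$. One technical point: the anchor vertices inside $W$ must themselves have large degree into $W\setminus(\text{used vertices})$; this is arranged by first discarding from $W$ the vertices of small degree (of which there are few, by $(p,\beta)$-sparseness) and noting that in a $\gamma p$-expander almost every vertex has degree $\Omega(|W|p)$, then reserving the anchors from this good set — and since the total number of internal path vertices used is only $O(|U|\log^3 n) = o(|W|)$, the expansion and degree hypotheses of the Connecting Lemma persist throughout the process when applied to the not-yet-used part of $W$.

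The main obstacle, I expect, is the interface between the two steps: making the abstract absorber template precise enough that (a) its absorbing property genuinely handles \emph{arbitrary} equicardinal subsets $(X',Y')$ rather than only ``matched'' ones — which forces the robustly-matchable-graph layer and a careful bipartite bookkeeping so that every path is properly $2$-coloured by the bipartition $A\cup B$ of $G$ — and (b) its realization as a bounded-degree multigraph $M$ keeps $|E(M)|$ small enough and keeps all its vertices either in $U$ or among the controlled ``good'' anchors in $W$. The bipartiteness constraint is the genuinely novel difficulty compared with the usual (non-bipartite) absorbing constructions: every gadget, every connecting path, and the template $H_0$ must respect a global $2$-colouring compatible with $(A,B)$, which is why the gadget absorbs pairs and why we need $|U\cap A| = |U\cap B|$. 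Everything after the template is fixed is a routine, if lengthy, iteration of the Connecting Lemma together with the observation that we only ever delete $o(|W|)$ vertices from $W$, so Lemmas~\ref{lem:edges-out-small-set} and~\ref{lem:connecting-lemma} remain applicable at every stage.
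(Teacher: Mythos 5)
Your outline is essentially the paper's: bipartite two-vertex gadgets that absorb a pair at a time, a Montgomery-type robustly matchable template so that arbitrary equicardinal removals reduce to choosing a perfect matching on the surviving vertices, and an embedding by repeated use of the Connecting Lemma. However, two of your steps would fail as written. First, the treatment of the imbalance $|U\cap A|\neq |U\cap B|$ is backwards: you cannot ``throw the surplus vertices into $W$''. The conclusion demands a $(U\cap A,U\cap B)$-absorber for the sets as given, and since $X'$ may be \emph{any} subset of $U\cap A$ with $|X'|=|Y'|\le\min\{|U\cap A|,|U\cap B|\}$, every vertex of the larger side must remain individually omittable; demoting surplus $U$-vertices produces an absorber for a strictly smaller pair of sets (and enlarging $W$ also voids hypothesis~\ref{abs-W-exp}). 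The correct move is the opposite one: pad $U\cap A$ and $U\cap B$ with vertices of $W$ (the paper first shows $|W\cap A|,|W\cap B|\ge\gamma|W|/4$ and then takes both sides up to size $2m$, $m=\max\{|U\cap A|,|U\cap B|\}$, by random subsets of $W\cap A$ and $W\cap B$), so that the template of Lemma~\ref{lem:template} applies with $U\cap A\subseteq f(A_T')$ and $U\cap B\subseteq f(B_T')$ while the dummy vertices are simply never removed.

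Second, your quantitative bookkeeping does not fit hypothesis~\ref{abs-W-size}, under which $|W|$ may be as small as $C|U|\log^2 n$. The Connecting Lemma requires $e(M)\le |W|/(C\ell)$ with $\ell=\Theta(\log n/\log\log n)$, so one application can realize only about $|U|\log n\log\log n$ edges, not $O(|U|\log^2 n)$; and your claimed total of $O(|U|\log^3 n)$ internal vertices is not $o(|W|)$ --- it exceeds $|W|$ by a factor of order $\log n$. The construction must be staged so that the total size is $O(|U|\ell^2)=o(|U|\log^2 n)$: $O(|U|)$ pair-connections, then $O(|U|\ell)$ rungs, then $O(|U|)$ chaining edges; the $\log\log n$ saving in $\ell$ is precisely what closes the budget. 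Relatedly, the assertion that the expansion and degree hypotheses ``persist'' on the not-yet-used part of $W$ is unjustified: the Connecting Lemma gives no control over which vertices of $W$ it consumes, and when $p$ is small ($\gamma p|W|$ can be of order $\log n$ while the used set has size $\Omega\bigl(|W|/(\log\log n)^2\bigr)$) the used vertices may swallow the entire $W$-neighbourhood of a remaining vertex of $U\cup W$, destroying both conditions; the $(p,\beta)$-sparseness error term $\beta\sqrt{|X||Y|}$ is too weak to rule this out for small sets. The paper sidesteps this by splitting $W$ at random into three reservoirs $W_1,W_2,W_3$ \emph{before} any connecting and securing, via the Inheritance Lemma (Lemma~\ref{lem:inheritance-lemma}) and Chernoff bounds, that each $W_i$ is a $\gamma_1p$-expander into which every relevant vertex has degree at least $\gamma_1|W_i|p$; some such reservation argument is missing from your proposal.
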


Often when dealing with embedding problems in sparse (pseudorandom) graphs it is
not too difficult to embed a desired structure that covers all but $\eps n$
vertices of the graph. With the next lemma we reduce this leftover
significantly, and what is more, require that only the majority of vertices have
the `correct' degree for the structure to be embedded. The proof relies on a
standard application of the sparse regularity lemma combined with a trick of
Nenadov and the second author and is presented in
Section~\ref{sec:embedding-lemma}.

\begin{restatable}[Embedding Lemma]{lemma}{embedding}\label{lem:embedding-lemma}
  For every integer $k \geq 2$ and every $\alpha > 0$, there exists a positive
  $\eta(\alpha, k)$ such that the following holds for all sufficiently large
  $n$. Let $p \in (0, 1)$ and $\beta \leq \eta np$, and let $G$ be a $(p,
  \beta)$-sparse graph on $n$ vertices such that
  \[
    \delta(G) \geq 2\alpha np \quad \text{and} \quad \delta_{\alpha/32}(G) \geq
    (1/k + \alpha) np.
  \]
  Then the vertices of $G$ can be covered by $k - 1$ path forests $P_1, \dotsc,
  P_{k-1}$ that contain at most $\max{\{ \beta/p, \log n/p \}}$ paths each.
\end{restatable}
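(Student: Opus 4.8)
The plan is to use the sparse regularity lemma (Lemma~\ref{lem:sparse-regularity-lemma}) to find a regular partition of $G$, pass to the reduced graph $\mathcal{R}$ on the cluster indices, and then exploit the degree conditions on $G$ to read off a good minimum-degree-type condition on $\mathcal{R}$. Concretely, for a suitably small $\eps=\eps(\alpha,k)$, apply the regularity lemma with parameters $\eps$ and $m=m(\eps)$ large to obtain an $(\eps,p)$-regular partition $V(G)=V_0\cup V_1\cup\dots\cup V_t$. Here one has to be slightly careful: the density of $G$ need not be exactly $p$, but the $(p,\beta)$-sparseness together with $\delta(G)\ge 2\alpha np$ and $\beta\le\eta np$ guarantees that the density is $\Theta(p)$, and by passing to $(\eps,p')$-regularity for $p'$ equal to the true density (a constant-factor rescaling) we may assume the partition is $(\eps,p)$-regular after shrinking $\eps$. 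Delete from $\mathcal{R}$ all pairs that are not $(\eps,p)$-regular, all pairs with density below, say, $\eps p$, and also all clusters $V_i$ containing more than $\sqrt{\alpha}$-fraction of the $\le\alpha n/32$ low-degree vertices of $G$; since there are only $\eps t^2$ irregular pairs, few low-density pairs on average, and few `bad' clusters, a standard counting argument shows that in the surviving reduced graph $\mathcal{R}$ all but an $O(\sqrt\eps\,)$-fraction of clusters have degree at least $(1/k+\alpha-o(1))t$, and every surviving cluster has degree at least $(\alpha-o(1))t$ (using $\delta(G)\ge 2\alpha np$).

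Next I would invoke the bootstrapping result of Nenadov and Škorić~\cite{nenadov2020komlos} on $\mathcal{R}$. That theorem (in the spirit of the Komlós–Sárközy–Szemerédi / Komlós tiling framework) takes a dense graph in which most vertices have degree at least $(1/k+\alpha)t$ and the rest have degree at least $\Omega(t)$, and produces a spanning subgraph consisting of $k-1$ path forests with only a \emph{constant} number of paths each --- or, in the version one really wants here, a family of $k-1$ spanning path systems in $\mathcal{R}$ whose paths are vertex-disjoint and few in number. (The constant-degree vertices are exactly what the `$\delta(G)\ge 2\alpha np$' hypothesis and the corresponding $\delta(\mathcal{R})\ge(\alpha-o(1))t$ are for: they let the path forests reach those clusters even though they may fail the $(1/k+\alpha)$ bound.) One then lifts each path of $\mathcal{R}$ to $G$: a path $V_{i_1}V_{i_2}\dots V_{i_s}$ in $\mathcal{R}$, all of whose consecutive pairs are $(\eps,p)$-regular of density $\ge\eps p$, can be used to route an (almost) spanning path forest inside $G[V_{i_1}\cup\dots\cup V_{i_s}]$ covering all but at most $\eps|V_{i_1}|$ vertices, by the usual greedy embedding in super-regular pairs (after an initial cleaning to make the pairs super-regular on slightly shrunk clusters). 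Summing over all $\le$ constantly-many $\mathcal{R}$-paths in each colour class, colour $j$ covers all of $G$ except an $O(\eps)n$-sized remainder plus the exceptional class $V_0$, using $O(1)$ paths.

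It remains to absorb the $O(\eps n+|V_0|)\le \alpha n/64$ leftover vertices, say set $L$, into the $k-1$ path forests while keeping the number of paths bounded by $\max\{\beta/p,\log n/p\}$. For this I would not use the regularity partition but instead note that each $v\in L$ has $\deg_G(v)\ge 2\alpha np$ unless $v$ is one of the $\le\alpha n/32$ globally-low-degree vertices, and in either case $v$ has at least $\Omega(np)$ neighbours; greedily, one breaks each existing path forest into more pieces and threads the leftover vertices in one at a time, each insertion using up a bounded number of new path-endpoints. Because $|L|\ll np$ and we have $\Theta(n)$ paths' worth of `slack' coming from the budget $\log n/p$ (recall $p$ can be as small as $\log^6 n/n$, so $\log n/p$ is a genuinely large allowance), this greedy absorption terminates; one has to check the bookkeeping so that the final count of paths in each $P_j$ stays below $\max\{\beta/p,\log n/p\}$, which works because each of the $O(\eps n)$ insertions creates at most two new paths. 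The main obstacle, and the step I would budget the most care for, is the interface between the two degree thresholds inside the reduced graph: ensuring that the bootstrapped path-forest decomposition of $\mathcal{R}$ genuinely reaches the clusters that only satisfy the weaker $\Omega(t)$ degree bound, and that the low-degree vertices of $G$ (which cannot be freely moved around) land on the \emph{interiors} rather than the endpoints of lifted paths so that the later greedy absorption is not blocked --- this is exactly where the Nenadov–Škorić trick is doing the real work and where the precise constants $\alpha/32$ versus $\alpha/128$ appearing in the statements get pinned down.
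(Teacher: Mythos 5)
Your first step---an approximate cover obtained from the sparse regularity lemma---is sound and is essentially what the paper does in its auxiliary Lemma~\ref{lem:embedding-cheat} (there the low-degree clusters of the reduced graph are handled by greedily finding a matching that saturates them and then covering the rest of the reduced graph by $k-1$ cycles via Theorem~\ref{thm:KL}; the result of Nenadov and \v{S}kori\'c is \emph{not} a dense-graph path-forest decomposition theorem and cannot be invoked on the reduced graph in the way you describe). The genuine gap is in your last step. After the regularity-based embedding you are left with $\Theta(\varepsilon n)$ uncovered vertices, and you propose to thread them greedily into the existing paths, each insertion creating up to two new paths, arguing that the budget $\max\{\beta/p,\log n/p\}$ gives ``$\Theta(n)$ paths' worth of slack''. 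That is false: under the hypotheses, $\log n/p$ can be as small as $O(\log n)$ (constant $p$) and is never more than roughly $n/\log^{3}n$ in the regime where the lemma is applied, while $\beta/p\le\eta n$ is only useful when $\beta$ is large; in particular for $\beta=O(\sqrt{np})$ and $p$ bounded away from $0$ the budget is polylogarithmic. So a procedure that creates even one new path per leftover vertex (or adds leftover vertices as singletons) overshoots the allowed number of paths by a factor that can be nearly linear in $n$. Nothing in your argument prevents this, since a leftover vertex can be absorbed into a path \emph{without} increasing the path count only if it has two consecutive neighbours on that path, which the degree hypotheses do not guarantee.

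The missing idea---and this is what the Nenadov--\v{S}kori\'c trick actually is in the paper---is an iterated, multi-scale application of the approximate covering lemma rather than a one-shot cover plus greedy absorption. One fixes a random partition $V(G)=L_1\cup\dotsb\cup L_m$ with $|L_i|=\lfloor n/2^i\rfloor$, stopping when the parts reach size $\Theta(\max\{\beta/p,\log n/p\})$ (so $m\le\log_2 n$), chosen so that every vertex retains proportional degree into each $L_i$ and the two-tier degree condition is inherited. One applies Lemma~\ref{lem:embedding-cheat} to $G[L_1]$, and then inductively to $G[L_{i+1}\cup W_i]$, where $W_i$ is the set of vertices left uncovered so far; since $|W_i|\le\mu|L_i|\le 2\mu|L_{i+1}|$, the degree conditions still hold in $G[L_{i+1}\cup W_i]$, each of the $k-1$ forests gains only \emph{one} new path per round, and the leftover shrinks geometrically with the scale. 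After $m$ rounds the uncovered set has size $O(\max\{\beta/p,\log n/p\})$ and can be appended as singleton paths, giving at most $(k-1)m+O(\mu\max\{\beta/p,\log n/p\})\le\max\{\beta/p,\log n/p\}$ paths per forest. Without this (or some comparable mechanism for reducing the leftover from $\Theta(\varepsilon n)$ to $O(\max\{\beta/p,\log n/p\})$ while adding only $O(\log n)$ paths), your proof does not establish the stated bound on the number of paths.
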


With all the `big guns' at hand we are ready to prove
Theorem~\ref{thm:main-expander}.

\subsection{Overview}

The proof follows a standard strategy relying on the absorbing method: (1)
choose a partition $V(G) = V' \cup U \cup W$ into sets of appropriate size
uniformly at random; (2) use the Absorbing Lemma to find an absorber $H$ in $G[U
\cup W]$; (3) use the Embedding Lemma to cover the vertices of $G[V' \cup W]$
not in $H$ by $k-1$ path forests each consisting of at most $n/\log^3 n$ paths;
(4) for every forest, use the Connecting Lemma over $G[U]$ to combine all its
paths into a cycle; and (5) use the absorbing property of $H$ to take in the
unused vertices of $G[U]$. One of the main issues we need to deal with while
executing this strategy is that for $k \geq 3$ the graph $G$ can be bipartite.
This introduces several complications.

First of all, in order to find an absorber $H$, the underlying graph $G[U \cup
W]$ needs to be {\em bipartite} and $G[W]$ a {\em $\gamma p$-expander}; even
more, for the Connecting Lemma we need $G[U]$ to be an expander as well. In case
$k = 2$ this is circumvented by choosing a random equipartition of both $U = U_A
\cup U_B$ and $W = W_A \cup W_B$ taking $F$ to be the bipartite graph between
corresponding colour classes. As the (essential) minimum degree of $F[U]$ is in
this case around $(1/2+o(1))|U|p/2$, a simple calculation shows that $F[U]$ is a
$\gamma_1p$-expander, for a suitable choice of $\gamma_1$. The same argument
applies for $F[W]$. The second issue lies in the fact that, when using the
Connecting Lemma across $F[U]$, we need to use exactly the same number of
vertices of both $U_A$ and $U_B$ in order for the `absorbing property' of $H$ to
apply. In case $k = 2$ this is not a problem, as all vertices $v \in V(G)$ have
a significant portion of their degree into {\em both} $U_A$ and $U_B$ in $G$.
One can then ensure that the vertices of $U_A$ and $U_B$ are used in a balanced
manner.

More serious problems start to occur when $k \geq 3$. For a start, the above
strategy of choosing a random equipartition of $U$ and $W$ simply does not work
as the obtained bipartite graph is not necessarily an expander. Before
partitioning $V(G)$ we first apply
Lemma~\ref{lem:expander-to-bipartite-expander} to the whole graph $G$, to find a
spanning bipartite $(\gamma p/2)$-expander $F \subseteq G$ on colour classes $A$
and $B$. Next, a random partition is chosen into sets $V' \cup U \cup W$ as
above, and the Inheritance Lemma (Lemma~\ref{lem:inheritance-lemma}) implies
$F[U]$ and $F[W]$ are w.h.p.\ both a $\gamma_1 p$-expander. However, in this
case we cannot guarantee that there is even a single vertex with significant
portion of its degree into both $U \cap A$ and $U \cap B$---recall, $G$ may be
bipartite itself and $F = G$ potentially! Hence, if in step (4) from above the
Connecting Lemma is applied over $F[U \cap A, U \cap B]$ blindly, it may result
in an imbalance in the number of vertices used, as we cannot control which of
the vertices (the ones given as $V(M)$ to Lemma~\ref{lem:connecting-lemma}) have
neighbours in $U \cap A$ and which in $U \cap B$. It may be that all
$(k-1)n/\log^3 n$ pairs of vertices we are trying to connect have neighbours
only in, say, $U \cap A$, resulting in $(k-1)n/\log^3 n$ more vertices of $U
\cap A$ being used than those of $U \cap B$, since all paths in $F[U]$ with both
endpoints in $U \cap A$ are of even length. Luckily, as $k \geq 3$, we can reuse
some vertices across the $k - 1$ cycles we are trying to find, which gives us
additional flexibility.

The plan is to find sets $Q_A$ and $Q_B$, in $V(G) \setminus (U \cup W)$, each
of size roughly $kn/\log^3 n$, such that all vertices of $Q_A$ in $G$ have a
large neighbourhood in $U \cap A$ and all vertices of $Q_B$ have a large
neighbourhood in $U \cap B$. The Connecting Lemma is then used to connect all
these vertices into a single path $P_Q$, before covering the remainder $V'
\setminus V(P_Q)$ by $k-1$ path forests, each containing at most $n/\log^3 n$
paths. We connect $P_Q$ with the first path forest through $F[U]$ to get a cycle
$C_1$ and then, while connecting the remaining forests into cycles $C_2, \dotsc,
C_{k-1}$, we can \emph{reuse} the vertices of $P_Q$ by adding a carefully chosen
subset of them to the pairs in $V(M)$ we aim to connect by the Connecting Lemma.
This can be done so that exactly the same number of vertices of $U \cap A$ and
$U \cap B$ is used, allowing us to use the `absorbing property' of $H$ to
finally complete the embedding.

\begin{proof}[Proof of Theorem~\ref{thm:main-expander}]
  Without loss of generality we may assume $0 < p \leq 1/2$. Given $k$, $\alpha$,
  and $\gamma$, let $\gamma_1 = \min{\{\alpha/1024, \gamma/1024,
  \gamma_{1_{\ref{lem:inheritance-lemma}}}(\gamma/2)\}}$, $\eps =
  \min{\{\alpha/1024, \gamma_1/64\}}$,
  \[
    C = \max{\{
      30/\eps^2,
      C_{\ref{lem:connecting-lemma}}(\gamma_1, 2),
      C_{\ref{lem:connecting-lemma}}(\gamma_1^2/8, 2),
      C_{\ref{lem:inheritance-lemma}}(\gamma/2),
      C_{\ref{lem:absorbing-lemma}}(\gamma_1/2)
    \}},
  \]
  and $\eta = \eps/C^2$. Suppose that $G = (V, E)$ is a $(p, \beta)$-sparse
  graph with minimum degree at least $2\alpha np$ and $\delta_{\alpha/128}(G)
  \geq (1/k + \alpha)np$. Note that, since $\beta \leq \eta np/\log^3 n$ and
  $\delta(G) \geq 2\alpha np$, the definition of $(p,\beta)$-sparse graphs
  applied to a vertex $v$ and its neighbourhood $N_G(v)$ implies $p \geq
  \eta^{-1}\log^4 n/n$, with room to spare.

  \subsubsection*{The Hamiltonian case, \texorpdfstring{$k = 2$}{k = 2}}

  Let $V = V' \cup U \cup W$ be a partition of $V$ chosen uniformly at random
  such that
  \begin{equation}\label{eq:main-2-sizes}
    |U| = \floor*{\frac{\eps n}{C\log^2 n}}, \quad |W| = \floor{\eps n},
    \quad \text{and} \quad |V'| = n - |U| - |W|.
  \end{equation}
  Hence, from the bounds on $p$ and $\beta$, we have
  \begin{equation}\label{eq:main-2-partition-lb-size}
    |U|, |W|, |V'| \geq C \cdot \max{\{ \beta\sqrt{\log n}/p, \log n/p \}},
  \end{equation}
  by our choice of $\eta$. As an easy consequence of Chernoff's inequality and
  the union bound, with high probability
  \begin{equation}\label{eq:main-2-degrees}
    \deg_G(v, Z) \geq (2\alpha-\eps)|Z|p \qquad \text{and} \qquad
    \delta_{\alpha/126}(G[Z]) \geq (1/2+3\alpha/4)|Z|p,
  \end{equation}
  for all $v \in V(G)$ and $Z \in \{V', U, W\}$. For the remainder of the
  proof we fix such a good choice of sets $V'$, $U$, and $W$.

  Let $U = U_A \cup U_B$ and $W = W_A \cup W_B$ be partitions with $|U_A| =
  \floor{|U|/2}$ and $|W_A| = \floor{|W|/2}$, and let $F := G[U_A \cup W_A, U_B
  \cup W_B]$ a bipartite graph such that the following holds:
  \begin{enumerate}[label=(P\arabic*), leftmargin=3em]
    \item\label{2-expander} $F[U]$ and $F[W]$ are both a $\gamma_1p$-expander,
    \item\label{2-min-deg} $\deg_F(v, Z) \geq (\alpha/2)|Z|p$ and
      $\delta_{\alpha/120}(F[Z]) \geq (1/4+\alpha/2)|Z|p$, for all $v \in U \cup
      W$ and $Z \in \{U, W\}$,
    \item\label{2-min-deg-fix} for every $v \in V(G)$, $\min\{\deg_G(v, U_A),
      \deg_G(u, U_B)\} \geq \gamma_1|U|p$.
  \end{enumerate}
  If such partitions are chosen uniformly at random then Chernoff's inequality
  and the union bound, together with \eqref{eq:main-2-sizes} and
  \eqref{eq:main-2-degrees}, show that w.h.p.\ \ref{2-min-deg} and
  \ref{2-min-deg-fix} hold. Fix such a choice of sets. We show that these imply
  \ref{2-expander} as well.

  We only show that $F[U]$ is a $\gamma_1p$-expander, as an analogous argument
  works for $F[W]$. Let $X \subseteq U$ be of size $|X| \leq |U|/2$. If $|X|
  \leq (\alpha/4)|U|$, then from \ref{2-min-deg} by applying
  Lemma~\ref{lem:edges-out-small-set} with $\alpha/4$ (as $\alpha$), $F[U]$ (as
  $G$), and $X$ (as $A$) we obtain
  \[
    e_F(X, U \setminus X) \geq \big((\alpha/4)|U|p - \beta\big)|X| \geq
    (\alpha/8)|X||U|p \geq \gamma_1|X||U|p,
  \]
  since $\beta = o(|U|p)$. On the other hand, if $(\alpha/4)|U| < |X| \leq
  |U|/2$, from \ref{2-min-deg} we have
  \[
    e_F(X, U \setminus X) \geq \big(|X| -
    \tfrac{\alpha}{120}|U|\big)(1/4+\alpha/2)|U|p - 2e_G(X \cap U_A, X \cap
    U_B).
  \]
  Then, $G$ being $(p, \beta)$-sparse further gives
  \[
    e_F(X, U \setminus X) \geq (1/4+\alpha/2)|X||U|p -
    \tfrac{\alpha}{120}(1/4+\alpha/2)|U|^2p - |X|^2p/2 - \beta|X|.
  \]
  An easy case distinction between $|X| < |U|/4$ and $|U|/4 \leq |X| \leq |U|/2$
  in both cases shows (with room to spare) $e_F(X, U \setminus X) \geq
  \gamma_1|X||U|p$, as desired.

  Given such a graph $F$, for $X \in \{A, B\}$, we say that a vertex $v \in
  V(G)$ is {\em $X$-expanding} if $\deg_G(v, U_X) \geq \gamma_1|U|p$. We say
  that a pair $\{u, v\}$ of vertices $u, v \in V(G)$ is an {\em $X$-pair} if $u$
  and $v$ are both $X$-expanding. Note that a vertex can be both $A$-expanding
  and $B$-expanding.

  We apply the Absorbing Lemma (Lemma~\ref{lem:absorbing-lemma}) with $\gamma_1$
  (as $\gamma$) and $F$ (as $G$) to obtain a $(U_A, U_B)$-absorber $H$ in the
  graph $F[U \cup W]$ with one endpoint $a \in W_A$ and the other $b \in W_B$.
  Indeed, \ref{2-expander}, \eqref{eq:main-2-sizes} and
  \eqref{eq:main-2-partition-lb-size}, and \ref{2-min-deg} in that order verify
  Lemma~\ref{lem:absorbing-lemma}~$\ref{abs-W-exp}$--$\ref{abs-U-deg}$.

  Let $V'' := V' \cup (W \setminus V(H))$. Observe that $|W \setminus V(H)| \leq
  \eps n$. Hence, we get for all $v \in V''$
  \begin{equation}\label{eq:min-deg-cov-V}
    \deg_G(v, V'') \geq \deg_G(v, V') \osref{\eqref{eq:main-2-degrees}}\geq
    (2\alpha-\eps)|V'|p \geq \frac{2\alpha-\eps}{1+2\eps}|V''|p \geq
    \alpha|V''|p.
  \end{equation}
  Similarly, all $v \in V''$ that previously satisfied $\deg_G(v, V') \geq
  (1/2+3\alpha/4)|V'|p$, now satisfy
  \begin{equation}\label{eq:essential-min-deg-cov-V}
    \deg_G(v, V'') \geq \frac{1/2+3\alpha/4}{1+2\eps}|V''|p \geq
    (1/2+\alpha/2)|V''|p,
  \end{equation}
  due to our choice of $\eps$. Lastly, the set of vertices in $V''$ violating
  \eqref{eq:essential-min-deg-cov-V} is of size at most $(\alpha/126)|V'| +
  |W| \leq (\alpha/64)|V''|$. Therefore, we can apply the Embedding Lemma
  (Lemma~\ref{lem:embedding-lemma}) with $\alpha/2$ (as $\alpha$) to the graph
  $G [V'']$ to obtain a path forest $P_1$ that contains at most $\max{\{
  \beta/p, \log n/p \}} \leq n/\log^3 n$ paths and covers all vertices of $V''$.

  Let $m$ denote the total number of paths in $P_1$. Take an arbitrary ordering
  of these $m$ paths and let us denote the endpoints of the $i$-th path by $s_i$
  and $t_i$, for all $i \in [m]$; note that a path may be only a single vertex,
  in which case $s_i = t_i$. Since every $v \in V''$ has degree at least
  $\gamma_1|U|p$ into {\em both} $U_A$ and $U_B$ (by \ref{2-min-deg-fix}), by
  sequentially choosing for each vertex whether we make it $A$-expanding or
  $B$-expanding, that is, artificially removing the edges to the other colour
  class of $F[U]$, we can make the following set of pairs have exactly the same
  number of $A$-pairs as $B$-pairs:
  \[
    \cP = \big\{\{b, s_1\}, \{t_i, s_{i + 1}\}_{i \in [m - 1]}, \{t_m,
    a\}\big\}.
  \]

  Finally, we apply the Connecting Lemma (Lemma~\ref{lem:connecting-lemma}) to
  the graph obtained from $G$ by substituting $G[U]$ by $F[U]$, and with
  $\gamma_1$ (as $\gamma$), $V(G) \setminus U$ (as $U$), $U$ (as $W$), $\ell =
  30\log n/(\gamma_1 \log\log n)$, and the multigraph $M$ with the vertex set
  \[
    V(M) = \{a, b\} \cup \{s_i, t_i\}_{i \in [m]}
  \]
  and the edge set $E(M) = \cP$. Clearly, $\Delta(M) \leq 2$ and $e(M) \leq m+1
  \leq 2n/\log^3 n \leq |U|/(C\ell)$. Therefore, we obtain an $ab$-path $P'$
  covering all vertices of $V''$. Crucially, $|V(P') \cap U_A| = |V(P') \cap
  U_B|$, ensured by the argument above.

  Let $A'$ and $B'$ denote the set of vertices in $U_A$ and $U_B$, respectively,
  belonging to $V(P')$. By definition (see Definition~\ref{def:absorber}), $H$
  contains an $ab$-path $P$ with $V(P) = V(H) \setminus (A' \cup B')$, and
  substituting $H$ with $P$ together with $P'$ closes a cycle $C_1$ which covers
  all vertices of $G$, as required.

  \subsubsection*{The general case, \texorpdfstring{$k \geq 3$}{k = 3}}

  In this case, due to low (essential) minimum degree, splitting $W$ and $U$
  uniformly at random into colour classes as above is not enough to guarantee
  the obtained bipartite graph is an expander. On top of that, the whole graph
  $G$ may be bipartite and a property like \ref{2-min-deg-fix} cannot hold. We
  use a slightly different approach in this case.

  Let $F \subseteq G$ be a spanning bipartite $(\gamma p/2)$-expander with
  colour classes $A$ and $B$ obtained by applying
  Lemma~\ref{lem:expander-to-bipartite-expander} to $G$. Let $V = V' \cup U \cup
  W \cup Q \cup Y$ be a partition of $V$ chosen uniformly at random such that
  \begin{equation}\label{eq:main-k-set-sizes}
    |U| = \floor*{\frac{\eps n}{C\log^2 n}}, \quad |W| = |Q| = |Y| = \floor{\eps
    n}, \quad \text{and} \quad |V'| = n - |U| - |W| - |Q| - |Y|.
  \end{equation}
  Observe that, from the bounds on $p$ and $\beta$, we have
  \begin{equation}\label{eq:main-k-partition-lb-size}
    |U|, |W|, |Q|, |Y|, |V'| \geq C \cdot \max{\{\beta\sqrt{\log n}/p, \log
    n/p\}},
  \end{equation}
  as before. Since $\delta(F) \geq (\gamma/2)p(n-1)$, as an easy consequence of
  Chernoff's inequality and the union bound, with high probability
  \begin{equation}\label{eq:main-k-degrees}
    \deg_F(v, Z) \geq \gamma_1|Z|p, \enspace \deg_G(v, Z) \geq
    (2\alpha-\eps)|Z|p, \enspace \text{and} \enspace \delta_{\alpha/126}(G[V'])
    \geq (1/k+3\alpha/4)|V'|p,
  \end{equation}
  for every $v \in V(G)$ and $Z \in \{V', U, W, Q, Y\}$. Additionally, from the
  Inheritance Lemma (Lemma~\ref{lem:inheritance-lemma}) with probability at
  least $1 - 6n^{-1}$ we have that all of $G[Q]$, $G[Y]$, $G[U \cup Q]$, $F[U]$,
  $F[W]$, $F[U \cup W]$ are a $\gamma_1p$-expander. For the remainder of the
  proof we fix such a good partition.

  By the Absorbing Lemma (Lemma~\ref{lem:absorbing-lemma}) applied with
  $\gamma_1$ (as $\gamma$) and $F$ (as $G$) there is a $(U \cap A, U \cap
  B)$-absorber $H$ in the graph $F[U \cup W]$ with endpoints $a \in W \cap A$
  and $b \in W \cap B$. Indeed, $F[W]$ being a $\gamma_1p$-expander,
  \eqref{eq:main-k-set-sizes} and \eqref{eq:main-k-partition-lb-size}, and
  \eqref{eq:main-k-degrees} in that order establish
  Lemma~\ref{lem:absorbing-lemma}~$\ref{abs-W-exp}$--$\ref{abs-U-deg}$.

  As before, given $U \cap A$ and $U \cap B$ we say that a pair $\{u, v\}$ of
  vertices $u, v \in V(G)$ is an $A$-pair if $u$ and $v$ are both $A$-expanding
  and a $B$-pair if both are $B$-expanding. In order to gain some flexibility on
  the number of $A$-pairs and $B$-pairs we make use of the following claim.

  \begin{claim}\label{cl:main-k-fix-balance}
    There exist $Q_A, Q_B \subseteq Q$ of size $|Q_A|, |Q_B| = \floor{kn/\log^3
    n}$ such that
    \begin{enumerate}[(a)]
      \item $\deg_G(v, U \cap A) \geq (\gamma_1^2/8)|U|p$ for all $v \in Q_A$,
        and
      \item $\deg_G(v, U \cap B) \geq (\gamma_1^2/8)|U|p$ for all $v \in Q_B$.
    \end{enumerate}
  \end{claim}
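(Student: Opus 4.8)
The plan is to construct $Q_A$ and $Q_B$ separately; by the symmetry between $A$ and $B$ I only describe $Q_A$, the set $Q_B$ being obtained by exchanging the roles of $A$ and $B$ everywhere (the two directions are handled independently, and by a counting of $|Q|$ at most one of them will actually need the harder argument below). Call a vertex $v\in Q$ \emph{bad} if $\deg_G(v,U\cap A)<(\gamma_1^2/8)|U|p$ and let $Q^{\mathrm{bad}}$ be the set of bad vertices; it suffices to prove $|Q\setminus Q^{\mathrm{bad}}|\geq\floor{kn/\log^3 n}$ and then take $Q_A$ to be any subset of $Q\setminus Q^{\mathrm{bad}}$ of that size. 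Two elementary observations drive the argument. First, since $F\subseteq G$ is bipartite with parts $A$ and $B$, every $v\in B$ satisfies $\deg_G(v,U\cap A)\geq\deg_F(v,U\cap A)=\deg_F(v,U)\geq\gamma_1|U|p>(\gamma_1^2/8)|U|p$ by \eqref{eq:main-k-degrees}; hence $Q^{\mathrm{bad}}\subseteq Q\cap A$, and in particular $Q\cap B\subseteq Q\setminus Q^{\mathrm{bad}}$. Second, if $v$ is bad then, again by \eqref{eq:main-k-degrees} and the fact that $\eps,\gamma_1\leq\alpha/1024$,
\[
  \deg_G(v,U\cap B)=\deg_G(v,U)-\deg_G(v,U\cap A)\geq\bigl(2\alpha-\eps-\gamma_1^2/8\bigr)|U|p\geq\tfrac{\alpha}{2}|U|p .
\]

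If $|Q\cap B|\geq\floor{kn/\log^3 n}$ we are done by the first observation, so assume $|Q\cap B|<\floor{kn/\log^3 n}$. To the high-probability properties used to fix the partition I would add the Chernoff estimates $|U\cap Z|\leq\tfrac32|Z||U|/n$ and $|Q\cap Z|\geq\tfrac12|Z||Q|/n$ for $Z\in\{A,B\}$; these hold w.h.p.\ because $F$ being a $(\gamma p/2)$-expander forces $\delta(F)\geq(\gamma p/2)(n-1)$, hence $|A|,|B|\geq(\gamma/4)np$, so the relevant expectations are of order $np=\Omega(\log^6 n)$. From $|Q\cap B|<\floor{kn/\log^3 n}$ and the lower bound on $|Q\cap B|$ we then get $|B|=o(n)$, whence $|U\cap B|=o(|U|)$. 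I now bound $|Q^{\mathrm{bad}}|$ by double counting the edges between $Q$ and the small set $U\cap B$: by the second observation and $(p,\beta)$-sparseness applied to the (disjoint) pair $(Q,U\cap B)$,
\[
  \tfrac{\alpha}{2}|U|p\cdot|Q^{\mathrm{bad}}|\leq e_G(Q,U\cap B)\leq p|Q||U\cap B|+\beta\sqrt{|Q||U\cap B|}.
\]
Dividing through and using $\sqrt{|Q||U\cap B|}\leq\sqrt{|Q||U|}$, $|Q|/|U|=O(\log^2 n)$ and $\beta/p\leq\eta n/\log^3 n$ shows that the $\beta$-term contributes $o(|Q|)$, while the first term contributes $2|Q||U\cap B|/(\alpha|U|)=o(|Q|)$ since $|U\cap B|=o(|U|)$. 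Hence $|Q^{\mathrm{bad}}|=o(|Q|)$, so $|Q\setminus Q^{\mathrm{bad}}|\geq(1-o(1))|Q|\geq\floor{kn/\log^3 n}$ for $n$ large, as required.

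The main obstacle is conceptual rather than computational: neither the expansion of $G$ (or of $G[U\cup Q]$) nor a crude degree count distinguishes $U\cap A$ from $U\cap B$, and if $G$ happens to be close to bipartite with the $A$/$B$ split, most vertices of $A$ may fail to have a neighbour in $U\cap A$ at all, so the `obvious' approaches break. The resolution is to use \emph{two different} structural inputs: the bipartiteness of $F$ for vertices lying in the `wrong' colour class, and the $(p,\beta)$-sparseness bound, which forbids $G$ from behaving like a very unbalanced complete bipartite graph and thus forces $|B|$ (hence $|U\cap B|$) to be small exactly when not enough good vertices can be found inside $B$. The only routine points to check are that the Chernoff estimates above are legitimate — which is why the bound $|A|,|B|=\Omega(np)=\Omega(\log^6 n)$ coming from $\delta(F)$ is needed — and that the $\beta$-error term is negligible, which is immediate from $\beta\leq\eta np/\log^3 n$, $|U|=\Theta(n/\log^2 n)$, and the choice of $\eta$.
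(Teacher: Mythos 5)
Your proof is correct, but it follows a genuinely different route from the paper's. The paper argues deterministically from the properties already fixed for the partition: it first shows $|U\cap A|,|U\cap B|\geq \gamma_1|U|/4$ using the minimum degree of the $\gamma_1p$-expander $F[U]$ together with $(p,\beta)$-sparseness, and then derives a contradiction from the assumption that fewer than $n/\log^2 n$ vertices of $Q$ have large degree into $U\cap A$, by applying the expansion of $G[U\cup Q]$ to the cut $(U\cap A,\, Q\cup (U\cap B))$ and bounding the three contributions $e_G(U\cap A,U\cap B)$, $e_G(U\cap A,Q_A)$, and the edges to the low-degree vertices of $Q$. You instead exploit two inputs the paper does not use here: the bipartiteness of $F$ (so every $v\in Q\cap B$ is automatically good for $Q_A$, since $\deg_F(v,U)=\deg_F(v,U\cap A)\geq\gamma_1|U|p$) and the full degree condition $\deg_G(v,U)\geq(2\alpha-\eps)|U|p$ (so a vertex that is bad for $Q_A$ must send $\Omega(\alpha|U|p)$ edges into $U\cap B$), combined with a dichotomy on $|Q\cap B|$ and a sparseness count against the then-small set $U\cap B$. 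The trade-off: your argument dispenses entirely with the expansion of $G[U\cup Q]$ (which the paper includes in its Inheritance Lemma application precisely for this claim), but it requires augmenting the list of high-probability properties of the random partition with the two Chernoff-type estimates $|U\cap Z|\leq\tfrac32|Z||U|/n$ and $|Q\cap Z|\geq\tfrac12|Z||Q|/n$ for $Z\in\{A,B\}$; these are legitimate to add (the expectations are $\Omega(\log^4 n)$ because $|A|,|B|\geq(\gamma/4)np$ and $np=\Omega(\log^6 n)$, and only constantly many events are involved), but they do mean the claim is not being proved from exactly the properties the paper fixes before stating it, so strictly speaking the surrounding setup must be amended. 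With that amendment, all your estimates check out (in particular the $\beta$-term is $O(\eta n/\log^2 n)=o(|Q|)$ and the main term is $o(|Q|)$ since $|U\cap B|=O(|U|/\log^3 n)$ in the hard case).
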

  \begin{proof}
    We show only the first assertion as the second one follows analogously. Let
    $U_A := U \cap A$ and $U_B := U \cap B$. We first show that both of these
    sets have size at least $\gamma_1|U|/4$. Indeed, if we assume that, say,
    $|U_A| < \gamma_1|U|/4$, we have from the fact that $F[U]$ is a
    $\gamma_1p$-expander, and thus $\delta(F[U]) \geq \gamma_1(|U|-1)p$,
    \[
      \gamma_1|U|^2p/3 \leq e_F(U_A, U_B) \leq e_G(U_A, U_B) \leq
      \gamma_1|U|^2p/4 + \beta|U|/2,
    \]
    where the upper bound follows from $G$ being $(p,\beta)$-sparse. This leads
    to a contradiction as $\beta \leq \eta np/\log^3 n = o(|U|p)$.

    Let $Q_A \subseteq Q$ be defined as
    \[
      Q_A := \{v \in Q : \deg_G(v, U_A) \geq (\gamma_1^2/8)|U|p\},
    \]
    and suppose towards contradiction $|Q_A| \leq n/\log^2 n$. Using the fact
    that $G[U \cup Q]$ is a $\gamma_1p$-expander and $|U_A| \geq
    (\gamma_1/4)|U|$, we get
    \begin{equation}\label{eq:balance-set-k-3}
      (\gamma_1^2/4)|U||Q \cup U_B|p \leq e_G(U_A, Q \cup U_B) \leq e_G(U_A,
      U_B) + e_G(U_A, Q_A) + (\gamma_1^2/8)|Q||U|p.
    \end{equation}
    As $\beta \leq \eta np/\log^3 n$, $|U_A|, |U_B| = \Theta(n/\log^2 n)$, and
    $|Q| = \eps n$, from $G$ being $(p, \beta)$-sparse, we have
    \[
      e_G(U_A, U_B) \leq (1 + o(1))|U_A||U_B|p \qquad \text{and} \qquad e_G(U_A,
      Q_A) \leq o(|U||Q|p).
    \]
    Lastly, as $|U_B| = o(|Q|)$, the whole right hand side in
    \eqref{eq:balance-set-k-3} can be bounded by $(3\gamma_1^2/16)|Q||U|p$,
    which leads to a contradiction. In conclusion, $|Q_A| > n/\log^2 n$. This
    implies there exist sets $Q_A, Q_B$ of size $\floor{kn/\log^3n}$ as desired.
  \end{proof}

  Take an arbitrary ordering $\{v_1, \dotsc, v_q\}$ of the vertices in $Q_A \cup
  Q_B$, where $q := |Q_A \cup Q_B|$. We apply the Connecting Lemma
  (Lemma~\ref{lem:connecting-lemma}) to $G$ with $\gamma_1$ (as $\gamma$), $Q_A
  \cup Q_B$ (as $U$), $Y$ (as $W$), and the multigraph $M$ defined as
  \[
    V(M) = Q_A \cup Q_B \qquad \text{and} \qquad E(M) = \big\{ \{v_i, v_{i +
    1}\}_{i \in [q - 1]} \big\}.
  \]
  Since $G[Y]$ is a $\gamma_1p$-expander, by \eqref{eq:main-k-partition-lb-size}
  and \eqref{eq:main-k-degrees} all the assumptions
  Lemma~\ref{lem:connecting-lemma}~$\ref{cl-W-exp}$--$\ref{cl-U-deg}$ are
  satisfied. Perhaps the least obvious is the bound on $e(M)$ which holds as $q
  \leq 2k n/\log^3 n \leq |Y|/(C\log n)$
  (see~\eqref{eq:main-k-partition-lb-size}). We obtain a $v_1v_q$-path $P_Q$
  which contains all vertices of $Q_A \cup Q_B$.

  Let $W' := W \setminus V(H)$, $Y' := Y \setminus V(P_Q)$, $Q' := Q \setminus
  (Q_A \cup Q_B)$, and $V'' := V' \cup W' \cup Q' \cup Y'$. One easily checks,
  similarly as in \eqref{eq:min-deg-cov-V} and
  \eqref{eq:essential-min-deg-cov-V}, that $\delta(G[V'']) \geq \alpha|V''|p$
  and $\delta_{\alpha/64}(G[V'']) \geq (1/k+\alpha/2)|V''|p$. Consequently, we
  apply the Embedding Lemma (Lemma~\ref{lem:embedding-lemma}) with $\alpha/2$
  (as $\alpha$) to the graph $G[V'']$ to obtain a collection of $k - 1$ path
  forests $P_1, \dotsc, P_{k - 1}$ which contain at most $n/\log^3 n$ paths
  each.

  Our goal at this point is to connect all the paths belonging to $P_1$ together
  with $P_Q$ into the first cycle $C_1$, connect all the paths belonging to
  $P_2$ together with the absorber $H$ into the second cycle $C_2$, and the
  paths belonging to each $P_i$ for all $i \geq 3$ into a cycle $C_i$. Along the
  way of constructing the cycles $C_2, \dotsc, C_{k-1}$, we may {\em reuse} some
  of the vertices of $Q_A \cup Q_B$, which is not a problem.

  Set $\gamma' := \gamma_1^2/8$. Let $m_i$ denote the number of paths in $P_i$,
  for all $i \in [k - 1]$. Take an arbitrary ordering of the paths in each $P_i$
  and denote their endpoints by $s_i^j$ and $t_i^j$, for all $i \in [k - 1]$, $j
  \in [m_i]$. We again artificially make every vertex either $A$-expanding or
  $B$-expanding depending on their degree into $U \cap A$ and $U \cap B$,
  keeping at least $\gamma'|U|p$ edges for each vertex. This is possible due to
  \eqref{eq:main-k-degrees} and our choice of $\gamma_1$. Next, we construct a
  set $\{u_1, \dotsc, u_x\}$, for some $0 \leq x \leq 2kn/\log^3 n$, by
  sequentially choosing vertices from $Q_A$ or $Q_B$ depending on the difference
  in the number of $A$-pairs and $B$-pairs. It is easy to see that one can
  greedily choose such a set in order for

  \begin{minipage}{\textwidth}
    \begin{minipage}[t]{0.6\textwidth}
      \begin{align*}
        \cP = \big\{ & \{v_q, s_1^1\}, \{t_1^j, s_1^{j + 1}\}_{j \in [m_1 - 1]},
        \{t_1^{m_1}, v_1\}, \\
        & \{t_i^j, s_i^{j + 1}\}_{3 \leq i \leq k - 1, j \in [m_i - 1]},
        \{t_i^{m_i}, s_i^1\}_{i \in [k - 1]} \\
        & \{b, s_2^1\}, \{t_2^j, s_2^{j+1}\}_{j \in [m_2-1]}, \{t_2^{m_2},
        u_1\}, \{u_i, u_{i+1}\}_{i \in [x-1]}, \{u_x, a\}
        \big\}
      \end{align*}
    \end{minipage}
    \begin{minipage}[t]{0.3\textwidth}
      \begin{align*}
        & \text{\small\itshape (cycle $C_1$ including $P_Q$)} \\
        & \text{\small\itshape (cycles $C_3, \dotsc, C_{k - 1}$)} \\
        & \text{\small\itshape (cycle $C_2$ including $H$)} \\
      \end{align*}
    \end{minipage}%
  \end{minipage}
  to be such that the number of $A$-pairs and $B$-pairs in $\cP$ is the same.

  Finally, we apply the Connecting Lemma (Lemma~\ref{lem:connecting-lemma}) to
  the graph obtained from $G$ by substituting $G[U]$ by $F[U]$, and with
  $\gamma'$ (as $\gamma$), $V(G) \setminus U$ (as $U$), $U$ (as $W$), and the
  multigraph $M$ with the vertex set
  \[
    V(M) = \{a, b\} \cup \{s_i^j, t_i^j\}_{i \in [k - 1], j \in [m_i]} \cup
    \{v_i\}_{i \in [q]}
  \]
  and the edge set $E(M) = \cP$. This is indeed possible as $\ref{cl-W-exp}$
  $F[U]$ is a $\gamma_1p$-expander and thus a $\gamma'p$-expander as well,
  $\ref{cl-W-size}$ $|U| = \floor{\eps n/(C\log^2 n)}$ and $\beta \leq \eta
  np/\log^3 n$, and $\ref{cl-U-deg}$ all vertices have degree at least
  $\gamma'|U|p$ into $U$ due to Claim~\ref{cl:main-k-fix-balance} and
  \eqref{eq:main-k-degrees}. Therefore, we obtain $k - 1$ cycles $C_1, C_2,
  \dotsc, C_{k-1}$, where $C_2$ contains $P_Q$ as a subgraph and $C_1$ contains
  $H$ and possibly some vertices of $P_Q$ as a subgraph. As in the case $k = 2$,
  replacing $H$ in $C_1$ by an absorbing $ab$-path $P'$ using all the uncovered
  vertices of $U$ completes the proof.
\end{proof}

In the remainder of the paper we supply the missing proofs of the lemmas stated
above, each section fully being dedicated to one of the lemmas.

\section{The Inheritance Lemma}\label{sec:inheritance}

For the convenience of the reader, we repeat the statement of
Lemma~\ref{lem:inheritance-lemma}.

\inheritance*

\begin{proof}
  Assume that $R \subseteq V(G)$ is a set of size $r$ chosen uniformly at
  random. We aim to show that $R$ induces an $\Omega(p)$-expander with
  probability at least $1 - n^{-1}$ which in turn implies the lemma.

  Note that as $G$ is a $\gamma p$-expander, it has minimum degree at least
  $\gamma p (n - 1)$. Then, as $R$ is a uniformly random subset of size $r \geq
  C \log n/p$ standard application of Chernoff's inequality and the union bound
  shows that with probability at least $1 - o(n^{-2})$ (for $C$ large enough),
  we have
  \begin{equation}\label{eq:inh-min-degree}
    \delta(G[R]) \geq \gamma rp/2.
  \end{equation}

  To show that $G[R]$ is a $\gamma_1p$-expander, we need to verify that for all
  partitions $X \cup Y = R$, we have $e(X, Y) \geq \gamma_1 p |X||Y|$. We first
  show that if \eqref{eq:inh-min-degree} holds, then this is indeed the case for
  all partitions where one of the parts, say $X$, is smaller than $\gamma r/4$.
  For this, we use the fact that $G$, and hence also $G[R]$, is $(p,
  \beta)$-sparse. Applying Lemma \ref{lem:edges-out-small-set} to the graph
  $G[R]$ and using $|X| \leq \gamma r/4$ and \eqref{eq:inh-min-degree}, we
  obtain
  \[
    e(X, Y) \geq \gamma|X|rp/4 - \beta|X|.
  \]
  By choosing $C$ to be sufficiently large and from the assumption that $\beta
  \leq rp/C$ we further get
  \[
    e(X, Y) \geq \gamma |X|rp/4 - \beta |X| \geq \gamma|X|rp/5 \geq \gamma
    |X||Y|p/5.
  \]
  In particular, the expansion property holds with $\gamma_1 = \gamma/5$.

  The argument for the remaining case is more complicated and can be summarised
  as follows. We apply the sparse regularity lemma to the graph $G$, thus
  obtaining an $(\eps, \tilde p)$-regular partition $V_0 \cup V_1 \cup \dotsc
  \cup V_t$, where $\eps$ is a small positive constant and $\tilde p =
  e(G)/\binom{n}{2}$ is the density of $G$. The random set $R$ intersects each
  part of the regular partition in roughly the expected number of vertices.
  Moreover, whenever $(V_i, V_j)$ is an $(\eps, \tilde p)$-regular pair with
  density $d_{ij} \tilde p$, then the pair $(V_i \cap R, V_j \cap R)$ is
  $(\eps', d_{ij}\tilde p)$-\emph{lower-regular} with a slightly weaker
  parameter $\eps'$, which follows from Lemma~\ref{lem:small-subsets-regular}.
  Now if we have a partition $X \cup Y = R$, where $|X|, |Y| \geq \gamma r/4$,
  then we `approximate' it by a partition $X' \cup Y' = V(G) \setminus V_0$ by
  letting $X'$ (resp.\ $Y'$) be the union of the classes in the regular
  partition that intersect $X$ (resp.\ $Y$) in some $\Omega(r)$ vertices (note
  that the sets defined in this way may not be disjoint; there is thus a simple
  cleaning-up step to make sure that this is the case). Since $G$ is a $\gamma
  p$-expander and $V_0$ is small, we know that there are $\gamma p|X'||Y'|$
  edges in $G[X', Y']$, which implies that there are many dense regular pairs
  $(V_i, V_j)$ such that $V_i \subseteq X'$ and $V_j \subseteq Y'$. Because for
  such pairs the pair $(V_i \cap R, V_j \cap R)$ is lower-regular, $(V_i \cap X,
  V_j \cap Y)$ contains many edges, that is, there are many edges in $G[X, Y]$.
  We now give the details.

  Set $\delta = 1/e$ and $\tilde p = e(G)/\binom{n}{2}$, and let $d$ and $\eps'
  < d/2$ be sufficiently smaller than $\gamma$ in order to support the arguments
  that follow. Let $\eps_0 = \eps_{0_{\ref{lem:small-subsets-regular}}}(\eps',
  \delta)$ and $D = D_{\ref{lem:small-subsets-regular}}(\eps')$. Lastly, let
  $\eps$ be sufficiently small with respect to all previously chosen constants,
  let $t$ be as given by Lemma~\ref{lem:sparse-regularity-lemma} applied for
  $\eps$, and assume that $C$ is large enough (in particular, $C$ is much larger
  than $t$). For ease of reference we point out that we have the following
  hierarchy of constants
  \[
    0 < \eps \ll \eps_0 \ll \eps' \ll d \ll \gamma < 1 \ll t \ll C.
  \]
  Observe that $G$ being a $\gamma p$-expander which is $(p, \beta)$-sparse
  implies $\gamma p/2 \leq \tilde p \leq 4p$.

  We apply the sparse regularity lemma (Lemma~\ref{lem:sparse-regularity-lemma})
  to $G$ for $\eps$ to obtain an $(\eps, \tilde p)$-regular partition $(V_i)_{i
  = 0}^{t}$ of $V(G)$, where $V_0$ is the exceptional class of size $|V_0| \leq
  \eps n$. Since $|V_1| = \dotsb = |V_t|$, we have $(1 - \eps)n/t \leq |V_i|
  \leq n/t$ for all $i \in [t]$.

  For each $0 \leq i \leq t$, set $U_i := R \cap V_i$. Note that each $U_i$ is a
  random subset of $V_i$ whose size follows a hypergeometric distribution with
  mean $r|V_i|/n \geq C \log n \cdot \min\{\eps, (1 - \eps)/t\}$. By our choice
  of $C$, Chernoff's inequality together with the union bound shows that with
  probability at least $1 - o(n^{-2})$, all sets $U_i$, $i \in [t]$, satisfy
  $|U_i| = (1 \pm \eps) r|V_i|/n$ and similarly $|U_0| \leq (1 + \eps) \eps r$.
  Recalling that $(1 - \eps) n/t \leq |V_i| \leq n/t$, this implies in
  particular $|U_i| = (1 \pm \eps') r/t$ and $|U_0| \leq \eps' r$.

  Consider now a fixed $(\eps, \tilde p)$-regular pair $(V_i, V_j)$ with density
  at least $d \tilde p$. By definition, $(V_i, V_j)$ is then also $(\eps/d,
  d\tilde p)$-lower-regular. We apply Lemma~\ref{lem:small-subsets-regular} with
  $\eps'$, $\delta$, $d \tilde p$ (as $p$), and $|U_i|, |U_j|$ (as $q_1, q_2$)
  to get that with probability at least
  \[
    1 - \delta^{\min{\{q_1, q_2\}}} = 1 - \delta^{(1 - \eps') C\log n/t} \geq 1
    - o(n^{-2}),
  \]
  the pair $(U_i, U_j)$ is $(\eps', d\tilde p)$-lower-regular. Indeed, we can
  apply the lemma since $|U_i|, |U_j| \geq (1 - \eps')r/t \geq D(d \tilde
  p)^{-1}$ due to the assumption of the lemma on $r$ and the fact that $\gamma
  \tilde p/2 \leq p$, and $\eps/d < \eps_0$. Therefore, the union bound over at
  most $t^2$ pairs $(V_i, V_j)$ shows that with probability at least $1 -
  o(n^{-2})$, the following two properties hold:
  \begin{enumerate}[(i), font=\itshape]
    \item\label{ui-size} $|U_0| \leq \eps' r$ and $(1 - \eps')r/t \leq (1 -
      \eps)r |V_i|/n \leq |U_i| \leq (1 + \eps')r/t$, for all $i\in [t]$, and
    \item\label{ui-lower-reg} whenever $(V_i, V_j)$ is an $(\eps, \tilde
      p)$-regular pair with density $d_{ij} \tilde p \geq d \tilde p$, then
      $(U_i, U_j)$ is $(\eps', d_{ij} \tilde p)$-lower-regular.
  \end{enumerate}
  From these two properties we show the expansion of $G[R]$ deterministically
  for all partitions $X \cup Y = R$ where $|X|, |Y| \geq \gamma r/4$. Let us fix
  such a partition and assume without loss of generality that $|X| \leq r/2 \leq
  |Y|$. As outlined above, we approximate the given partition of $R$ by a
  certain partition of the vertices of $V(G) \setminus V_0$, or, to be more
  precise, by a partition of the classes $V_1, \dotsc, V_t$. We now describe
  precisely how to do this.

  Let $\cI_X = \{i \in [t] : |U_i \cap X| \geq \gamma |U_i|/10\}$ and
  $\cI_Y = \{i \in [t] : |U_i \cap Y|\geq \gamma |U_i|/10\}$. Note that
  $\cI_X \cup \cI_Y = [t]$, although $\cI_X$ and $\cI_Y$ are not necessarily
  disjoint. Using $\ref{ui-size}$ we have
  \[
    |\cI_X| \cdot (1 + \eps')r/t \geq \sum_{i \in \cI_X} |U_i| \geq |X| -
    \sum_{i \notin \cI_X} \gamma |U_i|/10 - |U_0| \geq \gamma r/4 - (1 +
    \eps')\gamma r/10 - \eps'r
  \]
  and hence
  \[
    |\cI_X| \geq \frac{\gamma r/4 - (1 + \eps')\gamma r/10 - \eps' r}{(1 +
    \eps')r/t} \geq \gamma t/8,
  \]
  using the fact that $\eps'$ is sufficiently small compared to $\gamma$. In the
  same way, using $|Y| \geq r/2$, we also obtain $|\cI_Y| \geq t/4$. Since
  $\cI_X \cup \cI_Y = [t]$, it follows that there exists a {\em partition}
  $\cJ_X \cup \cJ_Y = [t]$ such that $\cJ_X \subseteq \cI_X$ and $\cJ_Y
  \subseteq \cI_Y$ and such that $|\cJ_X| \geq \gamma t/8$ and $|\cJ_Y| \geq
  t/4$.

  Next, we say that a pair $(V_i, V_j)$ for $1 \leq i < j \leq t$ is {\em good}
  if it is $(\eps, \tilde p)$-regular with density $d_{ij} \tilde p \geq d
  \tilde p$, and if $(i, j) \in \cJ_X \times \cJ_Y$. From $\ref{ui-lower-reg}$
  it follows that if $(V_i, V_j)$ is good, then $(U_i, U_j)$ is $(\eps', d_{ij}
  \tilde p)$-lower-regular. The lower-regularity and $|U_i \cap X| \geq \gamma
  |U_i|/10 \geq \eps'|U_i|$ and $|U_j \cap Y| \geq \gamma |U_j|/10 \geq
  \eps'|U_j|$ in turn imply
  \[
    e(U_i \cap X, U_j \cap Y) \geq (1-\eps')d_{ij}\tilde p|U_i \cap X||U_j \cap
    Y| \geq \frac{\gamma^2}{100} \cdot (1-\eps')d_{ij}\tilde p|U_i||U_j|.
  \]
  Together with $\ref{ui-size}$, we obtain
  \[
    d_{ij}\tilde p|U_i||U_j| \geq (1-\eps')^3 d_{ij}\tilde p |V_i||V_j| r^2/n^2
    \geq e(V_i, V_j) r^2/(2n^2),
  \]
  which then implies $e(U_i \cap X, U_j \cap Y) \geq \gamma^2 e(V_i, V_j)
  r^2/(200n^2)$. In particular,
  \begin{equation}\label{eq:inh-r1r2}
    e(X, Y) \geq \sum_{\substack{1 \leq i < j \leq t \\ (V_i, V_j) \text{
    good}}} e(U_i \cap X, U_j \cap Y) \geq \frac{\gamma^2 r^2}{200n^2}
    \sum_{\substack{1 \leq i < j \leq t \\ (V_i, V_j) \text{ good}}} e(V_i,
    V_j).
  \end{equation}

  To complete the proof, we show that many edges of $G$ go between good pairs
  $(V_i, V_j)$. Set $X' = \bigcup_{i \in \cJ_X} V_i$ and $Y' = \bigcup_{i \in
  \cJ_Y} V_i$. Note that $X' \cup Y' = V(G) \setminus V_0$. Since $|\cJ_X| \geq
  \gamma t/8$ and $|V_i| \geq (1 - \eps)n/t$, we have $|X'| \geq \gamma n/10$,
  for $\eps$ small enough. Similarly, $|\cJ_Y| \geq t/4$ implies $|Y'| \geq
  n/5$. From the assumption that $G$ is a $\gamma p$-expander, we obtain
  \[
    e(X', Y' \cup V_0) \geq \gamma p |X'||Y'| \geq \gamma^2 n^2p/50.
  \]
  On the other hand, we have
  \[
    e(V_0, X') \leq p|V_0||X'| + \beta \sqrt{|V_0||X'|} \leq 2\eps n^2p.
  \]
  where we used $|V_0| \leq \eps n$ and $\beta \leq rp/C \leq np/C \leq \eps np$
  (provided $C$ is large enough). From these it follows that
  \begin{equation}\label{eq:w1w2}
    e(X', Y') \geq e(X', Y' \cup V_0) - 2\eps n^2p \geq \gamma^2 n^2p/100,
  \end{equation}
  using that $\eps$ is sufficiently small compared to $\gamma$.

  Using once more the fact that $G$ is $(p, \beta)$-sparse, we have $e(V_i, V_j)
  \leq |V_i||V_j|p + \beta \sqrt{|V_i||V_j|} \leq 2n^2p/t^2$ for all $i, j \in
  [t]$. By the definition of an $(\eps, \tilde p)$-regular partition, there are
  at most $\eps t^2$ pairs $(V_i, V_j)$ that are not $(\eps, \tilde p)$-regular.
  Hence, the number of edges in $G[X', Y']$ which go between non-regular pairs
  $(V_i, V_j)$ is at most $2\eps n^2p$. Moreover, the number of edges in $G[X',
  Y']$ with one endpoint in each part of a pair $(V_i, V_j)$ with density below
  $d \tilde p$ is clearly at most $dn^2\tilde p \leq 4dn^2p$. Lastly, note that
  by definition of $X'$ and $Y'$, for every edge $e \in G[X', Y']$, there is
  some $(V_i, V_j)$ with $(i, j) \in \cJ_X \times \cJ_Y$ such that $e \in G[V_i,
  V_j]$. Then \eqref{eq:w1w2} and the definition of a good pair give
  \[
    \sum_{\substack{1 \leq i < j \leq t \\ (V_i, V_j) \text{ good}}} e(V_i,
    V_j) \geq e(X', Y') - 2\eps n^2\tilde p - 4dn^2p \geq \gamma^2
    n^2p/200,
  \]
  using again that $\eps$, $\eps'$, and $d$ are small compared to $\gamma$.
  Finally, with \eqref{eq:inh-r1r2}, we get
  \[
    e(X, Y) \geq \frac{\gamma^2r^2}{200n^2} \sum_{\substack{1 \leq i < j \leq t
    \\ (V_i, V_j) \text{ good}}} e(V_i, V_j) \geq \frac{\gamma^4}{200^2}
    \cdot r^2p \geq \gamma_1 |X||Y|p,
  \]
  for $\gamma_1 = \gamma^4/40000$.
\end{proof}

\section{The Connecting Lemma}\label{sec:connecting-lemma}

The main result of this section is essentially that if $G$ is a pseudorandom
graph and $W$ is a sufficiently large subset inducing a good expander, then $G$
is `highly connected via $W$', i.e., it is possible to connect many given pairs
of vertices using short paths whose internal vertices lie entirely in $W$. We
formally restate the lemma for the convenience of the reader.

\connecting*

The main step towards proving the Connecting Lemma is to prove the following
technical auxiliary lemma whose proof we defer to the end of the section.

\begin{lemma}\label{lem:resilient-connecting}
  For every $\gamma \in (0, 1)$, the following holds for all sufficiently large
  $n$, all $p \in (0, 1)$, and all $\beta > 0$. Let $G$ be a $(p, \beta)$-sparse
  graph on $n$ vertices and let $U, W \subseteq V(G)$ be disjoint subsets such
  that:
  \begin{enumerate}[(i)]
    \item\label{cl-res-W-exp} $G[W]$ is a $\gamma p$-expander,
    \item\label{cl-res-W-size} $|W| \geq 10\gamma^{-1} \beta \sqrt{\log n}/p$,
      and
    \item\label{cl-res-U-deg} every vertex $u \in U$ satisfies $\deg(u, W) \geq
      \gamma|W|p$.
  \end{enumerate}
  Let $\ell = \ceil{10\log n/(\gamma\log\log n)}$. Then for every subset $Z
  \subseteq W$ with
  \[
    |Z| \leq \min{\{\gamma|W|/20, |U|\log n\}},
  \]
  there is a vertex $x \in U$ such that $|N^\ell(x, W \setminus Z)| > |W|/2$.
\end{lemma}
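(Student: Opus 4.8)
The plan is to reduce the claim to the growth of breadth-first balls, the key preliminary being to replace $W\setminus Z$ by a slightly smaller set $W'$ on which $G$ still induces an honest expander. This step is necessary because $G[W\setminus Z]$ itself need not be an expander: removing the adversarial set $Z$ can strip some vertices of almost all of their neighbours. To repair this, I would run the standard clean-up, repeatedly deleting from $W\setminus Z$ any vertex whose degree into the current set drops below $\gamma|W|p/8$; call the surviving set $W'$ and the deleted set $D$. Applying the $\gamma p$-expansion of $G[W]$ to the partition $\{D,\ W\setminus D\}$ (with $W\setminus D=W'\cup Z$) lower-bounds $e_G(D,W')$, the deletion rule upper-bounds it by $|D|\cdot\gamma|W|p/8$, and feeding in the $(p,\beta)$-sparseness bound on $e_G(D,Z)$ together with the hypothesis $|W|\ge 10\gamma^{-1}\beta\sqrt{\log n}/p$ (i.e.\ $\beta\le\gamma|W|p/(10\sqrt{\log n})$) yields $|D|<|Z|/(10\log n)$. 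Hence $|W\setminus W'|=|Z|+|D|<2|Z|\le 2\min\{\gamma|W|/20,\ |U|\log n\}$; in particular $|W'|\ge 0.9|W|$, $\delta(G[W'])\ge\gamma|W|p/8$, and $G[W']$ is an $\Omega(\gamma)p$-expander (small parts expand by the min-degree bound via Lemma~\ref{lem:edges-out-small-set}, large parts because the $\gamma p$-expansion of $G[W]$ loses only the few edges incident to $W\setminus W'$, which the sparseness controls).

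Now choose the starting vertex. From $e_G(U,W\setminus W')\le p|U||W\setminus W'|+\beta\sqrt{|U||W\setminus W'|}$ and $|W\setminus W'|\le\min\{\gamma|W|/10,\ 2|U|\log n\}$, an averaging argument produces $x\in U$ with $\deg_G(x,W\setminus W')<\gamma|W|p/4$, hence $\deg_G(x,W')\ge\gamma|W|p-\gamma|W|p/4>0$ (using $\deg_G(x,W)\ge\gamma|W|p$). Write $S_i:=N_G^i(x,W')$; since $W'\subseteq W\setminus Z$ we have $N_G^\ell(x,W\setminus Z)\supseteq S_\ell$, so it is enough to show $|S_\ell|>|W|/2$. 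The balls grow by $S_{i+1}=S_i\cup N_G(S_i,W'\setminus S_i)$, and I would estimate this in two regimes. While $|S_i|$ is below a small multiple of $\gamma|W|$: the min-degree bound gives $e_G(S_i,W'\setminus S_i)\ge|S_i|\gamma|W|p/10$, and writing $T=N_G(S_i,W'\setminus S_i)$, the $(p,\beta)$-sparseness applied to the pair $(S_i,T)$ forces $p|S_i||T|+\beta\sqrt{|S_i||T|}\ge|S_i|\gamma|W|p/10$; according to which summand dominates, and using $\beta\le\gamma|W|p/(10\sqrt{\log n})$, this gives either $|T|\ge\gamma|W|/20$ or $|T|\ge(\tfrac14\log n)|S_i|$ — i.e.\ the ball either jumps to a fixed fraction of $|W|$ or multiplies by a factor $\Omega(\log n)$. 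Once $|S_i|$ is a fixed fraction of $|W|$ (with $|W'\setminus S_i|\ge 0.44|W|$, say), the inherited $\gamma p$-expansion plus sparseness give $|N_G(S_i,W'\setminus S_i)|\ge(\gamma/8)|W'\setminus S_i|$, so the complement $|W'\setminus S_i|$ contracts by a factor $1-\gamma/8$ each step. Starting from $|S_1|=\deg_G(x,W')\ge 1$, the first regime lasts $O(\log n/\log\log n)$ steps and the second $O(1/\gamma)$ steps, so within $\ell=\lceil 10\log n/(\gamma\log\log n)\rceil$ steps we reach $|S_\ell|>0.9|W|-0.44|W|>|W|/2$.

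The main obstacle is the first paragraph. Because $G[W\setminus Z]$ can genuinely fail to expand, one must pass to $W'$, and the quantitatively delicate point is that the clean-up set $D$ is tiny, of size $O(|Z|/\log n)$; this is precisely where the hypotheses $|W|\ge 10\gamma^{-1}\beta\sqrt{\log n}/p$ and $|Z|\le|U|\log n$ are both needed (the latter to guarantee a starting vertex $x$ retaining many neighbours in $W'$). The two-regime ball-growth is then routine, but it is essential that the $\Omega(\log n)$ multiplicative gain in the small regime — which is what makes $\Theta(\log n/\log\log n)$ steps enough — comes from the $(p,\beta)$-sparseness upper bound on $e_G(S_i,N_G(S_i,\cdot))$, and not from a crude ``divide $e_G(S_i,W'\setminus S_i)$ by the maximum degree'' estimate, which is too weak by a factor of order $\sqrt{|W|}$.
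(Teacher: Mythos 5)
Your proposal is correct in substance, but it takes a genuinely different route to the single most delicate point of this lemma, namely how to pass from ``$U$ expands collectively'' to ``some single vertex $x\in U$ expands''. The paper never repairs $W\setminus Z$: it works with the sets $N^{i}(U_j,W\setminus Z)$ directly, applying the $\gamma p$-expansion of $G[W]$ to such a set viewed inside $W$ and subtracting $|Z|$ afterwards (Claim~\ref{cl:many-neighbours}), and it extracts a single vertex by a nested refinement $U=U_0\supseteq U_1\supseteq\cdots$ with $|U_{j+1}|\leq\lceil |U_j|/\log n\rceil$, paying an extra $O(1/\gamma)$ in path length per refinement round (which is why the paper needs the full $10\log n/(\gamma\log\log n)$). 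You instead clean up $W\setminus Z$ to an expander $W'$ with minimum degree $\Omega(\gamma|W|p)$, showing the deleted set $D$ has size $O(|Z|/\log n)$, and then get the start vertex in one shot by averaging $e_G(U,W\setminus W')$ over $U$ --- correctly identifying that this is exactly where both $\beta\leq\gamma|W|p/(10\sqrt{\log n})$ and $|Z|\leq|U|\log n$ enter (the paper uses $|Z|\leq|U|\log n$ inside Claim~\ref{cl:many-neighbours} instead). The growth engine itself --- the dichotomy from $(p,\beta)$-sparseness giving either a jump to $\Omega(\gamma|W|)$ or multiplication by $\Omega(\log n)$ --- is the same as in Claim~\ref{cl:many-neighbours}. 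Your route buys a slightly shorter path length (about $\log n/\log\log n+O(1/\gamma)$ rather than the paper's $10\log n/(\gamma\log\log n)$) and, via Markov, shows a constant fraction of $U$ works, at the cost of the clean-up bookkeeping; the paper's route avoids any modification of $W\setminus Z$ at the price of the iterated averaging.

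Two small points to tighten in a write-up. First, the bound $|D|<|Z|/(10\log n)$ needs $|W\setminus D|\geq|W|/2$ when you invoke the expansion of $G[W]$ on $\{D,W\setminus D\}$; formalise this by the usual stopping argument (consider the first moment the deleted set exceeds $|Z|/(10\log n)$ and derive the contradiction there). Second, your endgame constants do not quite close: stopping the contraction at $|W'\setminus S_i|\geq 0.44|W|$ with $|W'|\geq 0.9|W|$ only yields $|S_\ell|\geq 0.46|W|$, which is not larger than $|W|/2$; lower the contraction threshold to, say, $0.39|W|$ (the second-regime estimate is insensitive to this choice), after which $|S_\ell|\geq|W'|-0.39|W|>|W|/2$ as required.
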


\begin{proof}[Proof of Lemma~\ref{lem:connecting-lemma}]
  We define an $\ell$-uniform hypergraph $\cH$ on the vertex set $E(M) \cup W$,
  where for every edge $e \in E(M)$ and every set $Y \subseteq W$ of size $\ell
  - 1$, we add a hyperedge $\{e\} \cup Y$ if and only if $G$ contains a path
  joining the endpoints of $e$ whose internal vertices belong to $Y$. Hence, if
  there is an $E(M)$-saturating matching in $\cH$, then $G$ contains an $(M, W,
  \ell)$-matching. We use Haxell's criterion (Theorem~\ref{thm:haxell-matching})
  to show that $\cH$ contains an $E(M)$-saturating matching.

  For this, let $S \subseteq E(M)$ and $Z \subseteq W$ be subsets with $|Z| \leq
  2\ell|S|$. Since $M$ has maximum degree at most $\Delta$, we can greedily find
  a subset $S' \subseteq S$ of size $|S|/(2\Delta)$ such that the edges in $S'$
  are pairwise disjoint. In other words, there exist disjoint sets $U_1, U_2
  \subseteq U$ of size $|U_1| = |U_2| \geq |S|/(2\Delta)$, and a bijection $\phi
  \colon U_1 \to U_2$, such that for every $u \in U_1$, the pair $\{u,
  \phi(u)\}$ belongs to $S'$. It is enough to show that for some $u \in U_1$,
  $G$ contains a $u\phi(u)$-path of length at most $\ell$ whose internal
  vertices are all in the set $W \setminus Z$. Indeed, this implies that $\cH$
  contains an edge intersecting $S'$ (and hence $S$) and not intersecting $Z$,
  showing that Haxell's criterion is satisfied. In the remainder we show that
  such a vertex $u \in U_1$ exists.

  Let $\ell_1 = \ceil{10 \log n/(\gamma\log\log n)}$ and let $U_1' \subseteq
  U_1$ be the set consisting of all vertices $u \in U_1$ for which
  $|N^{\ell_1}(u, W \setminus Z)| > |W|/2$. We claim that $|U_1'| > |U_1|/2$.
  Towards a contradiction suppose $|U_1'| \leq |U_1|/2$ and let $U_1'' = U_1
  \setminus U_1'$. Note, $|U_1''| \geq |U_1|/2 \geq |S|/(4\Delta)$. Since $|Z|
  \leq 2\ell|S| \leq |U_1''|\log n$ and also $|Z| \leq 2\ell|S| \leq 2\ell e(M)
  \leq 2|W|/C$, we have
  \[
    |Z| \leq \min{\{ \gamma|W|/20, |U_1''|\log n \}}
  \]
  for sufficiently large $C$. By the assumption $|W| \geq C\beta\sqrt{\log
  n}/p$, we also see that $|W| \geq 10\gamma^{-1} \beta \sqrt{\log n}/p$.
  Therefore, by Lemma~\ref{lem:resilient-connecting} applied for $U_1''$ (as
  $U$), $W$, $Z$, and $\ell_1$ (as $\ell$), there exists a vertex $u \in U_1''$
  such that $|N^{\ell_1}(u, W \setminus Z)| > |W|/2$, which is a contradiction
  with the definition of $U_1''$. It follows that, indeed, $|U_1'| > |U_1|/2$.

  Analogously, the set $U_2' \subseteq U_2$ of all vertices $u \in U_2$ for
  which $|N^{\ell_1}(u, W\setminus Z)| > |W|/2$ has size $|U_2'| > |U_2|/2$. It
  then follows that there is some $u \in U_1'$ such that $\phi(u) \in U_2'$.
  Thus, from both $u$ and $\phi(u)$ it is possible to reach strictly more
  than $|W|/2$ vertices of $W \setminus Z$ via paths of length at most $\ell_1$
  whose internal vertices belong to $W \setminus Z$. Hence, there must exist a
  $u\phi(u)$-path of length at most $2\ell_1 \leq \ell$ whose internal vertices
  all lie in $W \setminus Z$. This completes the proof.
\end{proof}

We now proceed with the proof of Lemma~\ref{lem:resilient-connecting}.

\begin{proof}[Proof of Lemma~\ref{lem:resilient-connecting}]
  Without loss of generality, assume $|U| \leq |W|/(2\log n)$. Indeed, if $U$
  contains more than $|W|/(2\log n)$ elements, then replacing it by a subset of
  size $\floor{|W|/(2\log n)}$ does not violate any of the assumptions
  $\ref{cl-res-W-exp}$--$\ref{cl-res-U-deg}$ nor the bound on the size of $Z$.

  We first prove an auxiliary claim about expansion of certain subsets which we
  use in the proof of the lemma.

  \begin{claim}\label{cl:many-neighbours}
    If $X \subseteq U \cup W$ is such that $|X| \geq |U|$ and $e(X, W \setminus
    X) \geq \gamma|X||W|p/2$, then
    \[
      \big| N(X, W \setminus (X \cup Z)) \big| \geq \min{\{|X|\log n,
      \gamma|W|/5\}}.
    \]
  \end{claim}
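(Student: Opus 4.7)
The plan is to bound the neighbourhood $N = N(X, W \setminus (X \cup Z))$ from below by bounding from above the total set $N' := N(X, W \setminus X)$, noting that $N' \subseteq N \cup (Z \cap (W \setminus X))$ and hence $|N| \geq |N'| - |Z|$. The edges counted in $e(X, W \setminus X)$ all go into $N'$, so sparsity gives
\[
  \frac{\gamma|X||W|p}{2} \leq e(X, N') \leq p|X||N'| + \beta \sqrt{|X||N'|}.
\]
I then split into two cases depending on whether the linear term $p|X||N'|$ or the error term $\beta\sqrt{|X||N'|}$ dominates the right-hand side.

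In the first case, suppose $|N'| > \gamma|W|/4$. Then using $|Z| \leq \gamma|W|/20$ we get
\[
  |N| \geq |N'| - |Z| > \frac{\gamma|W|}{4} - \frac{\gamma|W|}{20} \geq \frac{\gamma|W|}{5},
\]
which gives the second term in the minimum.

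In the second case, $|N'| \leq \gamma|W|/4$, so $p|X||N'| \leq \gamma|X||W|p/4$ and therefore the $\beta$-error term must carry at least the remaining $\gamma|X||W|p/4$. This yields $\beta\sqrt{|X||N'|} \geq \gamma|X||W|p/4$, i.e.\
\[
  |N'| \geq \frac{\gamma^2 |X||W|^2 p^2}{16\beta^2}.
\]
Plugging in the hypothesis $\beta \leq \gamma|W|p/(10\sqrt{\log n})$ from assumption \ref{cl-res-W-size} (rearranged) gives $|N'| \geq \tfrac{25}{4}|X|\log n$. Using $|Z| \leq |U|\log n \leq |X|\log n$ (since $|X| \geq |U|$), we conclude
\[
  |N| \geq |N'| - |Z| \geq \tfrac{25}{4}|X|\log n - |X|\log n \geq |X|\log n,
\]
which gives the first term in the minimum.

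There is no real obstacle here; the claim is essentially a one-line counting argument from $(p,\beta)$-sparsity, and the only thing one has to be a little careful about is keeping track of the difference between $N$ and $N'$ via the set $Z$, and checking that the constants in the bound on $\beta$ provide enough slack to absorb the factor $|Z|$ in the second case. The lower bound on $|W|$ in terms of $\beta$ is precisely what makes the second case produce a bound of order $|X|\log n$ rather than just $|X|$.
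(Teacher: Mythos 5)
Your argument is correct and is essentially the same as the paper's: both observe $e(X,W\setminus X)=e(X,N')$, apply $(p,\beta)$-sparsity to bound $e(X,N')$, and then fork into a ``$|N'|$ is a constant fraction of $|W|$'' case (absorbing $|Z|\le\gamma|W|/20$) and a ``the $\beta\sqrt{|X||N'|}$ term dominates'' case (absorbing $|Z|\le|X|\log n$ using assumption~\ref{cl-res-W-size} rearranged as $\beta\le\gamma|W|p/(10\sqrt{\log n})$). The only cosmetic difference is where the case boundary is drawn---you split on $|N'|\gtrless\gamma|W|/4$ while the paper splits on $\sqrt{|N'|}\gtrless\gamma\sqrt{|X|}|W|p/(4\beta)$---but both are the same fork in the quadratic inequality and both produce constants with comfortable slack.
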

  \begin{proof}
    Let us denote $N(X, W \setminus X)$ by $Y$. Recall that since $G$ is $(p,
    \beta)$-sparse, we have
    \[
      e(X, W \setminus X) = e(X, Y) \leq |X||Y|p + \beta\sqrt{|X||Y|}.
    \]
    Combining this with the assumption $e(X, W \setminus X) \geq
    \gamma|X||W|p/2$, we obtain
    \[
      |Y| \geq \frac{\gamma |W|}{2} - \frac{\beta \sqrt{|Y|}}{\sqrt{|X|}p}.
    \]
    If $\sqrt{|Y|} \leq \gamma \sqrt{|X|}|W|p/(4\beta)$, then the above
    inequality gives $|Y| \geq \gamma |W|/4$. In this case, it follows from the
    assumption $|Z| \leq \gamma |W|/20$ that
    \[
      \big| N(X, W \setminus (X \cup Z)) \big| \geq \gamma|W|/4 - |Z| \geq
      \gamma|W|/5.
    \]
    On the other hand, if $\sqrt{|Y|} > \gamma \sqrt{|X|}|W|p/(4\beta)$ then
    using the assumption $|W| \geq 10\gamma^{-1}\beta\sqrt{\log n}/p$ we get the
    lower bound $\sqrt{|Y|} \geq 2\sqrt{|X|\log n}$, or, equivalently, $|Y| \geq
    4|X|\log n$. Since $|Z| \leq |U|\log n \leq |X|\log n$ we obtain
    \[
      \big| N(X, W \setminus (X \cup Z)) \big| \geq 4|X|\log n - |Z| \geq
      |X|\log n,
    \]
    completing the proof of the claim.
  \end{proof}

  We are now ready to prove the lemma. For every integer $j \geq 0$, set
  \[
    \ell_j := \ceil{\log n/(\gamma \log\log n)} + (j + 1) \ceil{5/\gamma}.
  \]
  The goal is to construct a sequence of subsets $U = U_0 \supseteq U_1
  \supseteq U_2 \supseteq \dotsb$ such that for every $j \geq 0$ we have
  $|U_{j+1}| \leq \ceil{|U_j|/\log n}$ and
  \[
    |N^{\ell_j}(U_j, W \setminus Z)| > |W|/2.
  \]
  Note that $|U_j| \leq \ceil{|U_{j - 1}|/\log n}$ implies that either $|U_j| =
  1$ or $|U_j| \leq 2|U_{j - 1}|/\log n$, and therefore, for $j = \ceil{\log
  n/(\log\log n - \log 2)}$, we have $|U_j| = 1$. Moreover, for this $j$, we
  have $\ell_j \leq 10\log n/(\gamma \log\log n) \leq \ell$. In this way, the
  statement of the lemma follows, provided we can indeed construct the sets
  $U_0, U_1, U_2, \dots$ with the properties mentioned above.

  For the set $U_0 = U$, we only need to verify that $|N^{\ell_0}(U_0, W
  \setminus Z)| > |W|/2$. Note that as $e(U_0, W \setminus U_0) \geq
  \gamma|U_0||W|p$ due to $\ref{cl-res-U-deg}$, Claim~\ref{cl:many-neighbours}
  for $U_0$ (as $X$) implies
  \[
    |N^1(U_0, W \setminus Z)| \geq \min{\{ |U|\log n, \gamma|W|/5 \}} \geq |U|,
  \]
  where the last inequality holds because of our assumption $|U| \leq |W|/(2\log
  n)$. Moreover, if for some $i \geq 1$, we have $|U| \leq |N^i(U_0, W \setminus
  Z)| \leq |W|/2$, then as
  \begin{align*}
    e\big( N^i(U_0, W \setminus Z), W \setminus N^i(U_0, W \setminus Z) \big) &
    \osref{$\ref{cl-res-W-exp}$}\geq \gamma \big|N^i(U_0, W \setminus Z)\big|
    \big|W \setminus N^i(U_0, W \setminus Z)\big|p \\
    & \geq \gamma |N^i(U_0, W \setminus Z)||W|p/2,
  \end{align*}
  Claim~\ref{cl:many-neighbours} applied to $N^i(U_0, W \setminus Z)$ (as $X$)
  shows
  \[
    |N^{i+1}(U_0, W \setminus Z)| \geq |N^i(U_0, W \setminus Z)| +
    \min{\{|U|\log^{i+1} n, \gamma|W|/5\}}.
  \]
  One easily sees that for $\ell_0 = \ceil{\log n/(\gamma \log\log n)} +
  \ceil{5/\gamma}$, we have $|N^{\ell_0}(U_0, W \setminus Z)| > |W|/2$ as
  required.

  Suppose now we have constructed the set $U_j$ as above and want to construct
  $U_{j + 1}$. We thus assume
  \[
    |N^{\ell_j}(U_j, W \setminus Z)| > |W|/2.
  \]
  By averaging, there exists a subset $U_{j + 1} \subseteq U_j$ of size $|U_{j +
  1}| \leq \ceil{|U_j|/\log n}$ such that
  \[
    |N^{\ell_j}(U_{j+1}, W \setminus Z)| > |W|/(2\log n) \geq |U|.
  \]
  Successively applying Claim~\ref{cl:many-neighbours} at most $\ceil{5/\gamma}$
  times, it is easy to see that
  \[
    |N^{\ell_j + \ceil{5/\gamma}}(U_{j+1}, W \setminus Z)| > |W|/2.
  \]
  Indeed, in a single step the set $N^{\ell_j}(U_{j+1}, W \setminus Z)$ expands
  to a set of size
  \[
    \min{\{ |W|/(2\log n) \cdot \log n, \gamma |W|/5 \}} \geq \gamma|W|/5,
  \]
  and in at most $\ceil{5/\gamma} - 1$ additional steps, it expands to a set of
  size greater than $|W|/2$. This completes the proof of the lemma.
\end{proof}

\section{The Absorber Lemma}\label{sec:absorbers}

This section is dedicated to the construction of the absorbers and the proof of
the Absorbing Lemma (Lemma~\ref{lem:absorbing-lemma}). We recall the definition
of an absorber first.

\absorber*

The most `natural' way to construct an absorber would be to first find
structures which `absorb' one vertex of $X \cup Y$ at a time, and then depending
on the sets $X'$ and $Y'$ individually decide which vertex needs to be
`absorbed'. Unfortunately, this is not possible in our case for the following
reason. A single-vertex absorber $A_x$ would need to have two $ab$-paths: one
containing $x$, one not containing $x$, and both containing all other vertices
of $A_x$. It is an easy observation that such a structure necessarily contains
an odd cycle. Since the graph $G$ might be bipartite, we cannot hope to find a
single-vertex absorber described above in it.

In order to circumvent this we instead first build a collection of {\em
two-vertex absorbers} $H_{xy}$, each of which contains two paths between its
endpoints, one containing all the vertices of $H_{xy}$ including $x$ and $y$ and
the other containing all vertices of $H_{xy}$ except $x$ and $y$. Such an
absorber is depicted in Figure~\ref{fig:two-vertex-absorber}.

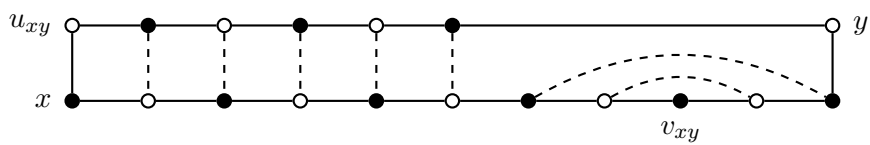
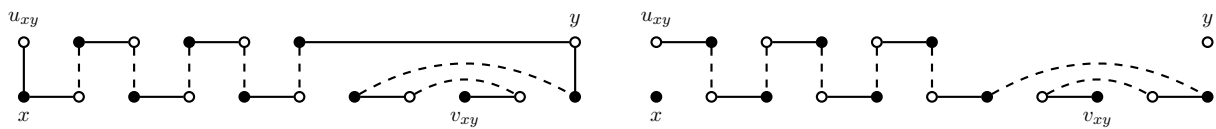
\begin{figure}[!htbp]
  \begin{subfigure}[b]{\textwidth}
    \centering
    \begin{tikzpicture}
  \abspic{3}{5}
  \node[label=left:$x$] at (x){};
  \node[label=right:$y$] at (y){};

  \draw[thick] (x) -- (a1) -- (b1) -- (a2) -- (b2) -- (a3) -- (b3) -- (y);
  \draw[thick] (x) -- (c1) -- (d1) -- (c2) -- (d2) -- (c3) -- (d3) -- (c4) -- (d4) -- (c5) -- (d5) -- (y);

  \node[label=left:$u_{xy}$] at (a1){};
  \node[label=below:$v_{xy}$] at (d4){};
  
  \draw[thick,dashed] (b1) -- (c1) (a2) -- (d1) (b2) -- (c2) (a3) -- (d2) (b3) -- (c3);
  \draw[thick,dashed] (d3) to[bend left] (d5) (c4) to[bend left] (c5);
\end{tikzpicture}
    \caption{An absorber $H_{xy}$ of unrealistically small size.}
  \end{subfigure}
  \par\bigskip
  \begin{subfigure}[b]{0.48\textwidth}
    \centering
    \begin{tikzpicture}[scale=0.725, every node/.style={scale=0.725}]
  \abspic{3}{5}
  \node[label=below:$x$] at (x){};
  \node[label=above:$y$] at (y){};

  \node[label=above:$u_{xy}$] at (a1){};
  \node[label=below:$v_{xy}$] at (d4){};

  \draw[thick,dashed] (b1) -- (c1) (a2) -- (d1) (b2) -- (c2) (a3) -- (d2) (b3) -- (c3);
  \draw[thick,dashed] (d3) to[bend left] (d5) (c4) to[bend left] (c5);

  \draw[thick] (x) -- (a1) (b1) -- (a2) (b2) -- (a3) (b3) -- (y);
  \draw[thick] (x) -- (c1) (d1) -- (c2) (d2) -- (c3) (d3) -- (c4) (d4) -- (c5) (d5) -- (y);
\end{tikzpicture}
    \caption{The `absorbing' $u_{xy}v_{xy}$-path.}
  \end{subfigure}
  \hfill
  \begin{subfigure}[b]{0.48\textwidth}
    \centering
    \begin{tikzpicture}[scale=0.725, every node/.style={scale=0.725}]
  \abspic{3}{5}
  \node[label=below:$x$] at (x){};
  \node[label=above:$y$] at (y){};

  \node[label=above:$u_{xy}$] at (a1){};
  \node[label=below:$v_{xy}$] at (d4){};

  \draw[thick,dashed] (b1) -- (c1) (a2) -- (d1) (b2) -- (c2) (a3) -- (d2) (b3) -- (c3);
  \draw[thick,dashed] (d3) to[bend left] (d5) (c4) to[bend left] (c5);

  \draw[thick] (a1) -- (b1) (a2) -- (b2) (a3) -- (b3) (y);
  \draw[thick] (c1) -- (d1) (c2) -- (d2) (c3) -- (d3) (c4) -- (d4) (c5) -- (d5);
\end{tikzpicture}
    \caption{The `non-absorbing' $u_{xy}v_{xy}$-path.}
  \end{subfigure}
  \caption{An example of a two-vertex absorber for $x$ and $y$. The colours of
  the vertices correspond to the colour classes of the bipartite graph $G$.
  Dashed lines represent disjoint paths of length at most $\ell$ (see the proof
  below) and solid lines the actual edges.}
  \label{fig:two-vertex-absorber}
\end{figure}

The following lemma provides us with a `template' that we use to combine several
two-vertex absorbers into an actual absorber. It is similar to a lemma of
Montgomery~\cite[Lemma~10.7]{montgomery2019spanning}, which is proven in nearly
the same way.

\begin{lemma}\label{lem:template}
  There is an integer $n_0 \in \N$ such that, for every $n \geq n_0$, there exist
  a bipartite graph $G = (A, B, E)$ with $|A| = |B| = 2n$ and $\Delta(G) \leq 40$,
  as well as subsets $A' \subseteq A$ and $B' \subseteq B$ with $|A'| = |B'| = n$,
  satisfying the following. For every set $Z \subseteq A' \cup B'$ with $|Z \cap
  A'| = |Z \cap B'|$, the graph $G[V(G) \setminus Z]$ contains a perfect
  matching.
\end{lemma}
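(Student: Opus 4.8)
\emph{Proof proposal.}
The plan, following Montgomery, is to build $G$ from a few bounded-degree random bipartite expanders arranged on a bipartition in which the ``always present'' parts of $A$ and $B$ are robustly linked to everything else. Write $A = A' \sqcup A''$ and $B = B' \sqcup B''$ with $|A'| = |A''| = |B'| = |B''| = n$, fix a suitable absolute constant $d$ (one may take $d = 14$), and let $G$ be the union of three independent uniformly random $d$-regular bipartite graphs: $\Gamma_0$ between $A''$ and $B''$, $\Gamma_1$ between $A'$ and $B''$, and $\Gamma_2$ between $A''$ and $B'$, with no edges between $A'$ and $B'$. Every vertex of $A' \cup B'$ then has degree $d$ and every vertex of $A'' \cup B''$ has degree $2d$, so $\Delta(G) = 2d \le 40$.

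Next I would record the standard expansion properties of a uniformly random $d$-regular bipartite graph $\Gamma$ with parts of size $n$: w.h.p.\ every subset $T$ of one of the parts satisfies
\[
  |N_\Gamma(T)| \ge \min\bigl\{\,2|T|,\ (1-\theta)n\,\bigr\}, \qquad\text{and}\qquad |N_\Gamma(T)| \ge n - \tfrac{1}{2}\bigl(n - |T|\bigr) \ \text{ whenever } n - |T| \le \tfrac{2}{3}n,
\]
where $\theta = \theta(d) < \tfrac15$ can be made as small as desired by enlarging $d$. Both bounds are routine first-moment (union-bound) estimates; this is the only place the bound $\Delta(G)\le 40$ enters, as it pins $d$ to a fixed (if generous) constant. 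Fix an outcome of the construction for which $\Gamma_0, \Gamma_1, \Gamma_2$ all have these two properties.

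It then remains to verify, for every $Z \subseteq A'\cup B'$ with $|Z\cap A'| = |Z\cap B'| = z$, that the balanced bipartite graph $G - Z$ (both sides of size $2n - z$) satisfies Hall's condition and hence has a perfect matching. The key observation is that $Z$ avoids $A''$ and $B''$, so those vertices all survive; thus for $S \subseteq A \setminus Z$, writing $S_1 := S \cap A''$ and $S_2 := S \cap A'$ (note $|S_2| \le n - z$), we have $N_{G-Z}(S) \cap B'' = N_{\Gamma_0}(S_1) \cup N_{\Gamma_1}(S_2)$ and $N_{G-Z}(S) \cap B' = N_{\Gamma_2}(S_1) \cap (B'\setminus Z)$. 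I would split into cases. If $|S| \le (1-\theta)n$, the neighbourhood inside $B''$ already suffices, since
\[
  |N_{G-Z}(S)| \ge \max\bigl\{\,|N_{\Gamma_0}(S_1)|,\ |N_{\Gamma_1}(S_2)|\,\bigr\} \ge \min\bigl\{\,2\max(|S_1|,|S_2|),\ (1-\theta)n\,\bigr\} \ge |S|.
\]
If $|S| > (1-\theta)n$, then $\max(|S_1|,|S_2|) > (1-\theta)n/2$, and I would treat separately the sub-case $|S_1| \ge (1-\theta)n/2$ (apply the second expansion bound to $S_1$ through both $\Gamma_0$ and $\Gamma_2$, and use $|S_2| \le n - z$) and the sub-case $|S_2| \ge (1-\theta)n/2 > |S_1|$ (apply the second bound to $S_2$ through $\Gamma_1$ and the first bound to $S_1$ through $\Gamma_2$, again using $|S_2| \le n - z$). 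In each sub-case a short computation gives $|N_{G-Z}(S)| \ge |S|$, so Hall's theorem produces the required perfect matching of $G - Z$.

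The substance of the argument is in two places. One is the choice of structure --- the split of $A$ and $B$ into a ``flexible'' and an ``always present'' half and the decision to leave the $A'\times B'$ block empty --- which is what lets the surviving parts compensate for an arbitrary balanced deletion $Z$. The other is the case analysis for $G-Z$: one must juggle the deletion size $z$, the sizes $|S_1|,|S_2|$, and the available expansion, which is only comfortably (not wildly) stronger than the minimum needed for the constant $d$ that $\Delta(G)\le 40$ allows. Everything else, including the first-moment expansion estimates, is standard, and the whole construction mirrors Montgomery's template lemma.
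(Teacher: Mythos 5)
Your construction is correct, but it is genuinely different from the paper's. The paper builds a single auxiliary graph $H$ on parts of size $n$ as the union of $20$ independent random perfect matchings, proves two properties (doubling expansion for small sets and ``every two sets of size $\lceil n/4\rceil$ span an edge''), and then applies a duplication trick: every vertex of $H$ is cloned, $A'$ and $B'$ are taken to be one copy of each side, and since every deletable vertex has an always-present twin with an identical neighbourhood, the Hall verification for $G - Z$ becomes short and essentially insensitive to which balanced set $Z$ is removed. You instead use a four-block layout with three independent random $d$-regular bipartite graphs and an empty $A'\times B'$ block, and verify Hall's condition directly; this trades the duplication trick for an extra deficiency-type expansion bound and for bookkeeping that plays the deletion size $z$ against the constraint $|S_2| \le n - z$. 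I checked your case analysis and it does close: in the sub-case $|S_1| \ge (1-\theta)n/2$ the two deficiency bounds give $|N(S)| \ge (n+|S_1|) - z \ge |S_1| + |S_2|$, and in the sub-case $|S_2| \ge (1-\theta)n/2 > |S_1|$ the inequality reduces to $|S_1| \ge z - (n-|S_2|)/2$, which follows from $z \le n - |S_2|$. Two caveats worth recording if you write this up. First, several of these inequalities, and the first-moment estimates behind your expansion claims, hold with essentially no slack at your stated constants: with $d = 14$ the union bound for the doubling property is only barely negative in the exponent at sets of size about $0.4n$, which is exactly the range you need for $\theta$ just below $1/5$; since $\Delta(G) \le 40$ permits $d = 20$, you should either raise $d$ or carry out the entropy computation explicitly rather than calling it routine. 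Second, your deficiency bound is most cleanly proved by observing it is equivalent to doubling expansion of the opposite side for sets of size at most $n/3$, and the estimates are simplest in the union-of-$d$-random-matchings model (as in the paper) rather than for a uniformly random $d$-regular bipartite graph. Compared to the paper, your approach avoids duplication and gives maximum degree $2d$, while the paper's duplication yields a noticeably shorter and more robust Hall argument.
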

\begin{proof}
  Fix disjoint sets $A_1$ and $B_1$ with $|A_1| = |B_1| = n$. Let $H$ be a
  random bipartite graph with parts $A_1$ and $B_1$ obtained by inserting $20$
  independent random perfect matchings between $A_1$ and $B_1$ (and merging
  eventual multiple edges). We first show that w.h.p.\ $H$ satisfies the
  following properties:
  \begin{enumerate}[(i), font=\itshape]
    \item\label{templatei} for every $X \subseteq A_1$ with $|X| \leq n/2$, we
      have $|N_H(X)| \geq 2|X|$,
    \item\label{templateii} for every $Y \subseteq B_1$ with $|Y| \leq n/2$, we
      have $|N_H(Y)| \geq 2|Y|$, and
    \item\label{templateiii} for any two subsets $X \subseteq A_1$ and $Y
      \subseteq B_1$ with $|X| = |Y| = \ceil{n/4}$, we have $e_H(X, Y) > 0$.
  \end{enumerate}

  Given two sets $X \subseteq A_1$ and $Y \subseteq B_1$, the probability that
  $N_M(X) \subseteq Y$ in a random matching $M$ is $\binom{|Y|}{|X|}
  \binom{n}{|X|}^{-1}$. Therefore, for a fixed integer $t \in [n/4]$, the
  probability that there is a set $X \subseteq A_1$ with $|X| = t$ and $|N_H(X)|
  \leq 2t$ is at most
  \[
    \binom{n}{t} \binom{n}{2t} \bigg( \binom{2t}{t}\binom{n}{t}^{-1} \bigg)^{20}
    \leq \Big( \frac{en}{t} \Big)^t \Big( \frac{en}{2t} \Big)^{2t} \Big(
    \frac{2t}{n} \Big)^{20t} = \bigg(2e^3 \bigg( \frac{2t}{n} \bigg)^{17}
    \bigg)^t.
  \]
  By a union bound over all $t \in [n/4]$, and looking separately at the cases
  $t \leq\log n$ and $\log n \leq t \leq n/4$, the probability that
  $\ref{templatei}$ fails tends to $0$ as $n \to \infty$. In the same way,
  exchanging the roles of $A_1$ and $B_1$, one can show that w.h.p.\ the
  statement in $\ref{templateii}$ holds as well. Lastly, the probability that
  $\ref{templateiii}$ fails is similarly at most
  \[
    \binom{n}{n/4}^2 \bigg( \binom{3n/4}{n/4} \binom{n}{n/4}^{-1} \bigg)^{20}
    \leq (4e)^{n/2} \Big( \frac{3}{4} \Big)^{20n/4} \leq 2^{-n/4}.
  \]
  Thus, w.h.p.\ $\ref{templateiii}$ holds.

  We now take any graph $H$ as above that satisfies $\ref{templatei}$,
  $\ref{templateii}$, and $\ref{templateiii}$---such a graph exists for all
  large enough $n$---and define $G$ by duplicating the vertices in $A_1$ and
  $B_1$ and keeping the edges as in $H$, except that a single edge of $H$ now
  corresponds to four edges in $G$. More precisely, we define $A = (A_1 \times
  \{0\}) \cup (A_1 \times \{1\})$ and $B = (B_1 \times \{0\}) \cup (B_1 \times
  \{1\})$ and insert an edge between $(a, i) \in A$ and $(b, j) \in B$ whenever
  there is an edge between $a$ and $b$ in $H$. We let $A' = A_1 \times \{0\}
  \subseteq A$ and $B' = B_1 \times \{0\} \subseteq B$. Note that because $H$ is
  the union of $20$ matchings, $G$ has maximum degree at most $40$, even after
  duplicating the vertices.

  To complete the proof, we need to show that for any $Z \subseteq A' \cup B'$
  with $|Z \cap A'| = |Z \cap B'|$, the graph $G[V(G) \setminus Z]$ contains a
  perfect matching. Let $m := |Z \cap A'| = |Z \cap B'|$ and set $A_Z := A
  \setminus Z$ and $B_Z := B \setminus Z$. We show that $G[V(G) \setminus Z]$
  contains a perfect matching by verifying Hall's condition, i.e., by showing
  that for every set $X \subseteq A_Z$, we have $|N_G(X, B_Z)| \geq |X|$.

  Assume first $|X| \leq n/2$. Let $X'$ be the larger of the sets $A' \cap X$
  and $(A \setminus A') \cap X$. Then $|X|/2 \leq |X'| \leq n/2$. By
  $\ref{templatei}$ we have $|N_G(X, B_Z)| \geq |N_G(X', B_Z)| \geq |N_H(X')|
  \geq 2|X'| \geq |X|$.

  Suppose next $n/2 < |X| \leq 3n/2 - m$. Let $X'$ be the larger of the sets $A'
  \cap X$ and $(A \setminus A') \cap X$, and note $|X'| \geq n/4$. By
  definition, there are no edges between $X$ and $Y := B_Z \setminus N_G(X,
  B_Z)$. If we assume $|N_G(X, B_Z)| \leq |X|$, then we have $|Y| \geq (2n - m)
  - (3n/2 - m) \geq n/2$. Let $Y'$ be the larger of the sets $B' \cap Y$ and $(B
  \setminus B') \cap Y$. The fact that there are no edges between $X'$ and $Y'$
  then contradicts $\ref{templateiii}$.

  Finally, suppose $3n/2 - m < |X| \leq 2n - m$. Assume towards contradiction
  that $N_G(X, B_Z)$ is contained in a set $Q \subseteq B_Z$ of size $|X| - 1$
  and let $Y := B_Z \setminus Q$. Note that $|Y| = 2n - m - (|X| - 1) \leq n/2$.
  Let $Y'$ be the larger of the sets $B' \cap Y$ and $(B \setminus B') \cap Y$
  and thus $|Y'| \geq |Y|/2$. However, all neighbours of $Y'$ are contained in
  the set $A_Z \setminus X$ of size $2n - m - |X| = |Y| - 1$. This contradicts
  $\ref{templateii}$.
\end{proof}

We are ready to give a proof of the Absorber Lemma. We first restate it for
convenience.

\absorbing*

\begin{proof}[Proof of Lemma~\ref{lem:absorbing-lemma}]
  Let $\gamma_1 = \gamma_{\ref{lem:inheritance-lemma}}(\gamma)$ and $C = 4 \cdot
  \max{\{ C_{\ref{lem:inheritance-lemma}}(\gamma),
  C_{\ref{lem:connecting-lemma}}(\gamma_1, 80) \}}$. Let us write $W_A := W \cap
  A$ and $W_B := W \cap B$. We first show that both of these sets have size at
  least $\gamma |W|/4$. Indeed, if we assume that, say, $|W_A| \leq \gamma
  |W|/4$, we have
  \[
    \gamma|W|^2p/3 \leq e_G(W_A, W_B) \leq p|W_A||W_B| + \beta \sqrt{|W_A||W_B|}
    \leq \gamma p|W|^2/4 + \beta |W| < \gamma p|W|^2/3,
  \]
  where in the first inequality we use the fact that $G$ is a $\gamma
  p$-expander, in the second that it is a $(p, \beta)$-sparse graph, and in the
  last that $|W| \geq C\beta \sqrt{\log n}/p$. Thus, by an analogous argument
  for $|W_B|$, we conclude
  \begin{equation}\label{eq:wawb}
    |W_A| \geq \gamma |W|/4 \qquad \text{and} \qquad |W_B| \geq \gamma |W|/4.
  \end{equation}

  Let now $a$ and $b$ be two arbitrary vertices such that $a \in W_A$ and $b \in
  W_B$. These vertices are going to be the endpoints of our absorber. We start
  by making some preparations.

  Let $m := \max{\{ |U \cap A|, |U \cap B| \}}$ and let $W_A' \subseteq W_A
  \setminus \{a\}$ and $W_B' \subseteq W_B \setminus \{b\}$ be subsets with
  $|W_A'| = 2m - |U \cap A|$ and $|W_B'| = 2m - |U \cap B|$ chosen uniformly at
  random among all subsets of this size. Note that it is possible to choose
  subsets of this size because of \eqref{eq:wawb} and the assumption $|U| \leq
  |W|/(C \log^2 n)$. In the following, we write $U_A := W_A' \cup (U \cap A)$
  and $U_B := W_B' \cup (U \cap B)$; both of these sets have size $2m$.

  Furthermore, let $W_1 \cup W_2 \cup W_3 = W \setminus (W_A' \cup W_B' \cup
  \{a, b\})$ be an equipartition chosen uniformly at random. Since $|W_A'|,
  |W_B'| \leq 2|U| \leq |W|/\log^2 n$, we have $|W_i| \geq |W|/4$. Therefore, by
  our choice of $C$, the assumptions of the lemma together with the Inheritance
  Lemma (Lemma~\ref{lem:inheritance-lemma}), Chernoff's inequality, and the
  union bound, ensure that w.h.p.\ the following three properties are satisfied
  for all $i \in [3]$:
  \begin{enumerate}[(i), font=\itshape]
    \item\label{abs-Wi-exp} $G[W_i]$ is a $\gamma_1p$-expander,
    \item\label{abs-Wi-size} $|W_i| \geq (C/4) \cdot \beta\sqrt{\log n}/p$, and
    \item\label{abs-Wi-deg} for every vertex $u \in U \cup W$, we have $\deg(u,
      W_i) \geq \gamma_1|W_i|p$.
  \end{enumerate}
  In the following we assume that these properties hold deterministically for
  all $W_i$.

  Let $G_T = (A_T, B_T, E_T)$ with $|A_T| = |B_T| = 2m$ and $\Delta(G_T) \leq
  40$ be a graph given by Lemma~\ref{lem:template}. Furthermore, let $A_T'
  \subseteq A_T$ and $B_T' \subseteq B_T$ with $|A_T'| = |B_T'| = m$ be subsets
  given by the former lemma with the property that for every $Z \subseteq A_T'
  \cup B_T'$ with $|Z \cap A_T'| = |Z \cap B_T'|$, the graph $G_T[V(G_T)
  \setminus Z]$ contains a perfect matching. Lastly, let $f \colon V(G_T) \to
  U_A \cup U_B$ be a bijection mapping the vertices of $A_T$ onto $U_A$ and
  $B_T$ onto $U_B$ and such that $U \cap A \subseteq f(A_T')$ and $U \cap B
  \subseteq f(B_T')$.

  The construction of the absorber now proceeds by three independent
  applications of the Connecting Lemma. Set $\ell = \ceil{30\log
  n/(\gamma_1\log\log n)}$. Firstly, we apply it with $\gamma_1$ (as $\gamma$),
  $U_A \cup U_B$ (as $U$), $W_1$ (as $W$), and with the multigraph $M$ with the
  vertex set $U_A \cup U_B$ and the edge set defined as follows: for every edge
  $\{x, y\}\ \in E(G_T)$, add a \emph{double} edge $\{f(x), f(y)\}$ to $M$.
  Since $\Delta(G_T) \leq 40$, it follows that $\Delta(M) \leq 80$. Moreover,
  the assumption $|U|\leq |W|/(C\log^2 n) \leq 4|W_1|/(C\log^2 n)$ shows that
  the assumption $e(M) \leq |W_1|/(C\ell)$ is satisfied. Finally,
  $\ref{abs-W-exp}$--$\ref{abs-U-deg}$ show that assumptions
  Lemma~\ref{lem:connecting-lemma}~$\ref{cl-W-exp}$--$\ref{cl-U-deg}$ are
  satisfied. Therefore, we obtain for every edge $\{x, y\} \in E(G_T)$ two
  $f(x)f(y)$-paths of length at most $\ell$ such that all paths are internally
  vertex-disjoint and use only vertices from $W_1$. For a given edge $\{x, y\}
  \in E(G_T)$, we denote these two $f(x)f(y)$-paths by $P_{xy}$ and $Q_{xy}$ and
  let
  \[
    P_{xy} = f(x), a_p^1, b_p^1, a_p^2, \dotsc, a_p^{\ell_p}, b_p^{\ell_p}, f(y)
    \qquad \text{and} \qquad Q_{xy} = f(x), a_q^1, b_q^1, a_q^2, \dotsc,
    a_q^{\ell_q}, b_q^{\ell_q}, f(y),
  \]
  where $\ell_p = (|P_{xy}| - 2)/2$, $\ell_q = (|Q_{xy}| - 2)/2$, $\ell_q \geq
  \ell_p$ (w.l.o.g.), and $a_p^i, a_q^j \in A$ and $b_p^i, b_q^j \in B$. Note
  that since $f(x)$ and $f(y)$ lie in different colour classes of $G$, both
  paths necessarily have odd length.

  Given a collection of paths $\cP = \{P_{xy}, Q_{xy} : \{x, y\} \in
  E(G_T)\}$ as above, let
  \[
    U_{\cP} := \bigcup_{\{x, y\} \in E_T} V(P_{xy}) \cup V(Q_{xy}) \setminus
    \{f(x), f(y)\}.
  \]
  Next, we apply the Connecting Lemma with $\gamma_1$ (as $\gamma$), $U_\cP$ (as
  $U$), $W_2$ (as $W$), and a multigraph $M$ defined as follows: the vertex set
  of $M$ is just $U_{\cP}$ and the edge set is the union of
  \begin{align*}
    E(M_{xy}) := \big\{ & \{b_p^i, a_q^i\}_{1 \leq i \leq \ell_p}, \{ a_p^{i+1},
      b_q^i \}_{1 \leq i \leq \ell_p - 1}, \\
    & \{b_q^{\ell_p + i - 1}, b_q^{\ell_q - i + 1}\}_{1 \leq i \leq
    \ceil{(\ell_q - \ell_p)/2}} , \{a_q^{\ell_p + i}, a_q^{\ell_q - i + 1}\}_{1
    \leq i \leq \ceil{(\ell_q - \ell_p)/2}} \big\},
  \end{align*}
  for all $\{x, y\} \in E(G_T)$. The edge set is much easier to `define'
  visually---it is given by the dashed lines in
  Figure~\ref{fig:two-vertex-absorber} for every $\{x, y\} \in E(G_T)$.

  It is easy to verify that the assumptions of the Connecting Lemma are all
  satisfied---perhaps the least evident being that $e(M) \leq |W_2|/(C\ell)$,
  which holds because $M$ has at most $e(G_T) \cdot 4\ell \leq 320m \ell \leq
  320|U|\ell$ edges and $|U| \leq |W|/(C\log^2n) \leq |W_2|/(320 C\ell^2)$.
  Therefore, we obtain an $(M, W_2, \ell)$-matching, that is replace the dashed
  edges in Figure~\ref{fig:two-vertex-absorber}, for all $\{x, y\} \in E(G_T)$,
  by internally vertex-disjoint paths in $G$ whose internal vertices all belong
  to $W_2$.

  Lastly, for every $\{x, y\} \in E(G_T)$ denote by $u_{xy}$ and $v_{xy}$ the
  vertices $a_p^1$ and $b_q^{(\ell_p + \ell_q)/2}$ (assuming $\ell_q$ is even,
  otherwise this is $a_q^{(\ell_p + \ell_q + 1)/2})$, respectively (the ones as in
  Figure~\ref{fig:two-vertex-absorber}) and let us fix an arbitrary ordering
  $\{x_1, y_1\}, \dotsc, \{x_{m_T}, y_{m_T}\}$ of the edges of $G_T$, where $m_T
  := e(G_T)$. We now apply the Connecting Lemma for the third and last time with
  $V(M) := \{u_{x_i y_i}, v_{x_i y_i}\}_{i \in [m_T]} \cup \{a, b\}$ (as $U$),
  $W_3$ (as $W$), and the edge set of $M$
  \[
    E(M) := \big\{ \{a, u_{x_ 1y_1}\}, \{v_{x_i y_i}, u_{x_{i + 1} y_{i +
    1}}\}_{i \in [m_T - 1]}, \{v_{x_{m_T} y_{m_T}}, b\} \big\}.
  \]

  Similarly as above, one easily checks that all the assumptions of the
  Connecting Lemma are satisfied and hence we obtain an $(M, W_3,
  \ell)$-matching which connects all the two-vertex absorbers into one large
  absorber $H$. For a more natural, visual representation we depict the obtained
  structure on Figure~\ref{fig:absorber}.

  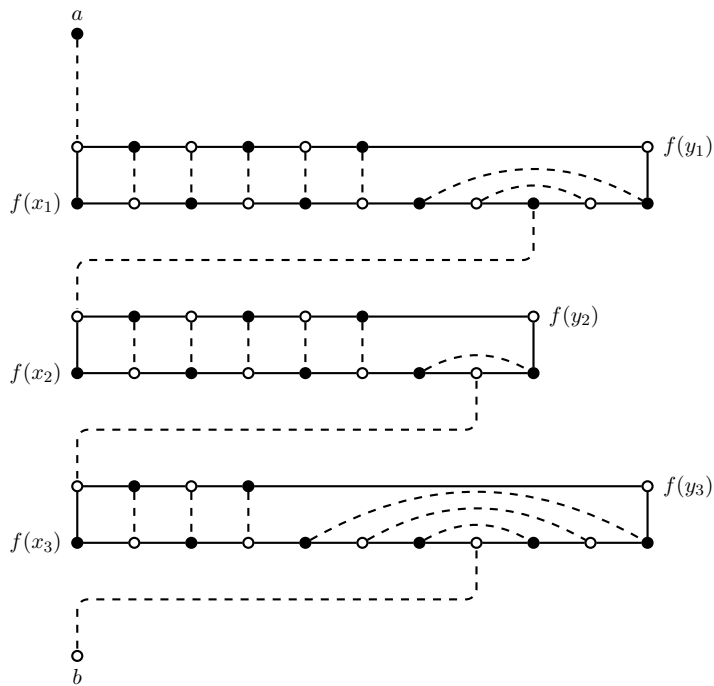
\begin{figure}[!htbp]
    \centering
    \begin{tikzpicture}[scale=0.75, every node/.style={scale=0.75}]
  \abspic{3}{5}
  \draw[thick] (x) -- (a1) -- (b1) -- (a2) -- (b2) -- (a3) -- (b3) -- (y);
  \draw[thick] (x) -- (c1) -- (d1) -- (c2) -- (d2) -- (c3) -- (d3) -- (c4) -- (d4) -- (c5) -- (d5) -- (y);

  \node[label=left:$f(x_1)$] at (x){};
  \node[label=right:$f(y_1)$] at (y){};

  \node (in0) at (a1){};
  \node (out0) at (d4){};

  \draw[thick,dashed] (b1) -- (c1) (a2) -- (d1) (b2) -- (c2) (a3) -- (d2) (b3) -- (c3);
  \draw[thick,dashed] (d3) to[bend left] (d5) (c4) to[bend left] (c5);
  \node[n1][label=above:$a$] (out) at (1,3){};
  \draw[thick,dashed] (out) -- (in0);

  \begin{scope}[yshift=-3cm]
    \abspic{3}{4}
    \node[label=left:$f(x_2)$] at (x){};
    \node[label=right:$f(y_2)$] at (y){};
  
    \node (in1) at (a1){};
    \node (out1) at (c4){};
  
    \draw[thick] (x) -- (a1) -- (b1) -- (a2) -- (b2) -- (a3) -- (b3) -- (y);
    \draw[thick] (x) -- (c1) -- (d1) -- (c2) -- (d2) -- (c3) -- (d3) -- (c4) -- (d4) -- (y);
    \draw[thick,dashed] (b1) -- (c1) (a2) -- (d1) (b2) -- (c2) (a3) -- (d2) (b3) -- (c3);
    \draw[thick,dashed] (d3) to[bend left] (d4);
    \draw[thick,dashed,rounded corners] (out0) -- +(0,-1) -| (in1);
  \end{scope}

  \begin{scope}[yshift=-6cm]
    \abspic{2}{5}
    \node[label=left:$f(x_3)$] at (x){};
    \node[label=right:$f(y_3)$] at (y){};

    \node (in2) at (a1){};
    \node (out2) at (c4){};

    \draw[thick] (x) -- (a1) -- (b1) -- (a2) -- (b2) -- (y);
    \draw[thick] (x) -- (c1) -- (d1) -- (c2) -- (d2) -- (c3) -- (d3) -- (c4) -- (d4) -- (c5) -- (d5) -- (y);
    \draw[thick,dashed] (b1) -- (c1) (a2) -- (d1) (b2) -- (c2);
    \draw[thick,dashed] (d3) to[bend left] (d4) (c3) to[bend left] (c5) (d2) to[bend left] (d5);
    \node[n2][label=below:$b$] (in) at (1,-2){};
    \draw[thick,dashed,rounded corners] (out1) -- +(0,-1) -| (in2);
    \draw[thick,dashed,rounded corners] (out2) -- +(0,-1) -| (in);
  \end{scope}
\end{tikzpicture}
    \caption{The absorber $H$. The vertices $f(x_1), f(x_2), f(x_3)$ and
    respectively $f(y_1), f(y_2), f(y_3)$ are not necessarily distinct (they
    would be distinct only if $G_T$ were itself a perfect matching); however,
    all other vertices are actually distinct.}
    \label{fig:absorber}
  \end{figure}

  It remains to show that the graph constructed in this way is a $(U \cap A, U
  \cap B)$-absorber with endpoints $a$ and $b$. For this, suppose that $A'
  \subseteq U \cap A$ and $B' \subseteq U \cap B$ are subsets such that $|A'| =
  |B'|$. Then we let $Z := f^{-1}(A' \cup B')$ and note that $Z$ is a subset of
  $A_T' \cup B_T'$ that intersects each set $A_T'$ and $B_T'$ in the same number
  of vertices. Hence, by the defining property of $G_T$, the graph $G_T[V(G_T)
  \setminus Z]$ contains a perfect matching $M$. We can then find an $ab$-path
  using all vertices except those in $A' \cup B'$, as follows: for each edge
  $\{x, y\}$ in the given perfect matching take a $u_{xy}v_{xy}$-path which
  includes all vertices of $H_{xy}$; for all other edges take a
  $u_{xy}v_{xy}$-path which includes all vertices of $H_{xy}$ except for $f(x)$
  and $f(y)$. Since the edges in $M$ form a perfect matching, it is clear that
  this is indeed a path (that is, no vertex is used twice) and that this path
  visits each vertex of the absorber except those contained in the set $A' \cup
  B'$.
\end{proof}

\section{The Partitioning Lemma}\label{sec:partitioning-lemma}

In this section we give a proof of the Partitioning Lemma which allows us to
partition every $(p, o(np))$-sparse graph satisfying a certain minimum degree
condition into linear-sized expanders. The proof uses the following notions of
\emph{good} and \emph{perfect} partitions.

\begin{definition}[$(c,\alpha)$-good, $(c,\alpha, \gamma)$-perfect]
  \label{def:good}
  Let $G$ be a graph on $n$ vertices. We say that a partition $V(G) = V_0 \cup
  \dotsb \cup V_\ell$ of the vertex set of $G$ is \emph{$(c,\alpha)$-good} if
  \begin{enumerate}[label=(L\arabic*)]
    \item\label{L-size-bad} $|V_0| \leq \alpha n$, and
    \item\label{L-min-deg} $\delta(G[V_i]) \geq (c + \alpha/2^{\ell})np$ for
      every $i \in [\ell]$.
  \end{enumerate}
  We say that the partition is \emph{$(c, \alpha, \gamma)$-perfect} if it
  additionally satisfies
  \begin{enumerate}[label=(L\arabic*), resume]
    \item\label{L-non-extremal} $G[V_i]$ is a $\gamma p$-expander for every $i
      \in [\ell]$.
  \end{enumerate}
\end{definition}

A first step towards proving Lemma~\ref{lem:partitioning-lemma} is proving the
following auxiliary lemma.

\begin{lemma}\label{lem:perfect-partition}
  For all $c, \alpha \in (0, 1)$, there exist positive $\gamma(\alpha, c)$ and
  $\eta(\alpha, c)$ such that the following holds for all sufficiently large
  $n$. Let $p \in (0, 1)$ and $\beta \leq \eta np$ and let $G$ be a $(p,
  \beta)$-sparse graph with $n$ vertices and minimum degree at least $(c +
  \alpha)np$. Then for some integer $1 \leq \ell < 1/c$, there exists a $(c,
  \alpha, \gamma)$-perfect partition $V(G) = V_0 \cup V_1 \cup \dotsb \cup
  V_\ell$ of the vertex set of $G$.
\end{lemma}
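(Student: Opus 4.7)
The plan is to construct the partition by iterative refinement. I start with the trivial partition $V_0 = \emptyset$, $V_1 = V(G)$, which is $(c,\alpha)$-good by hypothesis (taking $\ell=1$, so the threshold $(c+\alpha/2)np$ is met). At each stage I have a $(c,\alpha)$-good partition with some number $\ell$ of non-exceptional parts. If the current partition is already $(c,\alpha,\gamma)$-perfect, I stop. Otherwise, some $G[V_i]$ fails to be a $\gamma p$-expander, giving a bipartition $V_i = X \cup Y$ with $e(X,Y) < \gamma p |X||Y|$, which I use to refine. The refinement replaces the part $V_i$ by two new parts $X^* \subseteq X$ and $Y^* \subseteq Y$ obtained by \emph{cleanup}: as long as some vertex has induced degree below the new threshold $(c + \alpha/2^{\ell+1})np$ in its side, remove one such vertex and put it into $V_0$. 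By construction the resulting parts satisfy the new degree condition, and the untouched parts still do since $(c+\alpha/2^{\ell+1})np \le (c+\alpha/2^{\ell})np$.

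The proof then reduces to two quantitative estimates. First, I must control the growth of $V_0$. For a single refinement, let $s := |X \setminus X^*|$ be the number of vertices moved out of $X$. Every removed vertex $v$ satisfies $\deg_G(v, V_i \setminus X_k) > \alpha np/2^{\ell+1}$ at the moment of its removal from $X_k$, and since the cleanup is monotone this survives as $\deg_G(v, V_i \setminus X^*) > \alpha np/2^{\ell+1}$. Summing over removed vertices and splitting $V_i \setminus X^* = Y \cup (X \setminus X^*)$ gives
\[
  s \cdot \tfrac{\alpha np}{2^{\ell+1}} \;<\; e(X \setminus X^*, Y) + 2e(G[X \setminus X^*]) \;\le\; \gamma p|X||Y| + ps^2 + \beta s,
\]
where the last inequality uses $e(X,Y) < \gamma p|X||Y|$ and $(p,\beta)$-sparsity. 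The quadratic $f(s) := s(\alpha np/2^{\ell+1} - ps - \beta) - \gamma p|X||Y|$ is concave with two roots $s_- \ll s_+$ when $\gamma, \eta$ are small (roughly $s_- = O(2^\ell \gamma n/\alpha)$ and $s_+ = \Theta(\alpha n/2^\ell)$); crucially, the same inequality holds at every intermediate step of the one-at-a-time cleanup, so $s(k)$ starts at $0$, increases by $1$ at each step, and cannot cross the forbidden interval $(s_-, s_+)$. Therefore $s \le s_-$, and the same estimate holds for $|Y \setminus Y^*|$. Summing over the at most $1/c$ refinements gives $|V_0| \le O(2^{1/c}\gamma n/\alpha)$, which is $\le \alpha n$ once $\gamma = \gamma(\alpha,c)$ is chosen small enough.

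Second, I need $\ell < 1/c$ at termination. For each current part $V_i$ the minimum degree bound gives $e(G[V_i]) \ge |V_i| \cdot (c + \alpha/2^\ell)np/2$, while $(p,\beta)$-sparsity gives $e(G[V_i]) \le |V_i|^2 p/2 + \beta|V_i|/2$. Combining yields $|V_i| \ge (c + \alpha/2^\ell)n - \beta/p \ge (c + \alpha/2^{\ell+1})n$, provided $\eta$ is chosen sufficiently small in terms of $\alpha$ and $c$. Summing over all parts, $\ell \cdot (c + \alpha/2^{\ell+1})n \le \sum |V_i| \le n$, which forces $\ell < 1/c$. Since each refinement strictly increases the number of parts, the process terminates within $\lceil 1/c \rceil$ iterations, and on termination the partition is $(c,\alpha,\gamma)$-perfect.

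The main obstacle is the cleanup estimate: without the one-at-a-time continuity argument, the quadratic inequality only places $s$ in $[0, s_-) \cup (s_+, \infty)$, and nothing a priori rules out the large-$s$ branch (which would correspond to moving almost all of $V_i$ into $V_0$ and destroying the $|V_0| \le \alpha n$ bound). The continuity of the removal process together with $s(0) = 0$ is what pins $s$ to the small branch, and it is this argument — together with the matching choice of $\gamma$ small enough that $s_-$ and $s_+$ are well separated — that makes the construction work within the budget $|V_0| \le \alpha n$.
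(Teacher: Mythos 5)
Your proof is correct and follows essentially the same approach as the paper: iteratively refine a $(c,\alpha)$-good partition by cleaning up a non-expanding part, and use the one-vertex-at-a-time intermediate-value argument to pin the cleanup size to the small root of the quadratic --- this is precisely the paper's continuity argument, where the set $W_X^j$ grows by one vertex per step and a set of size $\lceil \alpha n/2^{\ell+2}\rceil$ would already contradict $(p,\beta)$-sparsity. The only cosmetic differences are in the cleanup criterion (you remove $v$ whenever $\deg(v,X_k)$ drops below the new threshold $(c+\alpha/2^{\ell+1})np$, which by the old min-degree bound is the same as $\deg(v, V_i\setminus X_k)>\alpha np/2^{\ell+1}$; the paper instead first discards vertices with many neighbours in $Y$ and then iteratively discards vertices with many neighbours in the removed set) and in the termination argument (you lower-bound every part size via the min-degree-versus-sparsity inequality and sum over parts; the paper derives a contradiction at $\ell=\lceil c^{-1}\rceil$ by averaging).
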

\begin{proof}
  Let $G = (V, E)$ and let $\gamma = \alpha^2/2^{2/c + 1}$. Suppose that $\eta =
  \eta(c, \alpha)$ is sufficiently small for the rest of the argument to go
  through. For every $\ell \geq 1$, set
  \[
    c_\ell = c + \alpha/2^{\ell - 1} \geq c, \quad \alpha_\ell = \alpha (2^{-1}
    + 2^{-2} + \cdots + 2^{-\ell}) \leq \alpha, \quad \text{and} \quad
    \gamma_\ell = \alpha^2/2^{2\ell+1}.
  \]
  Note that the partition $V = V_0 \cup V_1$ where $V_0 = \varnothing$ is
  trivially $(c_1, \alpha_1)$-good. In the following we argue that if for some
  $\ell< c^{-1}$ a partition $V = V_0 \cup \dotsb \cup V_\ell$ is $(c_\ell,
  \alpha_\ell)$-good, but not $(c_\ell, \alpha_\ell, \gamma_\ell)$-perfect, then
  there exists a partition $V = W_0 \cup \dotsb \cup W_{\ell + 1}$ that is
  $(c_{\ell + 1}, \alpha_{\ell + 1})$-good. This is sufficient to complete the
  proof, which can be seen as follows. By repeatedly applying the statement
  above, we either obtain a $(c_\ell, \alpha_\ell, \gamma_\ell)$-perfect
  partition $V = V_0 \cup \dotsb \cup V_\ell$ for some $1 \leq \ell < c^{-1}$ or
  we obtain a $(c_\ell, \alpha_\ell)$-good partition $V = V_0 \cup \dotsb \cup
  V_\ell$ for $\ell = \ceil{c^{-1}}$. In the former case, we are done because
  $c_\ell = c + \alpha/2^{\ell-1}$, $\alpha_\ell \leq \alpha$, and $\gamma_\ell
  \geq \gamma$ for $\ell < c^{-1}$. The latter case results in a contradiction,
  as we now verify. By averaging, there is some $i \in [\ell]$ such that $|V_i|
  \leq n/\ell$. Since $\ell \geq c^{-1}$, we then have $|V_i|\leq cn$.
  Furthermore, \ref{L-min-deg} states that $\delta(G[V_i]) \geq (c +
  \alpha/2^{\ell - 1})np \geq (c + \alpha/2^{1/c})np$, which in particular
  implies
  \[
    2e(V_i) \geq |V_i| (c + \alpha/2^{1/c})np.
  \]
  On the other hand, since $G$ is $(p, \beta)$-sparse, we have $2e(V_i) = e(V_i,
  V_i) \leq p|V_i|^2 + \beta |V_i|$, which combines with the above to yield
  \[
    (c + \alpha/2^{1/c})np \leq p|V_i| + \beta.
  \]
  Since $|V_i|\leq cn$ and $\beta \leq \eta np$, this is a contradiction if
  $\eta < \alpha/2^{1/c}$.

  We now prove the statement mentioned above. Assume that the partition $V = V_0
  \cup \dotsb \cup V_\ell$ is $(c_\ell, \alpha_\ell)$-good but not $(c_\ell,
  \alpha_\ell, \gamma_\ell)$-perfect, for some $\ell < c^{-1}$. Then there is
  some $i \in [\ell]$ and a partition $V_i = X \cup Y$ such that
  \begin{equation}\label{eq:non-perfect-tuple}
    e(X, Y) < \gamma_\ell |X||Y|p.
  \end{equation}
  We further define partitions $X = V_X \cup W_X$ and $Y = V_Y \cup W_Y$ as
  follows. Set
  \begin{equation}\label{eq:w0}
    W_X^0 := \{ v \in X : \deg(v, Y) \geq \alpha n p/2^\ell \},
  \end{equation}
  and, for every $j \geq 1$, set
  \begin{equation}\label{eq:wj}
    W_X^j :=
      \begin{cases}
        W_X^{j - 1} \cup \{v\}, & \text{if there exists $v \in X \setminus
        W_X^{j - 1}$ with $\deg(v, W_X^{j - 1}) \geq \alpha np/2^\ell$},
        \\
        W_X^{j - 1}, & \text{otherwise}.
      \end{cases}
  \end{equation}
  Finally, define $W_X := \bigcup_{j \geq 0} W_X^j$ and $V_X := X \setminus
  W_X$. The partition $Y = V_Y \cup W_Y$ is defined analogously (with $X$
  replaced by $Y$).

  We now define the partition $V = W_0 \cup \dotsb \cup W_{\ell + 1}$ by setting
  $W_0 = V_0 \cup W_X \cup W_Y$ and $(W_1, \dotsc, W_{\ell + 1}) = (V_1, \dotsc,
  V_{i - 1}, V_X, V_Y, V_{i + 1}, \dotsc, V_\ell)$, i.e., we obtain the new
  partition of $V$ by replacing the part $V_i$ by the two parts $V_X$ and $V_Y$
  and adding $W_X \cup W_Y$ to $V_0$. It remains to check that this partition is
  $(c_{\ell + 1}, \alpha_{\ell + 1})$-good.

  Showing \ref{L-size-bad} essentially boils down to proving that $W_X$ and
  $W_Y$ are not too large. As a first step, it follows from
  \eqref{eq:non-perfect-tuple} and \eqref{eq:w0} that
  \[
    |W_X^0| \cdot \alpha np/2^\ell \leq e(X, Y) < \gamma_{\ell}|X||Y|p \leq
    \gamma_\ell |V_i|^2p/4,
  \]
  and thus $|W_X^0| \leq 2^{\ell - 2} \gamma_\ell |V_i|/\alpha \leq 2^{\ell
  - 2}\gamma_\ell n/\alpha$. Assume towards a contradiction that $|W_X| > \alpha
  n/2^{\ell + 2}$. Then there must exist some $j \geq 0$ such that $|W_X^j| =
  \ceil{\alpha n/2^{\ell + 2}}$. We remark that by the choice $\gamma_\ell =
  \alpha^2/2^{2\ell + 1}$, we have $|W_X^0| \leq 2^{\ell - 2}\gamma_\ell n/a =
  \alpha n/2^{\ell + 3} \leq |W_X^j|/2$. From the definition of $W_X^j$, we
  moreover see that every vertex in $W_X^j \setminus W_X^0$ adds at least
  $\alpha np/2^\ell$ edges to $e(W_X^j)$. Therefore, we have
  \begin{equation}\label{eq:wj-lower-bound}
    e(W_X^j) \geq \frac{\alpha}{2^\ell} np \cdot |W_X^j \setminus W_X^0| \geq
    \frac{\alpha}{2^{\ell + 1}} np \cdot |W_X^j|.
  \end{equation}
  On the other hand, since $G$ is $(p, \beta)$-sparse,
  \begin{equation}\label{eq:wj-upper-bound}
    e(W_X^j) \leq |W_X^j|^2 p/2 + \beta |W_X^j| = \ceil*{\frac{\alpha n}{2^{\ell
    + 2}}} (|W_X^j| p/2 + \beta).
  \end{equation}
  As $\beta \leq \eta np \leq \eta 2^{\ell + 2}|W_X^j|p/\alpha$, we see that for
  small enough $\eta$, equations \eqref{eq:wj-lower-bound} and
  \eqref{eq:wj-upper-bound} result in a contradiction. It follows that $|W_X|
  \leq \alpha n/2^{\ell + 2}$ and one can show analogously that $|W_Y|\leq
  \alpha n/2^{\ell + 2}$. In conclusion,
  \[
    |W_0| = |V_0| + |W_X| + |W_Y| \leq (\alpha_\ell + \alpha/2^{\ell + 1})n =
    \alpha_{\ell + 1} n,
  \]
  completing the proof of \ref{L-size-bad}.

  Lastly, we prove \ref{L-min-deg}. Since $V = V_0 \cup \dotsb \cup V_\ell$ is
  $(c_\ell, \alpha_\ell)$-good, we have $\delta(G[V_i]) \geq c_\ell np = (c +
  \alpha/2^{\ell - 1})np$. Observe that \eqref{eq:w0} and \eqref{eq:wj} imply
  that $V_X$ contains only vertices with fewer than $\alpha np/2^{\ell}$
  neighbours in $W_X \cup Y = V_i \setminus V_X$. Therefore,
  \[
    \delta(G[V_X]) \geq (c + \alpha/2^{\ell - 1})np - \alpha n p/2^\ell = (c +
    \alpha/2^\ell)np = c_{\ell + 1}np.
  \]
  Similarly, we prove $\delta(G[V_Y]) \geq c_{\ell + 1}np$, thus establishing
  \ref{L-min-deg}.
\end{proof}

We can now prove Lemma \ref{lem:partitioning-lemma} which we restate for
convenience.

\partitioning*

\begin{proof}
  We may assume without loss of generality that $\xi < \alpha$. Let $c' = c +
  \alpha - \xi$, $\alpha' = c^2\xi/4$, and choose $\eta$ to be small enough so
  that the following arguments hold. Then $G$ has minimum degree at least $(c' +
  \alpha')np$, and so we can apply Lemma~\ref{lem:perfect-partition} to $G$ to
  obtain, for some $\gamma' > 0$ and some integer $1 \leq \ell < c^{-1}$, a
  $(c', \alpha', \gamma')$-perfect partition $V(G) = V_0' \cup V_1' \cup \dotsb
  \cup V_\ell'$. In the following, we distribute the vertices in $V_0'$ over the
  other sets to obtain a partition $V(G) = V_1 \cup \dotsb \cup V_\ell$ as in
  the statement of the lemma (in particular, we aim to have $V_i' \subseteq V_i$
  for every $i \in [\ell]$).

  Let $m = |V_0'|$ and note that by \ref{L-size-bad}, we have $m \leq \alpha'
  n$. It then follows from the fact that $G$ has minimum degree at least $(c' +
  \alpha')np \geq |V_0'|p + c'np$ and Lemma~\ref{lem:edges-out-small-set} that
  there exists an ordering $w_1, \dotsc, w_m$ of the vertices of $V_0'$ such
  that for every $j \in [m]$, we have
  \begin{equation}\label{eq:min-degree}
    \deg(w_j, V \setminus \{w_j, \dotsc, w_m\}) \geq c'np - \beta \geq cnp,
  \end{equation}
  where the last inequality holds by choosing $\eta$ to be small enough. We
  process the vertices $w_1, \dotsc, w_m$ in this order, defining $\ell$ chains
  of subsets
  \[
    \varnothing = W_i^0 \subseteq W_i^1 \subseteq \cdots \subseteq W_i^m
    \subseteq V_0' \qquad \text{for $i \in [\ell]$}
  \]
  along the way. For this, we set $W_i^0 = \varnothing$ for every $i \in
  [\ell]$, and for every vertex $w_j$, we do the following:
  \begin{enumerate}[(1)]
    \item\label{i*} choose an arbitrary $i^\star \in [\ell]$ satisfying
      $\deg(w_j, V'_{i^\star} \cup W_{i^\star}^{j - 1}) \geq cnp/\ell$,
    \item\label{Wj} set $W_{i^\star}^j := W_{i^\star}^{j - 1} \cup \{w_j\}$ and
      $W_i^j := W_i^{j - 1}$ for all $i \neq i^\star$.
  \end{enumerate}
  Observe that by \eqref{eq:min-degree} there always exists at least one
  $i^\star \in [\ell]$ as in \ref{i*}. Lastly, we define $V_i := W_i^m \cup
  V_i'$ for every $i \in [\ell]$.

  It is easy to see that $V_1, \dotsc, V_\ell$ contain all vertices of $V_0'$
  and that we have $\delta(G[V_i]) \geq cnp/\ell \geq c^2np$ for every $i \in
  [\ell]$. Moreover, by \ref{L-min-deg}, all but at most $|V_0'| \leq \alpha'n$
  vertices $v$ in each set $V_i$ satisfy $\deg(v, V_i) \geq c'np = (c + \alpha -
  \xi)np$. Since $G$ is $(p, \beta)$-sparse and $\delta(G[V_i]) \geq c^2 np$,
  one easily derives $|V_i| \geq c^2n/2$. Thus, by our choice of constants
  $\alpha' n \leq \xi|V_i|$ and so $\delta_\xi(G[V_i]) \geq (c + \alpha -
  \xi)np$. We finish the proof by showing that each graph $G[V_i]$ is a $\gamma
  p$-expander, with $\gamma = \min{\{ c^2/4, \gamma'/4 \}}$.

  For this, fix some partition $V_i = X \cup Y$ into non-empty sets where,
  without loss of generality, we assume $|X| \leq |Y|$. If $|X| \leq c^2 n/2$,
  then it follows from $\delta(G[V_i]) \geq c^2np \geq |X|p + c^2np/2$ and
  Lemma~\ref{lem:edges-out-small-set} that $e(X, Y) \geq c^2|X|np/2 - \beta|X|
  \geq \gamma |X||Y|p$, for $\eta$ small enough. On the other hand, if $|X|, |Y|
  \geq c^2n/2$, then we use the fact that $G[V_i']$ is a $\gamma' p$-expander to
  get
  \[
    e(X, Y) \geq e(X \cap V_i', Y \cap V_i') \geq \gamma'|X \cap V_i'| |Y \cap
    V_i'|p.
  \]
  The assumption on the sizes of $X$ and $Y$ implies $|X \cap V_i'| = |X
  \setminus V'_0| \geq |X| - \alpha' n \geq |X|/2$ and similarly $|Y \cap V_i'|
  \geq |Y|/2$. This gives
  \[
    e(X, Y) \geq \gamma'|X||Y|p/4 \geq \gamma |X||Y|p,
  \]
  as required.
\end{proof}

\section{Embedding and boosting}\label{sec:embedding-lemma}

In this section we give the proof of the Embedding Lemma
(Lemma~\ref{lem:embedding-lemma}). The proof relies on the following approximate
version covering almost all the vertices of $G$.

\begin{lemma}\label{lem:embedding-cheat}
  For every integer $k \geq 2$ and all $\alpha, \mu > 0$, there exists a
  positive $\eta(\alpha, k)$ such that the following holds for all sufficiently
  large $n$. Let $p \in (0, 1)$ and $\beta \leq \eta np$ and let $G$ be a $(p,
  \beta)$-sparse graph on $n$ vertices such that
  \[
    \delta(G) \geq 2\alpha np \quad \text{and} \quad \delta_{\alpha/4}(G) \geq
    (1/k + \alpha)np.
  \]
  Then $G$ contains a collection of $k - 1$ cycles covering all but at most $\mu
  n$ vertices.
\end{lemma}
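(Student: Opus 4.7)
The plan is to apply the sparse regularity lemma to $G$, pass to a reduced graph $R$ that inherits the essential minimum degree condition of $G$, invoke Theorem~\ref{thm:KL} on $R$ to obtain a cover by $k-1$ cycles, and then lift each such cycle to a cycle of $G$ covering nearly all vertices of the corresponding parts.

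We first choose $\eps, d > 0$ sufficiently small in terms of $\alpha, k, \mu$, and $m = m(k)$ large enough that the ``cycles only'' strengthening of Theorem~\ref{thm:KL} mentioned after its statement applies to any graph on at least $m$ vertices with minimum degree at least $(1/k + \alpha/4) m$. Applying Lemma~\ref{lem:sparse-regularity-lemma} with parameters $\eps$ and $m$ to $G$, we obtain an $(\eps, \tilde p)$-regular partition $V_0 \cup V_1 \cup \dotsb \cup V_t$ with $t \geq m$ and $\tilde p = e(G)/\binom{n}{2} = \Theta(p)$. We then define the reduced graph $R$ on the vertex set $[t]$ by declaring $\{i,j\}$ an edge exactly when $(V_i, V_j)$ is an $(\eps, \tilde p)$-regular pair of density at least $d\tilde p$.

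Using $(p,\beta)$-sparsity to bound the number of edges leaving $V_i$ that fall into either non-regular pairs or regular pairs of density below $d\tilde p$, and combining this with the hypothesis $\delta_{\alpha/4}(G) \geq (1/k + \alpha)np$, an averaging argument shows that all but a $\sqrt{\eps}$-fraction of indices $i \in [t]$ satisfy $\deg_R(i) \geq (1/k + \alpha/2) t$. We move the corresponding exceptional parts into $V_0$ (which grows to size at most $2\sqrt{\eps} n$) and let $R'$ denote the induced subgraph of $R$ on the remaining indices; then $\delta(R') \geq (1/k + \alpha/3) |V(R')|$. The cycles-only form of Theorem~\ref{thm:KL} applied to $R'$ then yields vertex-disjoint cycles $C_1, \dotsc, C_{k-1}$ covering $V(R')$.

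For each cycle $C_j = V_{i_1} V_{i_2} \cdots V_{i_\ell} V_{i_1}$ of $R'$, we construct a cycle of $G$ covering nearly all vertices of $V_{i_1} \cup \dotsb \cup V_{i_\ell}$ by repeatedly traversing $V_{i_1}, \dotsc, V_{i_\ell}$ in cyclic order, at each step using the lower-regularity of the current pair to pick an unused neighbour in the next part. To guarantee that the greedy procedure succeeds until only an $\eps$-fraction of vertices in each part is left, we invoke Lemma~\ref{lem:small-subsets-regular} to certify that the sub-pair induced on the remaining vertices continues to be lower-regular, and we close the resulting long path into a cycle by choosing its starting vertex in $V_{i_1}$ carefully so that the two endpoints are adjacent across the last regular pair. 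The total number of uncovered vertices is then at most $|V_0| + \eps n \leq \mu n$. The main obstacle will be this lifting step: in the sparse setting no blow-up lemma is available, so one must argue by hand that lower-regularity is inherited by the residual sub-pair throughout the greedy construction, and that the endpoints of the long path can still be joined after almost all vertices of the parts have been consumed.
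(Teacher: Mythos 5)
Your approach has a genuine gap in the treatment of the low-degree clusters. You claim that ``all but a $\sqrt{\eps}$-fraction of indices $i\in[t]$ satisfy $\deg_R(i)\geq(1/k+\alpha/2)t$'' and then move the exceptional clusters into $V_0$. This is too optimistic: the hypothesis only gives $\delta_{\alpha/4}(G)\geq(1/k+\alpha)np$, so up to an $\alpha/4$-fraction of the vertices of $G$ may have degree as low as $2\alpha np$, and in the worst case these low-degree vertices concentrate in about $(\alpha/4)t$ clusters. Such a cluster has degree in $R$ bounded below only by $\Theta(\alpha t)$, far short of $(1/k+\alpha/2)t$ when $\alpha$ is small. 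Since this $\Omega(\alpha)t$-many bad clusters do not shrink as $\eps\to 0$, moving them to $V_0$ leaves $\Omega(\alpha)n$ vertices uncovered, which does not beat $\mu n$ for arbitrary $\mu>0$. The losses to non-regular pairs and low-density pairs that you account for are indeed $O(\sqrt{\eps})t$, but the losses coming from the essential-minimum-degree hypothesis itself are of a completely different, $\eps$-independent, scale.

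The paper avoids this by \emph{not} discarding the bad clusters. It lets $U\subseteq V(R)$ be the set of at most $(\alpha/4+\eps)t$ clusters of low reduced degree, uses $\delta(R)\geq\alpha t$ (which does follow from $\delta(G)\geq 2\alpha np$ and $(p,\beta)$-sparsity) to greedily find a matching $M$ in $R$ saturating $U$, and covers $R[W]$ for $W=V(R)\setminus V(M)$ by at most $k-1$ cycles via Theorem~\ref{thm:KL}. Each vertex of $V(M)$ then chooses a private neighbour on one of these cycles, and each matching edge is grafted onto its corresponding cycle as a pendant pair of clusters. In this way the matched clusters, rather than being thrown away, are swept into the cycles, and only an $O(\eps)n$ leftover remains. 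To repair your proof you would need to introduce some such mechanism for reintegrating the low-degree clusters; the rest of your argument (passing to a reduced graph, applying the cycles-only strengthening of Theorem~\ref{thm:KL}, and lifting via a greedy walk through a chain of lower-regular pairs) matches the paper's plan and is standard. One minor further caveat: Lemma~\ref{lem:small-subsets-regular} only guarantees that \emph{most} subsets of the prescribed size inherit lower-regularity, so the greedy path-building inside a regular pair should be done via a counting or reservoir argument (as in the cited~\cite[Lemma~2.3]{ben2012long}) rather than by certifying the residual sub-pair at each step, but this is a routine adjustment.
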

\begin{proof}[Proof sketch]
  Since the argument is fairly standard nowadays, we only give a rough sketch of
  the proof. We apply the sparse regularity lemma
  (Lemma~\ref{lem:sparse-regularity-lemma}) to the graph $G$ with a sufficiently
  small parameter $\eps > 0$. Let $t$ be the number of vertices in the reduced
  graph $R$. Since $G$ is $(p, \beta)$-sparse, straightforward calculations show
  that $\delta(R) \geq \alpha t$ and $\delta_{\alpha/4+\eps}(R) \geq
  (1/k+\alpha-2\eps)t$. Let $U$ be the set containing the at most
  $(\alpha/4+\eps)t$ vertices (clusters) $v \in V(R)$ with $\deg_R(v) <
  (1/k+\alpha-2\eps)t$. A simple greedy strategy allows us to find a matching
  $M$ in $R$ that saturates the set $U$; this matching contains at most
  $2|U|\leq (\alpha/2+2\eps)t$ vertices. Let $W = V(R) \setminus V(M)$. Since
  $R[W]$ has minimum degree at least $(1/k+\alpha-2\eps)t - 2|U| \geq t/k$, it
  can be covered by at most $k - 1$ cycles, by Theorem~\ref{thm:KL}. Moreover,
  the minimum degree of $R$ ensures that we can select a different neighbour in
  $W$ for each of the vertices in $V(M)$.

  Using standard machinery, one can now translate each cycle in the covering of
  $R[W]$, as well as all the matching edges in $M$ that have an endpoint whose
  selected neighbour lies on that cycle, into a single cycle that covers all but
  at most $O(\eps n)$ vertices of the graph $G$. The following figure
  schematically represents one such cycle together with the two edges of $M$
  that have an endpoint whose selected neighbour lies on the cycle.

  \begin{center}
    \begin{tikzpicture}[scale=1.5]
      \coordinate (Ma) at (0.5,1);
      \coordinate (Mb) at (2,1);
      \coordinate (Mu) at (0.5,-1);
      \coordinate (Mv) at (2,-1);
      \draw[line width=0.9cm,color=black!20] (Ma)--(Mb) (Mu)--(Mv);
      \begin{scope}[xshift=5cm]
        \foreach \i in {1,...,6} {
          \coordinate (C\i) at (60*\i-30:1.5cm);
        }
      \end{scope}
      \draw[line width=0.9cm,color=black!20] (Mb)--(C3) (Mv)--(C4);
      \draw[line width=0.9cm,color=black!20] (C6)--(C1) (C1)--(C2) (C2)--(C3)
      (C3)--(C4) (C4)--(C5) (C5)--(C6);
      \foreach \i in {a,b,u,v} {
        \draw[fill=white] (M\i) circle (0.5cm) circle (0.5cm);
      }
      \foreach \i in {1,...,6} {
        \draw[fill=white] (C\i) circle (0.5cm);
      }

      \foreach \i in {1,2,4,5,6} {
        \begin{scope}[xshift=5cm,rotate=60*\i]
          \draw (30:1.5cm) ++(0,-0.2) coordinate (f\i) ++(0.1,0) coordinate (au1)
          ++(0.1,0) coordinate (au2);
          \draw (30:1.5cm) +(-0.2,-0.2) coordinate (h\i);
          \draw (-30:1.5cm) ++(0,0.2) coordinate (g\i) ++(0.1,0) coordinate (au3)
          ++(0.1,0) coordinate (au4);
          \draw[black]
          (f\i) -- (au3) -- (au2) -- (au4) -- (au1) -- (g\i);
        \end{scope}
      }
      \begin{scope}[xshift=5cm,rotate=60*3]
        \draw (30:1.5cm) ++(0,-0.2) coordinate (au2) ++(0.1,0) coordinate (au1)
        ++(0.1,0) coordinate (f3);
        \draw (30:1.5cm) +(-0.2,-0.2) coordinate (h3);
        \draw (-30:1.5cm) ++(0,0.2) coordinate (au4) ++(0.1,0) coordinate (au3)
        ++(0.1,0) coordinate (g3);
        \draw[black]
        (f3) -- (au3) -- (au2) -- (au4) -- (au1) -- (g3);
      \end{scope}
      \draw[black] (Mb) ++(0,-0.3) coordinate (a) {} -- ++(-1.5,0.1) --
      +(1.5,0.1) -- ++(0,0.2) -- +(1.5,0.1) -- ++(0,0.2) -- +(1.5,0.1)
      coordinate (b) {};
      \draw[black] (Mv) ++(0,-0.3) coordinate (u) {} -- ++(-1.5,0.1) --
      +(1.5,0.1) -- ++(0,0.2) -- +(1.5,0.1) -- ++(0,0.2) -- +(1.5,0.1)
      coordinate (v) {};

      \draw[densely dotted] (b)--(f2) (a)--(g3) (v)--(f3) (u)--(g4);
      \draw[densely dotted] (f1)--(h2)--(g2) (f4)--(h5)--(g5)
      (f5)--(h6)--(g6) (f6)--(h1)--(g1);
    \end{tikzpicture}
  \end{center}

  The long black paths---each of which covers all but $O(\eps n/t)$ vertices in
  both of the clusters it belongs to---are embedded first into the dense regular
  pairs, using, e.g., the method from the proof
  of~\cite[Lemma~2.3]{ben2012long}. Finally, using the definition of an $(\eps,
  p)$-regular pair, the dotted edges can be added by sacrificing at most $O(\eps
  n/t)$ vertices from each black path. In this way, one obtains a collection of
  $k-1$ cycles in $G$ covering all but $O(\eps n) \leq \mu n$ vertices.
\end{proof}

Utilising a trick of Nenadov and the second author~\cite{nenadov2020komlos}, as
a consequence of Lemma~\ref{lem:embedding-cheat} we get
Lemma~\ref{lem:embedding-lemma}.

\embedding*

\begin{proof}
  Let $C = C(\alpha, k) > 0$ be a sufficiently large constant for the arguments
  below to go through, $\eta = \eta_{\ref{lem:embedding-cheat}}(\alpha/4, k)$,
  and $\mu = \min{\{1/2, \alpha/48, \eta/8, 1/(4C) \}}$.

  Take $m$ to be the largest integer such that $n/2^{m - 1} > \max{\{
  \floor{\eta^{-1}\beta/p}, \floor{C\log n/p} \}}$ and note that $m \leq \log_2
  n$. We first claim that there exists a partition $V(G) = L_1 \cup \dotsb \cup
  L_m$ such that:
  \begin{enumerate}[(i), font=\itshape]
    \item\label{boost1} $|L_i| = \floor{n/2^i}$ for all $i \in [m - 1]$,
    \item\label{boost2} $\deg(v, L_i) \geq \alpha|L_i|p$ for all $v \in V(G)$
      and $i \in [m]$, and
    \item\label{boost3} for every $i, j \in [m]$, we have $\deg(v, L_i) \geq
      (1/k + \alpha/2)|L_i|p$ for all but at most $(\alpha/24)|L_j|$ vertices $v
      \in L_j$.
  \end{enumerate}
  Indeed, a partition $V(G) = L_1 \cup \dotsb \cup L_m$ chosen uniformly at
  random among all partitions such that $|L_i| = \floor{n/2^i}$ for all $i \in
  [m - 1]$ has these properties with high probability. We briefly explain how
  one can conclude this. Observe that
  \begin{equation}\label{eq:leftover-lb}
    |L_m| = n - \sum_{i = 1}^{m - 1} \floor{n/2^i} \geq n - n \sum_{i = 1}^{m -
    1} 1/2^i = n/2^{m - 1},
  \end{equation}
  and thus $|L_i| \geq C\log n/p$ for all $i \in [m]$. For every fixed $v \in
  V(G)$ and $i \in [m]$, the random variable $\deg(v, L_i)$ follows a
  hypergeometric distribution with mean at least $2\alpha|L_i|p \geq 2\alpha
  C\log n$, so by Chernoff's inequality, we have $\Pr[\deg(v, L_i) \leq
  \alpha|L_i|p] \leq n^{-2}$. Thus, the union bound shows that $\ref{boost2}$
  holds with high probability for every $i \in [m]$ and $v \in V(G)$. The
  statement in $\ref{boost3}$ follows similarly.

  Fix a partition $V(G) = L_1 \cup \dotsb \cup L_m$ satisfying
  $\ref{boost1}$--$\ref{boost3}$. We show by induction that for every $i \in
  [m]$, the graph $G[L_1 \cup \dotsb \cup L_i]$ contains $k - 1$ path forests
  with at most $i$ paths each, which together cover all but at most $\mu|L_i|$
  vertices in $L_1 \cup \dotsb \cup L_i$. By maximality of $m$, we have
  \[
    |L_m| \leq n/2^{m - 1} + m \leq 2\max{\{ \eta^{-1} \beta/p, C\log n/p \}} +
    \log_2 n \leq 4\max{\{ \eta^{-1} \beta/p, C\log n/p \}}.
  \]
  Hence, for $i = m$, adding at most $\mu|L_m|$ uncovered vertices to one of the
  path forests used to cover $G[L_1 \cup \dotsb \cup L_m]$ results in a cover of
  $G$ by $k - 1$ path forests, each of which contains at most $(k - 1)m + \mu
  \cdot 4\max{\{ \eta^{-1}\beta/p, C\log n/p \}} \leq \max{\{ \beta/p, \log n/p
  \}}$ paths, for our choice of $\mu$.

  For the base case $i = 1$, this is a consequence of
  Lemma~\ref{lem:embedding-cheat} applied with $\alpha/2$ (as $\alpha$) and
  $\mu$ to $G[L_1]$. We can indeed do so since $\beta \leq \eta|L_1|p$.

  Assume now the statement holds for some $1 \leq i < m$ and let us show it for
  $i + 1$. Denote by $W$ the vertices not covered by the $k - 1$ path forests in
  $G[L_1 \cup \dotsb \cup L_i]$ and note that $|W| \leq \mu|L_i| \leq 2\mu|L_{i
  + 1}|$. For every $v \in V(G)$ we have
  \[
    \deg_G(v, L_{i+1} \cup W) \geq \deg_G(v, L_{i+1}) \osref{$\ref{boost2}$}\geq
    \alpha|L_{i+1}|p \geq \alpha\frac{|L_{i+1}| + |W|}{1 + 4\mu} p \geq
    (\alpha/2)|L_{i+1} \cup W|p.
  \]
  Similar calculation shows that every vertex $v \in V(G)$ with $\deg_G(v,
  L_{i+1}) \geq (1/k + \alpha/2)|L_{i+1}|p$ also satisfies $\deg_G(v, L_{i+1}
  \cup W) \geq (1/k + \alpha/4)|L_{i+1} \cup W|p$. Since there were at most
  $(\alpha/24)|L_{i+1}|$ vertices in $L_{i+1}$ violating the previous degree
  condition by $\ref{boost3}$, and $|W| \leq \mu|L_{i+1}|$, there are at most
  $(\alpha/16)|L_{i+1}|$ vertices $v \in L_{i+1} \cup W$ with $\deg_G(v, L_{i+1}
  \cup W) < (1/k + \alpha/4)|L_{i+1} \cup W|p$. Therefore, another application
  of Lemma~\ref{lem:embedding-cheat} for $\alpha/4$ (as $\alpha$) to $G[L_{i+1}
  \cup W]$ shows that the hypothesis holds for $i+1$. Once again, we may apply
  the lemma as $\beta \leq \eta|L_{i+1} \cup W|p$ by \eqref{eq:leftover-lb}.
\end{proof}

\subsection*{Acknowledgements.} The authors would like to thank the anonymous
reviewers for their thorough reading of the paper and valuable comments; in
particular, for pointing out a slight gap in a previous version of the paper.

\bibliographystyle{abbrv} \bibliography{references}

\end{document}